\newtheorem{thm}{Theorem}[section]
\newtheorem{cor}[thm]{Corollary}
\newtheorem{defn}{Definition}
\newtheorem{lem}[thm]{Lemma}
\newtheorem{prop}[thm]{Proposition}
\newtheorem{claim}[thm]{Claim}
\newtheorem{quest}{Question}
\newtheorem{prob}{Problem}
\newcommand{\R}{\mathbb{R}}
\renewcommand{\t}{\text}
\renewcommand{\l}{\left}
\renewcommand{\r}{\right}
\newcommand{\ex}{\mathrm{ex}}
\renewcommand{\SS}[1]{\textcolor{red}{#1}}
\renewcommand{\c}[1]{\mathcal{#1}}
\newcommand{\sub}{\subseteq}
\newcommand{\Om}{\Omega}
\newcommand{\sm}{\setminus}
\newcommand{\f}[2]{\frac{#1}{#2}}
\newcommand{\rec}[1]{\frac{1}{#1}}
\newcommand{\sig}{\sigma}
\newcommand{\Del}{\Delta}
\newcommand{\del}{\delta}
\newcommand{\half}{\frac{1}{2}}
\newcommand{\floor}[1]{\lfloor #1\rfloor}
\newcommand{\proj}{\mathrm{proj}}
\title{Random Tur\'an Problems for $K_{s,t}$ Expansions}
\author{Jiaxi Nie\footnote{School of Mathematics, Georgia Institute of Technology {\tt jnie47@gatech.edu}.}\and Sam Spiro\footnote{Dept.\ of Mathematics, Rutgers University {\tt sas703@scarletmail.rutgers.edu}. This material is based upon work supported by the National Science Foundation Mathematical Sciences Postdoctoral Research Fellowship under Grant No. DMS-2202730.}}
\date{\today}
\begin{document}
	\maketitle
	
	\begin{abstract}
		Let $K_{s,t}^{(r)}$ denote the $r$-uniform hypergraph obtained from the graph $K_{s,t}$ by inserting $r-2$ new vertices inside each edge of $K_{s,t}$.  We prove essentially tight bounds on the size of a largest $K_{s,t}^{(r)}$-subgraph of the random $r$-uniform hypergraph $G_{n,p}^r$ whenever $r\ge 2s/3+2$, giving the first random Tur\'an results for expansions that go beyond a natural ``tight-tree barrier.''  In addition to this, our methods yield optimal supersaturation results for $K_{s,t}^{(3)}$ for sufficiently dense host hypergraphs, which may be of independent interest.
	\end{abstract}
	
	\section{Introduction}
	Given an $r$-uniform hypergraph $F$ (or $r$-graph for short), we define the \textit{Tur\'an number} $\ex(n,F)$ to be the maximum number of edges that an $n$-vertex $F$-free $r$-graph can have.  Let $G_{n,p}^r$ denote the random $r$-graph on $n$ vertices obtained by including each possible edge independently and with probability $p$, and when $r=2$ we simply write $G_{n,p}$ instead of $G_{n,p}^2$.  We define the \textit{random Tur\'an number} $\ex(G_{n,p}^r,F)$ to be the maximum number of edges in an $F$-free subgraph of $G_{n,p}^r$.  Note that when $p=1$ we have $\ex(G_{n,1}^r,F)=\ex(n,F)$, so the random Tur\'an number can be viewed as a probabilistic analog of the classical Tur\'an number.
	
	The asymptotics for $\ex(G_{n,p}^r,F)$ are essentially known if $F$ is not an $r$-partite $r$-graph due to independent breakthrough work of Conlon and Gowers \cite{conlon2016combinatorial} and of Schacht \cite{schacht2016extremal}, and because of this we focus only on the degenerate case when $F$ is $r$-partite.  This degenerate case seems to be very difficult even in the graph setting $r=2$ where effective upper bounds are known only  for even cycles \cite{morris2016number,jiang2024balanced}, complete bipartite graphs~\cite{morris2016number}, and theta graphs~\cite{mckinley2023random}; with effective lower bounds known only for powers of rooted trees~\cite{spiro2024random}.  All of these bounds agree with a recent conjecture of McKinely and Spiro~\cite{mckinley2023random}, which gives a general prediction for how the function $\ex(G_{n,p},F)$ should behave for all bipartite $F$.
	
	Much less is known for $r$-graphs with $r\ge 3$, which is partially due to a surprising connection between the random Tur\'an problem and Sidorenko's conjecture~\cite{nie2023sidorenko}.  The few examples of hypergraphs that we do understand tend to be hypergraphs which ``look like'' graphs which we already know how to solve the random Tur\'an problem for, such as for  complete $r$-partite $r$-graphs~\cite{spiro2022counting} and loose even cycles~\cite{jiang2024number,mubayi2023random,nie2024turan}.  One broad class of $r$-graphs in this direction for which recent success has been found is the following.
	\begin{defn}
		Given an $r_0$-graph $F$, we define its \textit{$r$-expansion} $F^{(r)}$ to be the $r$-graph obtained by inserting $r-r_0$ distinct new vertices into each edge of $F$.
	\end{defn}
	Hypergraph expansions are a natural and well-studied class of hypergraphs, especially in the context of (classical) Tur\'an numbers.  For example, the famed Erd\H{o}s matching conjecture~\cite{erdos1965problem} is equivalent to determining $\ex(n,M_k^{(r)})$ when $M_k$ is a graph matching of size $k$.  There are many more results for Tur\'an numbers of expansions \cite{chung1983unavoidable,kostochka2015turan,mubayi2007intersection,pikhurko2013exact}, and we refer the interested reader to the survey by Mubayi and Verstra\"ete~\cite{mubayi2016survey} for much more on (classical) Tur\'an problems for expansions.
	
	In \cite{nie2024random} we initiated the systematic study of random Tur\'an numbers of expansions by proving a set of ``lifting'' results which informally says that one can often  take bounds for the random Tur\'an problem of an $r_0$-graph $F$ and ``lift'' these to bounds for the random Tur\'an problem of  expansions $F^{(r)}$. These results  build on and generalize earlier works of Mubayi and Yepremyan~\cite{mubayi2023random} and Nie~\cite{nie2024turan}, culminating in the following result which solves the random Tur\'an problem for large expansions of strictly balanced Sidorenko hypergraphs\footnote{We omit defining what it means for a hypergraph to be strictly balanced or Sidorenko, as their only relevance for the present paper is the fact that complete bipartite graphs are both strictly balanced and Sidorenko.}.  For this, we say that a sequence of events $A_n$ holds \textit{asymptotically almost surely} or \textit{a.a.s.}\ for short if $\Pr[A_n]\to 1$ as $n\to \infty$, and we write $f(n)\ll g(n)$ if $f(n)/g(n)\to 0$ as $n\to \infty$. 
	
	\begin{thm}[\cite{nie2024random}]\label{thm:strongExpansion}
		Let $F$ be a strictly $r_0$-balanced $r_0$-graph with $2\le \Del(F)<|F|$.  If $F$ is Sidorenko, then for all $r> |F|^2v(F)r_0$, we have a.a.s.
		$$
		\ex(G^r_{n,p}, F^{(r)})=
		\l\{
		\begin{aligned}
			& \max\l\{\Theta(pn^{r-1}),n^{r_0-\frac{v(F)-r_0}{|F|-1}}(\log n)^{\Theta(1)}\r\},~~&\t{if}~p\gg n^{-r+r_0-\frac{v(F)-r_0}{|F|-1}}\\
			&(1+o(1))p\binom{n}{r},~~&\t{if}~ n^{-r}\ll p\ll  n^{-r+r_0-\frac{v(F)-r_0}{|F|-1}}.\\
		\end{aligned}
		\r. $$
		Moreover, if there does not exist any $k$ such that $F^{(k)}$ is Sidorenko, then these bounds fail to hold for all $r$.
	\end{thm}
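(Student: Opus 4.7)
The plan is to combine a balanced supersaturation result for $F^{(r)}$ (itself proved by lifting Sidorenko-based supersaturation for $F$) with the hypergraph container method for the upper bound, and to use classical extremal constructions together with a starlike construction for the lower bound.  Throughout, set $p^*:=n^{-r+r_0-(v(F)-r_0)/(|F|-1)}$, the threshold probability at which $G^r_{n,p}$ first contains many copies of $F^{(r)}$ in expectation.

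For the upper bound when $p\gg p^*$, I would first establish a balanced supersaturation lemma for $F^{(r)}$: every $r$-graph $H$ on $[n]$ with $e(H)\gg \ex(n,F^{(r)})$ admits a collection of copies of $F^{(r)}$ distributed approximately uniformly across its edges.  To prove this I would invoke the Sidorenko property for $F$, which together with strict $r_0$-balance yields balanced supersaturation for $F$ in $r_0$-graphs by a Jensen-type counting argument, and then lift this to $F^{(r)}$ by extending each $r_0$-edge of each copy of $F$ to an $r$-edge of $H$ via codegree averaging; the hypothesis $r>|F|^2v(F)r_0$ guarantees that the extensions can be chosen in a balanced and disjoint manner.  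Feeding this into the hypergraph container theorem produces a small family $\C$ of container $r$-graphs such that every $F^{(r)}$-free $r$-graph on $[n]$ is contained in some $C\in\C$ with $e(C)$ bounded by the claimed expression; a union bound over $\C$ together with Chernoff concentration of $|E(G^r_{n,p})\cap E(C)|$ then yields the a.a.s.\ upper bound.

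For the lower bound when $p\gg p^*$, I would give two constructions.  First, the $\Omega(pn^{r-1})$ piece comes from taking all edges of $G^r_{n,p}$ through a fixed vertex $v$; this is $F^{(r)}$-free because $\Del(F^{(r)})=\Del(F)<|F|$ prevents all edges of any copy of $F^{(r)}$ from sharing a common vertex.  Second, the $n^{r_0-(v(F)-r_0)/(|F|-1)}(\log n)^{\Theta(1)}$ piece is built by taking a classical $F$-free $r_0$-graph $G$ of size $\Theta(n^{r_0-(v(F)-r_0)/(|F|-1)})$ (which exists by Sidorenko-based lower-bound constructions such as random algebraic or norm graphs) and extending each of its edges to an $r$-edge lying in $G^r_{n,p}$ with pairwise-disjoint expansion vertices via a dyadic/random bucketing scheme, where the $(\log n)^{\Theta(1)}$ factor absorbs the cost of simultaneously ensuring extensions are present in $G^r_{n,p}$ and pairwise disjoint.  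In the sparse regime $n^{-r}\ll p\ll p^*$, the expected number of copies of $F^{(r)}$ in $G^r_{n,p}$ is $o(p\binom{n}{r})$, so Markov's inequality plus one-edge deletion per copy produces an $F^{(r)}$-free subgraph of size $(1-o(1))p\binom{n}{r}$, matching the trivial upper bound $|E(G^r_{n,p})|$.

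For the ``moreover'' direction, if no $F^{(k)}$ is Sidorenko, then the framework of \cite{nie2023sidorenko} supplies, for every $r$, weighted $r$-graph configurations on $[n]$ with many edges but anomalously few copies of $F^{(r)}$; intersecting a suitable such configuration with $G^r_{n,p}$ yields an $F^{(r)}$-free subgraph exceeding the main bound.  The hardest step throughout is the balanced supersaturation for $F^{(r)}$ with the correct constants and log-factor, which is precisely what dictates the $(\log n)^{\Theta(1)}$ exponent; the condition $r>|F|^2v(F)r_0$ enters here because we need enormous slack in each codegree to extend $F$ to $F^{(r)}$ in a balanced way while simultaneously avoiding the $O(v(F)+|F|(r-r_0))$ previously-used vertices incurred by the other $|F|-1$ edges.
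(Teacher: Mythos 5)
Your upper-bound plan (prove balanced supersaturation for $F$ from the Sidorenko property and strict balancedness, lift it to $F^{(r)}$ using the codegree slack that $r>|F|^2v(F)r_0$ provides, then run hypergraph containers and a union bound) is indeed the strategy of \cite{nie2024random}, and the star construction for the $\Omega(pn^{r-1})$ term plus the deletion argument in the sparse regime $n^{-r}\ll p\ll p^*$ are fine.

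The construction you give for the second lower-bound term $n^{r_0-\frac{v(F)-r_0}{|F|-1}}(\log n)^{\Theta(1)}$ has a genuine gap. You start from an $F$-free $r_0$-graph $G$ with $\Theta\bigl(n^{r_0-\frac{v(F)-r_0}{|F|-1}}\bigr)$ edges and try to extend each edge of $G$ to an $r$-edge inside $G_{n,p}^r$. But a fixed $r_0$-set lies in only about $pn^{r-r_0}$ edges of $G_{n,p}^r$ in expectation, and at the threshold $p^*=n^{-r+r_0-\frac{v(F)-r_0}{|F|-1}}$ this is $n^{-\frac{v(F)-r_0}{|F|-1}}=o(1)$. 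So for $p$ close to $p^*$ (exactly the range where this term dominates), almost no edge of $G$ has any extension available in $G_{n,p}^r$; the loss is polynomial in $n$, not polylogarithmic, and cannot be absorbed into a $(\log n)^{\Theta(1)}$ factor, nor does taking $G$ with more than $n^{r_0-1/d_{r_0}(F)}$ edges help (the gain in $e(G)$ is always smaller than the $n^{-\frac{v(F)-r_0}{|F|-1)}}$ loss from extension). The correct route is a direct random deletion argument on $G^r_{n,p}$: sparsify to edge density about $p^*$, note that $p^*$ is exactly calibrated so that the expected number of copies of $F^{(r)}$ in $G^r_{n,p^*}$ is comparable to the number of edges (using that $F^{(r)}$ inherits balancedness from $F$, so $r-1/d_r(F^{(r)})=r_0-\frac{v(F)-r_0}{|F|-1}$), and delete one edge from each copy; this is the content of \Cref{lem:generalLowerRandomTuran} (Proposition 2.5 of \cite{nie2024random}) and is how the lower bound is actually obtained.
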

	
	We proved \Cref{thm:strongExpansion} by ``lifting'' a general random Tur\'an bound for Sidorenko hypergraphs due to Jiang and Longbrake~\cite{jiang2024balanced}.  By lifting more specialized results, one can often obtain better dependencies on $r$ than what is given in \Cref{thm:strongExpansion} for specific choices of $F$.  In particular, applying our general lifting results together with the known (tight) bounds for the random Tur\'an number of complete bipartite graphs due to Morris and Saxton~\cite{morris2016number} gives the following, establishing tight bounds for $K_{s,t}^{(r)}$ whenever $r\ge s+2$.

	\begin{thm}[\cite{nie2024random}]\label{thm:OldKst}
		If $r,s,t\ge 2$ are integers such that $t\ge s$ and either
		\begin{itemize}
			\item $r\ge s+2$, or
			\item $r\ge s+1$ and $t\ge s^2-2s+3$,
		\end{itemize}
		
		then for all $0<p=p(n)\le 1$ we have a.a.s.
		$$
		\ex(G^r_{n,p}, K_{s,t}^{(r)})=
		\l\{
		\begin{aligned}
			& \max\{\Theta(pn^{r-1}),n^{2-\frac{s+t-2}{st-1}}(\log n)^{\Theta(1)}\},~~&\t{if}~p\gg n^{-r+2-\frac{s+t-2}{st-1}}\\
			&(1+o(1))p\binom{n}{r},~~&\t{if}~ n^{-r}\ll p\ll  n^{-r+2-\frac{s+t-2}{st-1}}.\\
		\end{aligned}
		\r.
		$$
	\end{thm}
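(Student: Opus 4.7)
The plan is to obtain the result as a direct application of the general lifting framework developed in the authors' earlier work~\cite{nie2024random}, combined with Morris and Saxton's sharp random Tur\'an bounds for complete bipartite graphs~\cite{morris2016number}. Because $K_{s,t}$ is bipartite it satisfies Sidorenko's property, and when $t \geq s$ it is strictly $2$-balanced with maximum degree $t < st = |K_{s,t}|$, so the hypotheses of \Cref{thm:strongExpansion} are met; however, that theorem only yields the conclusion for $r$ polynomially large in $s$ and $t$. To push the threshold down to $r \geq s+2$ (or $r \geq s+1$ with $t \geq s^2-2s+3$), I would invoke the finer, $K_{s,t}$-specific lifting lemmas from~\cite{nie2024random} rather than the generic Sidorenko one.

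The lower bound has two parts. The term $\Theta(pn^{r-1})$ is achieved a.a.s.\ by taking every edge of $G_{n,p}^r$ through a fixed vertex~$v$, which contains no $K_{s,t}^{(r)}$ for $r\ge 3$ since no two edges of an expansion share an ``extension'' vertex. The term $n^{2-(s+t-2)/(st-1)}(\log n)^{\Theta(1)}$ is obtained by the standard lift of the Morris--Saxton random $K_{s,t}$-free construction: take a sparse $K_{s,t}$-free graph $G$ on part of the vertex set and, for each $\{x,y\}\in E(G)$, attach $r-2$ fresh vertices drawn from a disjoint reservoir to obtain an $r$-set, ensuring that the resulting $r$-graph lies inside $G_{n,p}^r$.

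For the upper bound, the strategy is to reduce a hypothetical $K_{s,t}^{(r)}$-free subgraph $H\subseteq G_{n,p}^r$ to a question about its weighted $2$-shadow: form the graph $\partial H$ on $V(G_{n,p}^r)$ whose edges are pairs of vertices with large codegree in $H$, and prove a supersaturation statement to the effect that a copy of $K_{s,t}$ in $\partial H$ can always be extended to a copy of $K_{s,t}^{(r)}$ in $H$. This permits us to apply the Morris--Saxton upper bound to $\partial H$ and pull the bound back to $H$. The restrictions $r\geq s+2$, respectively $r\geq s+1$ together with $t\geq s^2-2s+3$, are exactly what is needed so that, when a copy of $K_{s,t}$ in $\partial H$ is lifted to a copy of $K_{s,t}^{(r)}$ in $H$, there are enough ``internal'' slots per edge to greedily choose the $st(r-2)$ internal vertices distinct from one another and from the $s+t$ core vertices.

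The main obstacle is this supersaturation/embedding step, and the numerology of the embedding is what determines the stated threshold on $r$. Under $r\geq s+2$ each expanded edge provides $\geq s$ internal slots, which is enough to avoid conflicts with the $s$ vertices already chosen on the opposite side via a direct greedy argument; under the alternative hypothesis $r\geq s+1$ with $t$ sufficiently large, the loss of one slot per edge must be compensated by the extra multiplicity coming from the large number of edges of $K_{s,t}$, allowing a cruder union-bound argument to close. Verifying that either regime is sufficient to make the $K_{s,t}$-specific lifting lemmas of~\cite{nie2024random} apply on top of the Morris--Saxton bounds is the main technical check.
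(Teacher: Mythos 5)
You correctly identify the high-level strategy (lift Morris--Saxton's balanced supersaturation for $K_{s,t}$ to the expansion $K_{s,t}^{(r)}$ via the general framework of~\cite{nie2024random}), and the lower-bound constructions you describe are the standard ones. However, your explanation of where the thresholds $r\ge s+2$ and $r\ge s+1$ come from is incorrect, and this points to a real gap in the plan.

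You locate the constraint in the greedy extension step (II), arguing that $r\ge s+2$ is needed to have ``enough internal slots'' when extending each edge of a $K_{s,t}$ in $\partial^2 H$ to an edge of $H$. That step in fact works for any $r\ge 3$ once you have a bounded-codegree collection of $K_{s,t}$'s in the shadow; the number of fresh vertices per edge is a lower-order issue handled by a routine loss of constant factors. The genuine obstruction is in step (I), and it arises from a case distinction you omit: when $\partial^2 H$ has many edges, Morris--Saxton applies directly, but when $\partial^2 H$ is \emph{sparse}, the codegree condition in Morris--Saxton fails and you cannot pull the bound back. In that regime the argument in~\cite{nie2024random} instead locates many $s$-sets $\{u_1,\dots,u_s\}\subseteq V(H)$ each contained in many edges of $H$, and builds the $K_{s,t}$'s from those; this is precisely where $r\ge s+1$ is essential, because for $r\le s$ an $s$-set lies in at most one edge of $H$ and the whole approach collapses. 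The secondary hypothesis $t\ge s^2-2s+3$ is what lets $r=s+1$ suffice rather than $r=s+2$, again through the sparse-shadow counting, not through slot-availability in the extension. A proof that does not handle the sparse $\partial^2 H$ case separately, and that tries to get by with Morris--Saxton alone plus greedy extension, will not go through.
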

	The difference between $r\ge s+2$ and $r\ge s+1$ here is of course marginal for large $s$, but in the case $s=2$, this stronger $s+1$ bound is needed to give a positive answer to the following natural followup question to \Cref{thm:strongExpansion}.
	
	\begin{quest}\label{prob:Sidorenko}
		Which Sidorenko $r_0$-graphs $F$ are such that the tight bounds of \Cref{thm:strongExpansion} hold for every $r$ with $\ex(n,F^{(r)})=\Theta(n^{r-1})$?
	\end{quest}
	
	We note that the hypothesis $\ex(n,F^{(r)})=\Theta(n^{r-1})$ is necessary for the bounds of \Cref{thm:strongExpansion} to hold, as otherwise the stated bound for $p=1$ would be incorrect.  
	
	\Cref{thm:strongExpansion} shows that large expansions of Sidorenko hypergraphs trivially give a positive answer to \Cref{prob:Sidorenko}.  The only known non-trivial examples of positive answers towards \Cref{prob:Sidorenko} are certain classes of hypergraph trees~\cite[Theorem 1.5]{nie2023random}, as well as the graphs $K_{2,t}$ with $t\ge 3$ 
	(as exhibited by \Cref{thm:OldKst}) and for theta graphs with sufficiently many paths \cite{nie2024random}, with no negative results currently being known for this question.  
	
	We focus on \Cref{prob:Sidorenko} for complete bipartite graphs, and more generally on trying to improve the dependency on $r$ from \Cref{thm:OldKst} in this case.  Towards this end, we note that known methods can be used to show $\ex(n,K_{s,t}^{(r)})=\Theta(n^{r-1})$ whenever ${r\choose 2}\ge s$ \cite{kostochka2015turan} with these bounds no longer holding for smaller $r$ and sufficiently large $t$ \cite[Theorem 1.4]{ma2018some}.  As such, we would ideally like to extend the range $r\ge s+1$ from \Cref{thm:OldKst} to something like $r\ge \sqrt{2s}+1$.  
	
	As we will discuss in greater depth in \Cref{sub:sketch} below, there is a real technical  barrier to proving tight bounds for $\ex(G_{n,p}^r,K_{s,t}^{(r)})$ when $r\le s$ (let alone for $r\approx \sqrt{2s}$), as in the range $r\le s$ one can no longer use the tight-tree approach initiated by the first author~\cite{nie2023random} which builds upon ideas from Balogh, Narayanan and Skokan~\cite{balogh2019number}.
	
	The main result of this paper is to go beyond the troublesome tight-tree barrier mentioned above and extend \Cref{thm:OldKst} from roughly $r\ge s$ to $r\ge 2s/3$ as follows
	\begin{thm}\label{thm:Kst}
		If $r,s,t\ge 3$ are integers such that $t\ge s$ and either
		\begin{itemize}
			\item $r\ge 2s/3+2$, or
			\item $r\ge \lfloor 2s/3+2\rfloor$ and $t\ge 8s/3$,
		\end{itemize}
		then for all $0<p=p(n)\le 1$ we have a.a.s.
		$$
		\ex(G^r_{n,p}, K_{s,t}^{(r)})=
		\l\{
		\begin{aligned}
			& \max\l\{pn^{r-1},n^{2-\frac{s+t-2}{st-1}}\r\}(\log n)^{\Theta(1)},~~&\t{if}~p\gg n^{-r+2-\frac{s+t-2}{st-1}}\\
			&(1+o(1))p\binom{n}{r},~~&\t{if}~ n^{-r}\ll p\ll  n^{-r+2-\frac{s+t-2}{st-1}}.\\
		\end{aligned}
		\r.
		$$
	\end{thm}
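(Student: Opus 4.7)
The proof divides into lower and upper bounds, with the upper bound constituting the bulk of the new work. For the lower bound in the regime $p \gg n^{-r+2-\f{s+t-2}{st-1}}$, I would combine two standard constructions inside $G^r_{n,p}$: the star at a fixed vertex $v$, restricted to $G^r_{n,p}$, is $K_{s,t}^{(r)}$-free (since no vertex of $K_{s,t}^{(r)}$ has degree $st$) and has $(1+o(1))p\binom{n-1}{r-1}=\Theta(pn^{r-1})$ edges a.a.s.\ by Chernoff; meanwhile a random deletion argument adapted from Morris--Saxton~\cite{morris2016number} produces a $K_{s,t}^{(r)}$-free sub-$r$-graph of $G^r_{n,p}$ of size $n^{2-\f{s+t-2}{st-1}}(\log n)^{\Om(1)}$ a.a.s. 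When $p \ll n^{-r+2-\f{s+t-2}{st-1}}$, a first-moment computation on $K_{s,t}^{(r)}$-copies shows $G^r_{n,p}$ itself is $K_{s,t}^{(r)}$-free a.a.s., so $(1+o(1))p\binom{n}{r}$ is attained by the full edge set.

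For the upper bound, I would follow the balanced-supersaturation-plus-containers paradigm. Set $M := \max\{pn^{r-1}, n^{2-\f{s+t-2}{st-1}}\}$. The key ingredient is a balanced supersaturation statement of roughly the following shape: there exist constants $C,\be>0$ such that every $r$-graph $H$ on $n$ vertices with $m := e(H) \ge C M$ carries a family $\F$ of $K_{s,t}^{(r)}$-copies in $H$ with $|\F| \ge \be\, m(m/M)^{st-1}$ and \emph{balanced codegrees}, meaning that for every $1 \le \ell \le st-1$ no $\ell$-tuple of edges of $H$ is contained in more than $\be^{-1}(m/M)^{st-1-\ell}$ members of $\F$. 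Feeding this into the hypergraph container theorem yields a family of $2^{o(M\log n)}$ containers, each of size at most $(1+o(1))M$, covering all $K_{s,t}^{(r)}$-free $r$-graphs on $[n]$; a Chernoff estimate and a union bound over containers then gives the desired upper bound on $\ex(G^r_{n,p}, K_{s,t}^{(r)})$ a.a.s.

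The main obstacle will be establishing the balanced supersaturation when $r \le s$, where the tight-tree arguments of~\cite{nie2023random, nie2024random} are unavailable. My plan is to first settle the $3$-uniform case by exploiting link-graph structure: if $H$ is a sufficiently dense $3$-graph, averaging shows that many vertices $v$ have link graphs $L_v$ with $e(L_v) \gg n^{2-\f{s+t-2}{st-1}}$, and the classical balanced supersaturation for $K_{s,t}$ in graphs produces many balanced $K_{s,t}$-copies inside each such $L_v$; each extends through $v$ to a $K_{s,t}^{(3)}$-copy in $H$, and careful double counting across different $v$ yields the required balanced estimate for $H$. I would then bootstrap from $r=3$ to general $r$ via a variant of the expansion-lifting framework of~\cite{nie2024random}. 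The arithmetic of combining the $3$-uniform link estimate with the lifting step accounts for the quantitative threshold $r \ge 2s/3 + 2$: smaller $r$ fails to preserve the codegree bounds required by the container theorem, and the improved threshold with $t \ge 8s/3$ in the second bullet should reflect a slightly tighter link-degree average available when $t$ is sufficiently large relative to $s$.
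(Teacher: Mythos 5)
Your high-level outline (lower bounds from standard constructions, upper bound via balanced supersaturation plus hypergraph containers, with the new work concentrated in a $3$-uniform balanced supersaturation that is then lifted to general $r$) matches the paper's architecture. However, the concrete mechanism you propose for the $3$-uniform balanced supersaturation contains a fatal error, and it is precisely this step that constitutes the technical heart of the theorem.

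You propose to find many $K_{s,t}$-copies in link graphs $L_v$ and then ``extend through $v$'' to $K_{s,t}^{(3)}$-copies. This does not produce copies of $K_{s,t}^{(3)}$. If $\{u_i, w_j\}_{i\le s, j\le t}$ is a $K_{s,t}$ in $L_v$, the corresponding $st$ hyperedges of $H$ are $\{u_i, v, w_j\}$, which all pass through the single vertex $v$. In $K_{s,t}^{(3)}$ the $st$ expansion vertices (one per edge of $K_{s,t}$) must be pairwise distinct, so what you obtain is a cone over $K_{s,t}$, not $K_{s,t}^{(3)}$. No amount of averaging over $v$ repairs this: the object is wrong from the start, and the balanced-codegree bookkeeping you describe would be counting the wrong thing. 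You would instead need to place $K_{s,t}$ in the $2$-shadow $\partial^2 H$ and then extend each of its $st$ edges into a \emph{different} hyperedge of $H$ containing it, keeping all these hyperedges mutually disjoint outside $K_{s,t}$; this is exactly the translation step (Lemma~\ref{lem:translate}) the paper formalizes.

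What the paper actually does for the $3$-uniform balanced supersaturation, and what your proposal is missing, is a dichotomy on the density of the $2$-shadow $\partial^2 H$. When $\partial^2 H$ is dense, one can apply the Morris--Saxton balanced supersaturation for $K_{s,t}$ directly to $\partial^2 H$. When $\partial^2 H$ is sparse (but $H$ itself is dense, so the shadow has many triangles), the Morris--Saxton bound is too weak, and the paper introduces a genuinely new balanced supersaturation result (Proposition~\ref{prop:sparse}) tailored to tripartite graphs with few edges but many triangles, proved by a modified K\H{o}v\'ari--S\'os--Tur\'an-style ``saturated set'' argument. This sparse case is the reason the method breaks the $r\ge s+1$ tight-tree barrier and reaches $r\ge 2s/3+2$. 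Your proposal does not contain this idea, and the explanations you give for the thresholds $r\ge 2s/3+2$ and $t\ge 8s/3$ are post hoc rather than derived: in the paper they fall out of an explicit optimization over a free parameter $\ell$ in the technical supersaturation result and the arithmetic inequality $r\ge 1+\frac{2(st-1)t}{(3t-1)(t-1)}$. Without the sparse-case argument, the proposal cannot reach the stated range of $r$.
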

	Note that in the case $s=4$, this result gives tight random Tur\'an bounds for $K_{4,t}^{(r)}$ whenever $r\ge 4$ and $t$ is sufficiently large.  Since this is exactly the range in which ${r\choose 2}\ge s$, we conclude the following.
	\begin{cor}\label{cor:4}
		The graph $K_{4,t}$ satisfies \Cref{prob:Sidorenko} for all $t$ sufficiently large.
	\end{cor}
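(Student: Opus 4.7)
The plan is to verify \Cref{cor:4} by matching the range of $r$ covered by \Cref{thm:Kst} (with $s=4$) to the range of $r$ for which the hypothesis $\ex(n,K_{4,t}^{(r)})=\Theta(n^{r-1})$ of \Cref{prob:Sidorenko} holds.

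First I would identify the relevant values of $r$ using the results cited in the paragraph preceding \Cref{thm:Kst}. The Kostochka--Mubayi--Verstra\"ete bound~\cite{kostochka2015turan} gives $\ex(n,K_{s,t}^{(r)})=\Theta(n^{r-1})$ whenever $\binom{r}{2}\ge s$, which for $s=4$ is precisely $r\ge 4$. For $r\in\{2,3\}$, the hypothesis of \Cref{prob:Sidorenko} fails for all $t$ sufficiently large: the case $r=2$ follows from the classical $\ex(n,K_{4,t})=\Theta(n^{7/4})$ (with the matching lower bound available for $t$ large by standard projective plane constructions), and the case $r=3$ follows from~\cite[Theorem 1.4]{ma2018some}. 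Thus the set of $r$ to which \Cref{prob:Sidorenko} applies is exactly $\{r : r\ge 4\}$.

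Next I would apply \Cref{thm:Kst} with $s=4$ to cover every such $r$. Since $2s/3+2=14/3$, the first bullet of \Cref{thm:Kst} handles all $r\ge 5$ with no additional constraint on $t$, while the second bullet (noting $\lfloor 14/3\rfloor = 4$) handles $r=4$ under the hypothesis $t\ge 8s/3=32/3$, i.e.\ $t\ge 11$. Choosing $t$ sufficiently large (in fact $t\ge 11$ suffices) therefore yields the tight bounds of \Cref{thm:strongExpansion} for exactly those $r$ satisfying the hypothesis of \Cref{prob:Sidorenko}, which is precisely the statement of \Cref{cor:4}.

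The main obstacle here is essentially nonexistent: the corollary reduces to the arithmetic observation that the threshold $r\ge 4$ arising from $\binom{r}{2}\ge 4$ coincides with the threshold $r\ge \lfloor 2s/3+2\rfloor = 4$ appearing in the second bullet of \Cref{thm:Kst} at $s=4$. All the substantive work has already been carried out inside the proof of \Cref{thm:Kst}; the value $s=4$ is simply the smallest $s$ for which the expansion-threshold $2s/3+2$ falls exactly on the complete-bipartite-threshold $\binom{r}{2}\ge s$.
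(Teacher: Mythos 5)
Your proposal is correct and follows essentially the same reasoning the paper uses in the sentence immediately preceding \Cref{cor:4}: \Cref{thm:Kst} at $s=4$ covers all $r\ge 5$ via the first bullet and $r=4$ via the second bullet (for $t\ge 11$), which coincides exactly with the range $\binom{r}{2}\ge 4$ from \cite{kostochka2015turan}, while $r\in\{2,3\}$ fail the hypothesis $\ex(n,K_{4,t}^{(r)})=\Theta(n^{r-1})$ for $t$ large. You spell out the arithmetic and the $r\in\{2,3\}$ exclusion more explicitly than the paper does, but the argument is the same; note only that for the $r=2$ exclusion, any superlinear lower bound on $\ex(n,K_{4,t})$ suffices, so the tight $\Theta(n^{7/4})$ bound (which requires $t$ large for the lower bound) is more than you need.
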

	The reader may be surprised at this point to see that the only $K_{s,t}$ for which positive answers are known to \Cref{prob:Sidorenko} are for $s=2$ and $s=4$ (notably with us not able to do this for the case $s=3$ and $r=3$).  This peculiar situation occurs because giving a positive answer to \Cref{prob:Sidorenko} for an integer $s$ of the form $1+{k-1\choose 2}$ is essentially easier than giving a positive answer for any other $s'$ lying between $1+{k-1\choose 2}$ and ${k\choose 2}$.  Indeed, for all of these values of $s'$, we primarily need to show $\ex(G_{n,p}^{(r)},K_{s',t}^{(r)})=O(n^{r-1})$ for all $r\ge k$, and this is easiest to do for the case $s=1+{k-1\choose 2}$ since $K_{s,t}^{(r)}\sub K_{s',t}^{(r)}$ for all $s'\ge s$.

	Our proof of \Cref{thm:Kst} relies on a technical balanced supersaturation result \Cref{prop:technicalSupersaturation}, which as a corollary yields the following supersaturation result of independent interest.  For this, we use the standard notation $f=\Om^*(g)$ to mean that there exists an absolute constant $C$ so that $f(n)\ge (\log n)^{-C} g(n)$.  
	
	\begin{prop}\label{prop:vanillaSupersaturation}
		For all $t\ge s\ge 3$, there exists a sufficiently large constant $C$ such that if $H$ is an $n$-vertex $3$-graph with $m$ edges such that $m\ge n^{3-3/s+1/st}(\log n)^{C}$, then $H$ contains at least
		$$
		\Omega^*(m^{st}n^{s+t-2st})
		$$
		copies of $K^{(3)}_{s,t}$.
	\end{prop}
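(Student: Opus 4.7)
The key observation is that counting $K_{s,t}^{(3)}$-homomorphisms in $H$ reduces to Sidorenko's inequality for $K_{s,t}$ applied to a codegree-weighted graph on $V(H)$.  Let $d(u,v):=|\{w\in V(H):\{u,v,w\}\in E(H)\}|$ denote the pair-codegree in $H$.  Each homomorphism $\phi\colon K_{s,t}^{(3)}\to H$ is determined by the images of the $s+t$ outer vertices $a_1,\dots,a_s,b_1,\dots,b_t$ together with an independent choice of image for each middle vertex $c_{ij}$ from the codegree neighborhood of $(\phi(a_i),\phi(b_j))$, and hence
\[
\#\mathrm{Hom}(K_{s,t}^{(3)},H)\;=\;\sum_{a_1,\dots,a_s,b_1,\dots,b_t\in V(H)}\;\prod_{i\in[s],\,j\in[t]} d(a_i,b_j),
\]
which is exactly the $K_{s,t}$-homomorphism count in the complete graph on $V(H)$ with edge weights $d$.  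Since $K_{s,t}$ is Sidorenko and the total ordered-edge weight is $6m$, we immediately obtain
\[
\#\mathrm{Hom}(K_{s,t}^{(3)},H)\;\ge\;n^{s+t}\!\left(\frac{6m}{n^{2}}\right)^{\!st}\;=\;6^{st}\,m^{st}\,n^{s+t-2st}.
\]

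To convert this into a lower bound on \emph{injective} copies, I would first preprocess $H$ so that every pair has codegree at most $D:=(m/n^{2})(\log n)^{C'}$ while retaining at least half of the edges; this preserves the Sidorenko bound above up to a factor of $2^{st}$.  Next I would bound the contribution of non-injective homomorphisms by enumerating the $O_{s,t}(1)$ partitions $\pi$ of $V(K_{s,t}^{(3)})$ that identify at least two vertices.  For each such $\pi$ the hom count is a sum of products of codegree factors with one or more joint-codegree terms (e.g.\ $|N_{a_i,b_j}\cap N_{a_{i'},b_{j'}}|$ in a middle--middle collision, or $d(a_1,b_j)^{2}$ in a left--left collision); bounding every such factor by $D$ and summing the remaining free variables by powers of $n$ gives, in every case, a count at most $D^{\alpha}n^{\beta}$ whose ratio to the main term $m^{st}n^{s+t-2st}$ is at most $(\log n)^{O_{s,t}(C')}\cdot(n^{2}/m)$.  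Since $3-3/s+1/st>2$ for $s\ge 3$, the hypothesis $m\ge n^{3-3/s+1/st}(\log n)^{C}$ forces $n^{2}/m\le(\log n)^{-C}$, so choosing $C$ large compared to $C'$ and $st$ makes this ratio at most $(\log n)^{-1}$.  Summing over the finitely many collision types and dividing by $|\mathrm{Aut}(K_{s,t}^{(3)})|=s!\,t!$ then yields the advertised $\Om^{*}(m^{st}n^{s+t-2st})$ copies.

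The main obstacle is the preprocessing step: \emph{a priori} a few pairs of $H$ could carry enormous codegree and account for most of the edges, so it is not immediate that we can drive the maximum codegree down to $(m/n^{2})(\log n)^{C'}$ by deleting only a minority of the edges.  A robust route is an iterative deletion combined with a moment argument controlling $\sum_{\{u,v\}} d(u,v)^{k}$ for suitable $k$; alternatively, one can invoke a dichotomy that either produces many copies of $K_{s,t}^{(3)}$ directly from a heavy-pair substructure (via a K\H{o}v\'ari--S\'os--Tur\'an argument on the links of heavy pairs) or else reduces to the case where a naive deletion succeeds.  The polylogarithmic slack in the hypothesis $m\ge n^{3-3/s+1/st}(\log n)^{C}$ is calibrated precisely to absorb this cleanup, and this is the same dichotomy that underlies the balanced supersaturation \Cref{prop:technicalSupersaturation}, from which the present proposition can alternatively be deduced as a direct corollary.
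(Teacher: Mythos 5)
Your route is genuinely different from the paper's. The paper deduces \Cref{prop:vanillaSupersaturation} as a one-line corollary of the technical balanced supersaturation result \Cref{prop:technicalSupersaturation} (take $\ell=k_0$, $k=mn^{-3+3/s}$, and use $k\ge n^{1/st}$), whose proof is the heavy lifting: a regularization lemma, the Morris--Saxton dense balanced supersaturation, the new sparse tripartite supersaturation \Cref{prop:sparse}, and the translation lemma \Cref{lem:translate}. You instead try to go directly via the weighted Sidorenko identity
\[
\#\mathrm{Hom}\bigl(K_{s,t}^{(3)},H\bigr)=\sum_{a_1,\dots,a_s,b_1,\dots,b_t}\prod_{i,j}d(a_i,b_j),
\]
which is a correct and rather elegant observation: it gives the homomorphism lower bound $\Omega(m^{st}n^{s+t-2st})$ for free, with no machinery at all. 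If the cleanup step could be made to work, this would be a substantially shorter and more elementary proof.

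Unfortunately, the cleanup step as proposed has a genuine gap, and not just a technical inconvenience: the preprocessing you propose (capping every pair-codegree at $D=(m/n^2)(\log n)^{C'}$ while keeping at least half the edges) is simply false for some hypergraphs in the relevant range. Take $H=K_{a,a,n}^{(3)}$, the complete tripartite $3$-graph on parts of sizes $a,a,n$ with $a=n^{3/4}$, so $m=a^2n=n^{5/2}$, which satisfies the hypothesis $m\ge n^{3-3/s+1/st}(\log n)^C$ when $3\le s\le 5$. Here the $a^2$ pairs between the two small parts each have codegree $n$ while $D\approx n^{1/2}(\log n)^{C'}$, so capping their codegrees destroys a $1-n^{-1/2+o(1)}$ fraction of $H$. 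Moreover, if you skip the capping, the degenerate homomorphisms dominate: the collisions $\phi(a_1)=\phi(a_2)$ alone contribute, in this example, roughly $a^{s+t-1}n^{st}$ homomorphisms, and the ratio of this to the main term $m^{st}n^{s+t-2st}=a^{2st}n^{s+t-st}$ is $n^{(2st-s-t-3)/4}$, a polynomial blow-up for all $t\ge s\ge 3$. So the phrase ``ratio at most $(\log n)^{O(C')}(n^2/m)$'' in your plan is not correct without the capping, and the capping is not achievable in general. You are aware of this obstacle and list two possible fixes, but the moment-argument route is unsubstantiated (the example above shows that no amount of polylog slack lets you delete your way to bounded codegree), and the second fix --- invoking the dichotomy behind \Cref{prop:technicalSupersaturation} --- is precisely the paper's proof, so at that point you have not produced an alternative argument. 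The essential missing idea is what the paper does instead: rather than trying to flatten codegrees, it splits into a ``dense shadow'' case, where Morris--Saxton supersaturation in $\partial^2 H$ already gives enough $K_{s,t}$'s to lift, and a ``sparse shadow'' case, where the bespoke tripartite supersaturation \Cref{prop:sparse} is needed. Handling the heavy-pair structure head on (e.g.\ via your suggested K\H{o}v\'ari--S\'os--Tur\'an argument on links of heavy pairs) is plausible but would need to be carried out; as written, your proposal does not get there.
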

	The count of $\Omega^*(m^{st}n^{s+t-2st})$ is tight up to the implicit logarithmic factors, as can be seen by considering $H$ to be a random 3-graph with $m$ hyperedges, and it is possible that this tight bound continues to hold under the weaker hypothesis of $m=\Omega(n^{3-3/s})$.  We discuss this possible strengthening and other extensions of \Cref{prop:vanillaSupersaturation} in the concluding remarks of the paper.

	\subsection{Proof Outline}\label{sub:sketch}
	Here we very briefly sketch the high-level ideas of our proof for \Cref{thm:Kst}. The lower bounds for the theorem follow easily from known results in the literature, so all that is needed is to prove the corresponding upper bounds.  
	
	As is often the case with random Tur\'an problems, we will prove our upper bounds by proving a ``balanced supersaturation result'', which informally says that that if an $r$-graph $H$ has many edges, then we can find a collection $\c{H}$ of copies of $K_{s,t}^{(r)}$ in $H$ which are ``spread out'' in the sense that no set of edges of $H$ lies in too many copies of $K_{s,t}^{(r)}$ in $\c{H}$.  
	
	In order to further discuss the details of our proof, we define for an $r$-graph $H$ its \textit{$k$-shadow} $\partial^k H$ to be the $k$-graph on $V(H)$ where a given $k$-set $K$ is an edge of $\partial^k H$ if and only if  $K$ is contained in an edge of $H$.
	
	\textbf{Balanced Supersaturation for $r\ge s+1$}.  At a very high-level, our previous proof of \Cref{thm:OldKst} came from a balanced supersaturation result which itself was proven by first (I) finding a (large) collection of graph $K_{s,t}$'s in the shadow $\partial^2 H$, and then (II) extending these copies of $K_{s,t}$ into copies of $K_{s,t}^{(r)}$ in $H$.  Step (II) here is rather straightforward to implement, so the main difficulty lies in establishing (I). 
	
	To prove (I), i.e.\ to find a large collection of graph $K_{s,t}$'s in the shadow $\partial^2 H$, we roughly speaking build our collection in one of two ways depending on the structure of $\partial^2 H$.  First, if $\partial^2 H$ has many edges, then a known balanced supersaturation result of Morris and Saxton~\cite{morris2016number} for $K_{s,t}$ immediately implies that $\partial^2 H$ contains many copies of $K_{s,t}$.  Otherwise if $\partial^2 H$ is sparse, then one can argue that there are many $s$-sets $\{u_1,\ldots,u_s\}\sub V(H)$ which are contained in many edges of $H$; and it is in this step here that we crucially need $r\ge s+1$ (as otherwise an $s$-set will be contained in at most one edge of $H$).  By using these many edges of $H$ containing $\{u_1,\ldots,u_s\}$, one can show that there exist many vertices $v$ such that $\{u_1,\ldots,u_s,v\}$ is contained in an edge of $H$, and taking $t$ such sets implies the existence of many $K_{s,t}$'s in $\partial^2 H$. 
	
	Slightly more generally, it is implicitly shown in \cite[Theorem 2.8, Lemma 4.3]{nie2024random} that for any graph $F$, there exists an integer $t(F)$ such that for $r\ge t(F)$, an analog of (I) can be proven for $F$ through an argument similar to the one outlined above.  At a very high level, the parameter $t(F)$ is the smallest quantity such that it is ``easy'' to prove $\ex(n,F^{(r)})=O(n^{r-1})$ for $r\ge t(F)$, and intuitively this implies that it should also be ``easy'' to prove a corresponding bound for the random Tur\'an problem in this same range of $r$.  More precisely, $t(F)$ is defined to be the smallest $t$ such that $F^{(t)}$ is a spanning subgraph of a ``tight tree'' of uniformity $t$.  
	
	Prior to the present work, tight bounds for the random Tur\'an problem for expansions were known only in the range $r\ge t(F)$, since below this point several of the key technical results in the area fail to go through.  As such, we refer to $t(F)$ as the ``tight-tree barrier'' for random Tur\'an problems of hypergraph expansions, with the main objective of this paper being the development of ideas to go beyond this barrier in the case of complete bipartite graphs (and possibly beyond).
	
	\textbf{Balanced Supersaturation for Smaller $r$}.  Our key innovation for proving our stronger bound of $r\ge 2s/3+2$ in \Cref{thm:Kst} is that we no longer build our large collection of copies of $K_{s,t}^{(r)}$ in $H$ from copies of $K_{s,t}$ in the \textit{2-shadow }$\partial^2 H$, but rather from copies of $K_{s,t}^{(3)}$ in the \textit{3-shadow }$\partial^3 H$.  Specifically, we will prove a balanced supersaturation result for $K_{s,t}^{(3)}$ which (unlike the result of Morris and Saxton for the 2-shadow case) will be strong enough to be applied regardless of the density of $\partial^3 H$, allowing us to build a large collection of copies of $K_{s,t}^{(3)}$ without needing to rely on the existence of sets $\{u_1,\ldots,u_s\}$ contained in many edges, letting us go below the $r\ge s+1$ barrier that was intrinsic to previous approaches.
	
	It remains then for us to prove this balanced supersaturation result for $K_{s,t}^{(3)}$.  Our proof will rely on two balanced supersaturation results for graphs: one which is effective when $\partial^2H$ is dense and one which is effective for sparse $\partial^2 H$. As before, the case when $\partial^2 H$ has many edges will use the result of Morris and Saxton~\cite{morris2016number} mentioned above.  For the second (and far more technical) case of $\partial^2H$ being sparse, we develop a new balanced supersaturation result for $K_{s,t}$ which is effective for tripartite graphs which contain many triangles but few edges, the proof of which is based implicitly on the idea of \textit{vertex supersaturation} as developed in \cite{mckinley2023random}.  Combining these two approaches gives our desired balanced supersaturation result for $K_{s,t}^{(3)}$, from which we can conclude our bounds in \Cref{thm:Kst} through the usage of our general theorems for expansions developed in \cite{nie2024random}.

	\section{Preliminaries}
	In this section, we gather together the three main technical lemmas of this paper (balanced supersaturation when $\partial^2 H$ is dense, balanced supersaturation when $\partial^2 H$ is sparse, and a lemma translating balanced supersaturation for $K_{s,t}$  into balanced supersaturation for $K_{s,t}^{(3)}$), together with two extra auxiliary results.   We will ultimately also need to work with some of the general tools from \cite{nie2024random} regarding random Tur\'an problems for expansions, but we postpone stating these results until they are needed in \Cref{sub:finish} for ease of reading.
	
	\subsection{Definitions and Conventions}
	
	In order to distinguish between the various types of objects used throughout the paper, we will work with the following set of notational conventions: graphs will be denoted by $G$, 3-graphs will be denoted by $H$, collections of copies of $K_{s,t}$ in a graph $G$ will be denoted by $\c{G}$, and collections of copies of $K_{s,t}^{(3)}$ in a 3-graph $H$ will be denoted by $\c{H}$.  We will treat $\c{G}$ and $\c{H}$ as $st$-uniform hypergraphs on $E(G)$ and $E(H)$, respectively.  For clarity, we refer to elements of $\c{G},\c{H}$ as hyperedges and elements of $G,H$ as edges.  To further distinguish between elements of $G,H$, we will let $e\in G$ denote an edge of $G$ and $\sig'\sub G$ denote a set of edges of $G$, while in contrast we let $h\in H$ denote an edge of $H$ and $\sig\sub H$ denote sets of edges of $H$.
	
	We recall that we use the notation $f=\Om^*(g)$ to mean that there exists an absolute constant $C$ so that $f(n)\ge C^{-1}(\log n)^{-C} g(n)$, and that $\partial^2 H$ denotes the graph on $V(H)$ whose edges are pairs of $H$ that lie in an edge of $H$.  We will also make use of the following non-standard definition which we highlight below for ease of recall.
	\begin{defn}
		Given a set of edges $\sig'\sub E(G)$ of a graph $G$, we define $\sig'_v\sub V(G)$ to be the set of vertices belonging to at least one edge of $\sig'$.
	\end{defn}

	To help guide the reader through some of the more technical aspects of our proof, we have included several asides discussing the intuition behind the various ideas that we use.  To help distinguish these asides from the full formal details of the argument, we separate these two portions of the discussion with labels \textbf{Intuition} and \textbf{Formal Details}.
	
	\subsection{Dense Balanced Supersaturation}
	The following result was essentially proven by Morris and Saxton.
	\begin{prop}[Theorem 7.4, \cite{morris2016number}] \label{prop:dense}
		For all $s,t\ge 2$ there exists an $L_0>0$ such that the following holds.  Let $G$ be a bipartite graph with bipartition $U\cup V$ which has $|U|\le |V|$ and $L |V|^{2-1/s}$ edges.  If $L\ge L_0$, then there exists a collection $\c{G}$ of $K_{s,t}$'s in $G$ whose parts of size $s$ lie in $U$ such that $|\c{G}|=\Omega(L^{st} |V|^s)$, and such that every $\sig'\sub E(G)$ with $|\sig_v'\cap U|=a\ge 1$ and $|\sig_v'\cap V|=b\ge 1$ satisfies \[\deg_{\c{G}}(\sig')\le O((L |V|^{1-1/s})^{s-a}(L^s)^{t-b}).\]
	\end{prop}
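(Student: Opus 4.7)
The plan is to adapt the argument of Morris and Saxton's Theorem 7.4: construct $\c{G}$ as (essentially) all $K_{s,t}$'s in $G$ whose $s$-part lies in $U$, after a cleanup of $G$ that controls its degree and $s$-codegree structure.

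First I would establish the size lower bound by a K\H{o}v\'ari--S\'os--Tur\'an-type convexity argument. The average $V$-degree in $G$ is $L|V|^{1-1/s}$, so by Jensen's inequality
\[
\sum_{U_s\sub U}|N(U_s)|=\sum_{v\in V}\binom{\deg_G(v)}{s}\ge \Om(L^s|V|^s),
\]
and another application of Jensen to $\sum_{U_s}\binom{|N(U_s)|}{t}$ (using $|U|\le |V|$) shows that the total number of $K_{s,t}$'s in $G$ with $s$-part in $U$ is $\Om(L^{st}|V|^s)$.

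Second, I would perform the cleanup: arrange that every $v\in V$ satisfies $|N(v)|\le O(L|V|^{1-1/s})$, and that every $U_s\sub U$ contributing to $\c{G}$ satisfies $|N(U_s)|\le O(L^s)$. The first is achieved by deleting every $v\in V$ of excessive degree, which by averaging removes only a small fraction of the edges. The second is more delicate, and can be achieved following Morris--Saxton either by excluding atypical $s$-sets from the construction or by a random sparsification argument; the aim is to do so while preserving $|\c{G}|=\Om(L^{st}|V|^s)$, and this is the main obstacle I expect.

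Once the cleanup is in place, the codegree bound is an easy count. Fix $\sig'\sub E(G)$ with $|\sig'_v\cap U|=a\ge 1$ and $|\sig'_v\cap V|=b\ge 1$, and pick any $v^*\in\sig'_v\cap V$. Any $K_{s,t}\in\c{G}$ containing $\sig'$ has its $s$-part $U_s\supseteq\sig'_v\cap U$ lying entirely inside $N(v^*)$ (since every $U$-vertex of the copy must be adjacent to $v^*\in V_t$), giving at most $|N(v^*)|^{s-a}=O((L|V|^{1-1/s})^{s-a})$ choices for the $s-a$ new vertices of $U_s$. Once $U_s$ is fixed, the $t-b$ new vertices of $V_t$ must lie in $N(U_s)$, contributing a further factor of at most $|N(U_s)|^{t-b}=O((L^s)^{t-b})$ by the cleanup. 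Multiplying yields the claimed bound.
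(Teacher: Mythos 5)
The paper does not actually prove this proposition; it is invoked as a black box from Morris and Saxton (their Theorem 7.4), with a short paragraph right after the statement explaining the two cosmetic modifications needed (passing to a bipartite subgraph with at least half the edges, and noting that their KST-style argument naturally places the $s$-part in the smaller side $U$). So there is no ``paper's own proof'' to compare against; what you have produced is an attempt to reconstruct Morris--Saxton's argument from scratch.

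As for the attempt itself: the lower bound on $|\c{G}|$ via two applications of convexity is correct and is indeed the skeleton of Morris--Saxton's proof, and the codegree computation at the end (fix $v^*\in\sig'_v\cap V$, bound the $s-a$ new $U$-vertices by $|N(v^*)|^{s-a}$ and the $t-b$ new $V$-vertices by $|N(U_s)|^{t-b}$) is correct \emph{given} the cleanup. However, the cleanup step is where the real content of Morris--Saxton's theorem lies, and your proposed version of it does not work. The problem is the regime $|U|\ll |V|$, which the hypotheses allow: the edge count forces only $|U|\gtrsim L|V|^{1-1/s}$, which can be far smaller than $|V|$ when $L^s\ll |V|$. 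In that regime the average value of $|N(U_s)|$ over $s$-sets $U_s\subseteq U$ is of order $L^s(|V|/|U|)^s\gg L^s$, so ``excluding atypical $s$-sets with large $|N(U_s)|$'' excludes essentially all of them; and even subsampling each $N(U_s)$ down to size $O(L^s)$ only yields $\approx L^{st}|U|^s$ copies in total, which is short of $\Omega(L^{st}|V|^s)$ by the factor $(|V|/|U|)^s$. A routine random sparsification of $G$ has the same problem (it also rescales $L$, so the target count shrinks in tandem). What is actually needed is an argument that keeps $|N(U_s)|$ large when $|U|$ is small but controls the \emph{codegree distribution} of the copies over subsets $\sig'$, which is precisely the refinement Morris and Saxton supply and which the phrase ``refined version of K\H{o}v\'ari--S\'os--Tur\'an'' in the paper is alluding to. You correctly flag this as the main obstacle, but as written it is a genuine gap rather than a detail to be filled in; to close it you would need to reproduce Morris and Saxton's actual balancing mechanism rather than a degree/codegree cap on $G$.
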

	
	Strictly speaking, Morris and Saxton prove a slightly different result than that of \Cref{prop:dense} (which would also suffice for our purposes at the cost of complicating our notation), but their proof easily adapts to prove the result above.  More precisely, there are two differences between the  statement of Morris and Saxton's  with \Cref{prop:dense}.  First, they do not consider the setting of bipartite graphs, but one can easily reduce to this case by considering a bipartite subgraph of $G$ containing at least half its edges.  In the bipartite setting, their result as written does not say whether the copies of $K_{s,t}$ have their parts of size $s$ in $U$ or in $V$, but it is easy to see from their proof (which is essentially a refined version of the classic proof of the K\H{o}v\'ari-S\'os-Tur\'an theorem) that we can guarantee that every $K_{s,t}$ in $\c{G}$ has their set of size $s$ in the smaller set $U$, from which our statement follows.
	\subsection{Sparse Balanced Supersaturation}
	\textbf{Intuition}.  The sparse case of our argument will require a more complicated analog of \Cref{prop:dense} for tripartite graphs.  We note that this result will not be needed until halfway through the proof of \Cref{prop:technicalSupersaturation}, and as such, the reader may wish to postpone fully digesting the statement of the result until this point.
	
	The basic idea of our sparse balanced supersaturation result is roughly the same as that of \Cref{prop:dense}: we want to say that if $G$ is a ``nice'' tripartite graph (i.e. if $G$ is close to regular, has many triangles, and has few edges), then we can find a collection $\c{G}$ of $K_{s,t}$'s in $G$ such that (1) we can specify how each part of size $s$ in the $K_{s,t}$'s intersects the  parts of $G$, (2) $|\c{G}|$ is large, and (3) we can bound $\deg_{\c{G}}(\sig')$ by some function $\phi$ depending only on how the vertices $\sig'_v$ intersect the parts of $G$.  For technical reasons, we will also need to further impose that $G$ is the shadow of a ``nice'' hypergraph $H$.  
	
	\textbf{Formal Details}.  We begin by specifying the function $\phi$ mentioned above.
	
	\begin{defn}
		Let $G$ be a tripartite graph on $V_1\cup V_2\cup V_3$ and  $n,\del,K,\ell,m_{1,3},m_{2,3}$ a set of given parameters.  We define a function $\phi:2^{V(G)}\to \R\cup\{\infty\}$ as follows: given $\nu\sub V(G)$, let $a=|\nu\cap (V_1\cup V_2)|$ and $b=|\nu\cap V_3|$.  If $a=0$ or $b=0$, then $\phi(\nu)=\infty$.  Otherwise, if $\nu\cap V_1\ne \emptyset$ we define 
		\[\phi(\nu)=\frac{\ell^{s-1} n^{s-1+1/s}(\ell^{3-2s}K^{s-1})^t}{\del m_{1,3}(\del \ell^{-1} K n^{1-2/s})^{a-1}(\del \ell^{3-2s} K^{s-1})^{b-1}}\]
		and if $\nu\cap V_1=\emptyset $ we define
		\[\phi(\nu)=\frac{\ell^{s-1} n^{s-1+1/s}(\ell^{3-2s}K^{s-1})^t}{\del m_{2,3}(\del \ell^{-1} K n^{1-2/s})^{a-1}(\del \ell^{3-2s} K^{s-1})^{b-1}}.\]
	\end{defn}
	
	We emphasize that the only difference between the two inlined expression is the usage of $m_{1,3}$ and $m_{2,3}$ in the denominator.   We also note that having $\phi(\nu)=\infty$ when either $a=0$ or $b=0$ is merely a matter of convenience, as we will never really work with such $\nu$.  With this we can now state our balanced supersaturation result for ``nice'' tripartite graphs (with the intuition for its various hypothesis being given directly after the statement of the proposition).
	
	\begin{prop}\label{prop:sparse}
		For all $s,t\ge 3$, there exists a sufficiently small $\del>0$  such that the following holds.  Let $\ell,K,n$ be real numbers and let $H$ be a tripartite 3-graph with tripartition $V_1\cup V_2\cup V_3$ with $m_{i,j}$  the number of pairs in $V_i\cup V_j$ contained in at least one hyperedge.  Further, assume $\ell,K,n, H$ satisfy the following:
		\begin{itemize}
			\item[(a)] $|V_i|\le n$ for all $i$.
			\item[(b)] $H$ has at least $K n^{3-3/s}(12\log n)^{16}$ hyperedges.
			\item[(c)] $m_{i,j}\le \ell |V_1|^{2-1/s}$ for all $i,j$.
			\item[(d)] We have $\del^{-1}\le \ell\le \del K^{(s-1)/(2s-3)}$. 
			\item[(e)] Each pair in $V_i\cup V_j$ is contained in at most $K n^{3-3/s} m_{i,j}^{-1} (12\log n)^{32}$ hyperedges for all $i,j$.
			\item[(f)] Each vertex in $V_1$ has at most $\ell |V_1|^{1-1/s} (12\log n)^{16}$ neighbors in $V_2$.
		\end{itemize}
		Then for $G=\partial^2 H$, there exists a collection $\c{G}$ of $K_{s,t}$'s in $G$ such that every $K_{s,t}$ intersects $V_1$ in 1 vertex and $V_2$ in $s-1$ vertices, such that $|\c{G}|=\Om^*(\ell^{s-1} n^{s-1+1/s}(\ell^{3-2s}K^{s-1})^t)$, and such that for any $\sig'\sub E(G)$ we have $\deg_{\c{G}}(\sig')\le \phi(\sig'_v)$.
	\end{prop}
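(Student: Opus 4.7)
The plan is to construct $\c{G}$ by converting copies of $K_{s-1,t}$ in the \emph{links} of $V_1$-vertices into copies of $K_{s,t}$ in $\partial^2 H$. For each $u \in V_1$, let $L_u$ denote the bipartite graph on $V_2 \cup V_3$ whose edges are the pairs $\{w,z\}$ with $\{u,w,z\} \in E(H)$. Any copy of $K_{s-1,t}$ in $L_u$ whose $(s-1)$-part lies in $V_2$ and whose $t$-part lies in $V_3$ extends, by adjoining $u$ to the $(s-1)$-part, to a copy of $K_{s,t}$ in $\partial^2 H$ with exactly the required distribution across $V_1,V_2,V_3$.

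For the size lower bound, I would first pass to a subset $U_1 \sub V_1$ of size $\Om(n)$ on which $|E(L_u)| \ge \Om^*(K n^{2-3/s})$; such a set exists by averaging against hypothesis (b). For any such $u$, the $V_2$-part of $L_u$ is contained in the $\partial^2 H$-neighbourhood of $u$ in $V_2$ and so has at most $O(\ell n^{1-1/s} (\log n)^{16})$ vertices by (f), making the average $V_2$-degree in $L_u$ at least $\Om^*(\ell^{-1} K n^{1-2/s})$. A K\H{o}v\'ari--S\'os--Tur\'an-style convexity argument then produces at least $\Om^*((\ell n^{1-1/s})^{s-1}(\ell^{3-2s} K^{s-1})^t)$ copies of $K_{s-1,t}$ of the correct orientation inside each $L_u$, and summing over $u \in U_1$ yields $|\c{G}| = \Om^*(\ell^{s-1} n^{s-1+1/s}(\ell^{3-2s} K^{s-1})^t)$.

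To bound $\deg_{\c{G}}(\sig') \le \phi(\sig'_v)$, I would analyse case by case on $a := |\sig'_v \cap (V_1 \cup V_2)|$ and $b := |\sig'_v \cap V_3|$, both assumed $\ge 1$. The number of $K_{s,t}$'s in $\c{G}$ through $\sig'_v$ is bounded by a stepwise extension count. First, fix an ``anchor'' pair $(x,z) \in \sig'_v \times \sig'_v$ with $z \in V_3$ and $x \in V_1$ when $\sig'_v$ meets $V_1$ (otherwise $x \in V_2$); this contributes the $m_{1,3}$ or $m_{2,3}$ factor in the denominator of $\phi$. Next, extend by the remaining $a-1$ vertices of $V_1 \cup V_2$, each costing a factor $\Om^*(\ell^{-1} K n^{1-2/s})$ --- the typical $V_3$-degree of a $V_2$-vertex in $L_u$, controlled by (e). Finally, extend by the remaining $b-1$ vertices of $V_3$, each costing a factor $\Om^*(\ell^{3-2s} K^{s-1})$ --- the typical common $V_3$-neighbourhood of an $(s-1)$-set of $V_2$ inside $L_u$. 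These per-step factors exactly match the denominator of $\phi$, with the $\del$ factors absorbed by the polylogarithmic slack in (b), (e), (f) together with hypothesis (d).

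The principal obstacle I anticipate is \emph{uniformity}: the extension counts above are typical averages, but $\phi$ must hold for every $\sig'_v$, not merely in expectation. This is where hypotheses (c), (e), and (f) are essential, as they rule out vertices and pairs whose local structure deviates sharply from typical behaviour. Following the vertex-supersaturation philosophy of \cite{mckinley2023random}, I would further filter $U_1$ (and the candidate $(s-1)$-sets inside each $L_u$) to subcollections on which the extension counts are genuinely uniform, absorbing the remaining logarithmic error into the $\Om^*$ notation.
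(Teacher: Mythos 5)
Your proposal takes a genuinely different route from the paper, and there is a gap in it that the paper's framework was designed precisely to avoid.

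Your plan is to build $\c{G}$ directly: find $K_{s-1,t}$'s in each link $L_u$ for $u\in V_1$, adjoin $u$, sum over $u$, and then argue after the fact that $\deg_{\c{G}}(\sig')\le\phi(\sig'_v)$ via stepwise extension bounds plus a filtering pass for uniformity. The paper instead uses a \emph{maximality/saturation} argument: it declares $\c{G}_v$ to be the largest $\phi$-bounded collection (so the degree bound holds \emph{by construction}), assumes for contradiction it is small, defines the ``saturated'' sets $\c{F}=\{\nu:\deg_{\c{G}_v}(\nu)\ge\floor{\phi(\nu)}\}$, and then shows (Claims 4.2--4.5) that the number of $\c{F}$-good copies of $K_{s,t}$ strictly exceeds $|\c{G}_v|$, contradicting maximality. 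Your stepwise extension idea---anchor on a pair in $V_{1,3}$ or $V_{2,3}$, then extend by $a-1$ and $b-1$ more vertices at costs $\ell^{-1}Kn^{1-2/s}$ and $\ell^{3-2s}K^{s-1}$ respectively---is in fact close in spirit to the paper's Claims P and S, which first build $\c{F}$-good ``claws'' $(S,v)$ and then extend them to full $\c{F}$-good $K_{s,t}$'s. But the paper only ever counts \emph{$\c{F}$-good} extensions, so the degree bound never has to be checked; it is maintained invariantly.

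The gap in your approach is that the codegree bound must hold for \emph{every} $\sig'\sub E(G)$ and every shape $(a,b)$ simultaneously, and the hypotheses (c), (e), (f) only control $L^\infty$-type quantities of $H$ at the level of single vertices and pairs. They do not directly bound $\deg_{\c{G}}(\sig')$ for an arbitrary $\sig'$ once you have summed contributions over many $u\in V_1$ (when $\sig'_v\cap V_1=\emptyset$), over many $(s-1)$-subsets, etc. You explicitly flag this as the ``principal obstacle'' and propose to remove atypical vertices and $(s-1)$-sets by filtering, but this is where the proposal loses its footing: filtering to make first-moment quantities uniform does not prevent a particular $\sig'$ of some shape from being contained in far more than its budgeted number of copies, because the badness of $\sig'$ is a property of the constructed collection $\c{G}$, not a structural defect of $H$ that your hypotheses let you prune. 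This is exactly what the saturation framework handles: whenever a $\sig'$ hits its budget $\floor{\phi(\sig')}$, it enters $\c{F}$ and all future copies must avoid it, with \Cref{cl:link} quantifying how few vertices this rules out at each extension step. Without that mechanism, you have no uniform control, and I do not see how your filtering can be made to work without essentially reinventing the maximality argument.

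A secondary (and more cosmetic) issue: your count takes $U_1\sub V_1$ with $|U_1|=\Om(n)$, but (a) only gives $|V_1|\le n$; if $|V_1|\ll n$ you would need to redo the per-link estimate (each $L_u$ then has more edges, so the product still works out, but the estimate needs to be phrased in terms of $|V_1|$ throughout). The paper handles this uniformly by using $|V_1|\le n$ only in the direction that helps, and by tracking $m_{i,j}$ explicitly.
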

	
	While the hypothesis in \Cref{prop:sparse} look intimidating at first glance, each individual part is fairly straightforward to understand on its own.  For example, (a) essentially says that $H$ is an $n$-vertex hypergraph, (b) says that $H$ has many edges, and (c), (d) combine to say that $H$ has a relatively sparse shadow graph. Conditions (e) and (f) are ``regularity'' conditions, with (e) saying none of the $m_{i,j}$ pairs in the shadow of $V_i\cup V_j$ are contained in substantially more than the average number of edges of $H$, and (f) says that none of the $V_1$ vertices have substantially more than the average degree into $V_2$. For ease of reading, we will postpone the proof of \Cref{prop:sparse} until \Cref{subsec:Sparse}.
	
	\subsection{Translating from Graphs to Hypergraphs}
	
	\textbf{Intuition}. We will use the two results above to find a large  ``spread out'' collection $\c{G}$ of graph $K_{s,t}$'s in the shadow graph of a hypergraph $H$.  Given such a collection $\c{G}$, we will form a collection $\c{H}$ of $K_{s,t}^{(3)}$'s in $H$ by  selecting copies $F'\in \c{G}$ and then iteratively extending each edge $e\in F'$ into an edge $h\in H$ containing it. 
	In doing this, we will want to make sure our collection of $K_{s,t}^{(3)}$'s is still ``spread out'', i.e.\ that $\deg_{\c{H}}(\sig)$ is not too large for any given set of hyperedges $\sig$.  
	
	In order to bound $\deg_{\c{H}}(\sig)$ for some given $\sig\sub H$, we will need to consider all the possible ways that each of the edges $h\in \sig$ could have been generated by this procedure.  That is, we will need to consider all the possible ways of picking for each edge $h\in \sig$ an edge $e_h\in G$ contained in $h$ (which we think of as the edge in a copy $F'$ that we extended to form $h$), and then work with the set $\sig'=\{e_h:h\in \sig\}$.  The set of all such $\sig'$ that can be formed in this way will be denoted by $\proj(\sig)$.  More precisely, we use the following definition.

	\textbf{Formal Details}.
	
	\begin{defn}
		If $\sig$ is a collection of 3-sets such that $|h\cap h'|\le 1$ for all distinct $h,h'\in \sig$, then we define the \textit{projection} $\proj(\sig)$ to be the collection of all sets of 2-sets $\sig'$ obtained by including exactly one pair from each $h\in \sig$. 
	\end{defn}
	For example, if $\sig=\{123,345\}$ then
	\[\proj(\sig)=\{\{12,34\},\{12,35\},\{12,45\},\{13,34\},\{13,35\},\{13,45\},\{23,34\},\{23,35\},\{23,45\}\}.\]

	We now state our main translation lemma from copies of $K_{s,t}$ into $K_{s,t}^{(3)}$.  We in fact state this in the more general context of arbitrary graphs $F$ since proof and statement of the result are no more difficult in this more general setting. 
	
	\begin{lem}\label{lem:translate}
		Let $F$ be a graph, let $H$ be a $3$-partite $3$-graph with partition $V_1\cup V_2\cup V_3$, let $G_{i,j}$ denote the pairs of vertices in $V_i\cup V_j$ contained in at least one edge of $H$, and let $\c{G}$ be a collection of copies of $F$ in the shadow graph of $H$.  If there exist real numbers $D_{i,j},d_{i,j}\ge 2v(F)+2|F|$ such that $d_{i,j}\le \deg_{H}(e)\le D_{i,j}$ for every pair $e\in G_{i,j}$, and if there exists integers $c_{i,j}$ such that every copy of $F\in \c{G}$ has exactly $c_{i,j}$ edges in $G_{i,j}$, then there exists a collection $\c{H}$ of expansions $F^{(3)}$ in $H$ such that \[|\c{H}|=\Om\left(|\c{G}|\prod_{i,j} d_{i,j}^{c_{i,j}}\right)\] and such that for any $\sig\sub H$, we have $\deg_{\c{H}}(\sig)=0$ if $|h\cap h'|>1$ for some distinct $h,h'\in \sig$, and otherwise.
		\[\deg_{\c{H}}(\sig)\le \sum_{\sig'\in \proj(\sig)} \deg_{\c{G}}(\sig')\prod_{i,j} D_{i,j}^{c_{i,j}-|\sig'\cap G_{i,j}|}.\]
	\end{lem}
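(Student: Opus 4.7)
The plan is to construct $\c{H}$ by a greedy one-edge-at-a-time extension of each $F'\in\c{G}$. Fix an arbitrary ordering $e_1,\ldots,e_{|F|}$ of the edges of $F'$. If $e_i$ has its two vertices in $V_a\cup V_b$ then $e_i\in G_{a,b}$, so by hypothesis $e_i$ lies in at least $d_{a,b}$ hyperedges of $H$, each completing $e_i$ with a vertex in the third part $V_c$. Processing the edges in order, at step $i$ I pick a hyperedge $h_i\in H$ containing $e_i$ whose third vertex $w_i$ avoids $V(F')\cup\{w_1,\ldots,w_{i-1}\}$; since this forbidden set has size at most $v(F)+|F|\le d_{a,b}/2$, such a choice exists, and the hyperedges $\{h_1,\ldots,h_{|F|}\}$ then form a copy of $F^{(3)}$ in $H$, which I add to $\c{H}$ (treated as a multiset if needed).

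For the lower bound, each $F'\in\c{G}$ yields at least $\prod_i (d_{a_i,b_i}/2)=\Om\bigl(\prod_{i,j} d_{i,j}^{c_{i,j}}\bigr)$ such expansions, using that $F'$ has exactly $c_{i,j}$ edges in $G_{i,j}$. This immediately gives $|\c{H}|=\Om\bigl(|\c{G}|\prod_{i,j} d_{i,j}^{c_{i,j}}\bigr)$. If one insists that $\c{H}$ be an honest set, note that a given copy of $F^{(3)}$ is generated from at most $O(1)$ distinct $F'\in\c{G}$, since the core pairs of its hyperedges determine the underlying $F'$ up to automorphisms of $F$; the resulting constant loss is absorbed in the $\Om$.

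For the upper bound on $\deg_{\c{H}}(\sig)$, fix $\sig\sub H$. If two distinct $h,h'\in\sig$ share at least two vertices, then they cannot both lie in a copy of $F^{(3)}$ (in any expansion, distinct hyperedges intersect in at most one vertex, namely a shared core vertex), so $\deg_{\c{H}}(\sig)=0$. Otherwise, any copy of $F^{(3)}\in\c{H}$ containing $\sig$ arose by extending some $F'\in\c{G}$, and each $h\in\sig$ is the extension of a unique edge $e_h\in F'$; setting $\sig'=\{e_h:h\in\sig\}\in\proj(\sig)$, there are at most $\deg_{\c{G}}(\sig')$ choices for the underlying $F'$, and once $F'$ is fixed the extensions of the edges in $\sig'$ are prescribed by $\sig$, while each of the remaining $c_{i,j}-|\sig'\cap G_{i,j}|$ edges of $F'$ in $G_{i,j}$ admits at most $D_{i,j}$ extensions by hypothesis. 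Summing over $\sig'\in\proj(\sig)$ gives the stated bound.

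The proof is essentially bookkeeping and I do not expect any deep obstacle; the main points of care are (i) ensuring the greedy extension always has a valid choice, which is precisely the role of the hypothesis $d_{i,j}\ge 2v(F)+2|F|$, and (ii) correctly enumerating all ways a given $\sig$ could have arisen in the construction, which is exactly the role of $\proj(\sig)$ in the statement.
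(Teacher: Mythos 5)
Your proof is correct and follows essentially the same approach as the paper: greedily extend each $F'\in\c{G}$ edge by edge, use the hypothesis $d_{i,j}\ge 2v(F)+2|F|$ to guarantee at least $d_{i,j}/2$ valid extensions at each step, observe that each resulting $F^{(3)}$ is overcounted only $O(1)$ times, and bound $\deg_{\c{H}}(\sig)$ by summing over all $\sig'\in\proj(\sig)$ the number of ways to choose $F'\supseteq\sig'$ and to extend the remaining edges. The bookkeeping details (including the case $|h\cap h'|>1$) match the paper's argument.
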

	
	Variants of this translation lemma have appeared in essentially every previous work on the random Tur\'an problem for expansions, such as \cite[Lemma 2.1]{nie2024random}.  We note, however, that all of these previous works  only ever considered the case that each copy of $F$ lied entirely within two of the parts $V_i,V_j$ (so that $c_{i,j}=|F|$ and $c_{i',j'}=0$ otherwise), allowing for a much simpler statement of the result.  For our purposes, however, it will be crucial that we allow our copies of $F$ to lie in all three parts of $H$.
	\begin{proof}
		Consider the following procedure for constructing copies of $F^{(3)}$ in $H$. Start by picking some $F'\in \c{G}$ and let $e_1,\ldots,e_m$ be an arbitrary ordering of the edges of $F'$.  Iteratively pick $h_k\in E(H)$ to be any edge of $H$ which contains $e_k$ and which is disjoint from $(V(F')\cup \bigcup_{k'<k} h_{k'})\setminus e_k$.  Observe that if this process terminates, then these $h_k$ edges form a copy of $F^{(3)}$ in $H$, and we let $\c{H}$ denote the set of copies of $F^{(3)}$ that can be formed in this way.  It remains to analyze the properties of this collection.
		
		Observe that if $e_k\in G_{i,j}$, then the number of choices for $h_k$ is at least $d_{i,j}-v(F)-|F|\ge \half d_{i,j}$, so the total the number of ways of completing this algorithm is at least $\Om(|\c{G}|\prod_{i,j} d_{i,j}^{c_{i,j}})$.   Moreover, since each $F^{(3)}\in \c{H}$ arises in at most $O(1)$ ways form this procedure, we conclude that $|\c{H}|$ has the desired size. 
		
		Now consider any $\sig\sub H$.  If $|h\cap h'|>1$ for some distinct $h,h'\in \sig$, then $\deg_{\c{H}}(\sig)=0$ (since $\sig$ can not be the subset of any expansion $F^{(3)}$), so we can assume this is not the case.  Observe that every copy of $F^{(3)}\in \c{H}$ containing $\sig$ arose 
		by starting with some $F'\in \c{G}$ which contained a graph edge from each $h\in \sig$ (i.e.\ which contained $\sig'\sub F'$ for some $\sig'\in \proj(\sig)$), then extending each of these graph edges to form $\sig$, and then extending the remaining graph edges $F'\sm \sig'$ in an arbitrary way.  Thus one can specify each copy of $F^{(3)}$ containing $\sig$ through the following steps:
		\begin{enumerate}
			\item[(a)] Choose some $\sig'\in \proj(\sig)$.
			\item[(b)] Choose some $F'\in \c{G}$ containing $\sig'$ (which can be done in at most $\deg_{\c{G}}(\sig')$ ways).
			\item[(c)] Choose extensions of each edge in $F\sm \sig'$ (which can be done in at most $\prod_{i,j} D_{i,j}^{c_{i,j}-|\sig'\cap G_{i,j}|}$ ways).
		\end{enumerate}
		Multiplying these number of choices  from (b) and (c) together and then summing over all possible choices of $\sig'$ in (a) gives the desired degree bound, finishing the proof.
	\end{proof}
	
	\subsection{Auxiliary Lemmas}
	
	We will make use of the following basic arithmetic lemma.
	\begin{lem}\label{lem:optimize}
		Let $\pi,A,B$ be real numbers and $s,t\ge 1$ integers satisfying $A\ge B$ and $\pi\ge B^{-1},(A^{s-1}B^{t-1})^{\frac{-1}{st-1}}$.  Then 
		\[\max_{1\le a\le s,\ 1\le b\le t}\pi^{1-ab} A^{1-a}B^{1-b}\le 1.\]
	\end{lem}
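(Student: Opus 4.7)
The plan is to take logarithms and exploit a bilinear structure. Setting $x=\log A$, $y=\log B$, $z=\log\pi$, the two hypotheses $\pi\ge B^{-1}$ and $\pi\ge(A^{s-1}B^{t-1})^{-1/(st-1)}$ translate to
\[
z+y\ge 0 \quad\text{and}\quad (st-1)z+(s-1)x+(t-1)y\ge 0,
\]
and $A\ge B$ gives $x\ge y$. The inequality to prove, after taking $\log$ and negating, becomes
\[
g(a,b):=(ab-1)z+(a-1)x+(b-1)y\ \ge\ 0 \quad \text{for all } 1\le a\le s,\ 1\le b\le t.
\]

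The key observation is that $g(a,b)$ is bilinear in $(a,b)$: it is affine in $a$ for each fixed $b$ and vice versa. Consequently, its minimum over the rectangle $[1,s]\times[1,t]$ is attained at one of the four corners, so it suffices to check nonnegativity there. Directly: $g(1,1)=0$; $g(1,t)=(t-1)(z+y)\ge 0$ by the first hypothesis; $g(s,1)=(s-1)(z+x)\ge (s-1)(z+y)\ge 0$ since $x\ge y$; and $g(s,t)=(st-1)z+(s-1)x+(t-1)y\ge 0$ is exactly the second hypothesis. Thus the minimum of $g$ on the rectangle is nonnegative, which proves the claim (and in fact gives the stronger statement for all real $a\in[1,s],b\in[1,t]$, not merely integers).

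The main conceptual step is recognizing the bilinear structure of $g$; once spotted, the three given hypotheses line up essentially one-for-one with the three nontrivial corners, so no further calculation is needed. There are no expected obstacles beyond this initial reformulation — the argument is essentially a two-variable analogue of the trivial observation that a linear function on an interval attains its extremes at the endpoints.
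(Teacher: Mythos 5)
Your proof is correct and relies on the same underlying idea as the paper's: the exponent is affine in each of $a,b$ separately, so the extremum over the rectangle $[1,s]\times[1,t]$ is attained at a corner, and the three hypotheses handle the three non-trivial corners. The paper carries this out multiplicatively (rewriting as $(\pi A)^{1-a}(\pi^a B)^{1-b}$, then pushing $b$ to $t$ and $a$ to $1$ or $s$ by monotonicity), whereas your logarithmic reformulation makes the bilinear-on-a-box structure explicit and is arguably cleaner, but the mathematical content is the same.
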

	\begin{proof}
		Fix some $a,b$ achieving this maximum, and observe that the quantity we wish to upper bound can be rewritten as \[(\pi A)^{1-a} (\pi^a B)^{1-b}.\]  Since $\pi A\ge 1$ (because $\pi\ge B^{-1}\ge A^{-1}$), and since $a,b\ge 1$, we see that this quantity will be at most 1 unless $\pi^a B<1$.  We may thus assume this holds, in which case the expression above is maximized when $b\le t$ is as large as possible, giving an upper bound of
		\[(\pi A)^{1-a} (\pi^a B)^{1-t}=B^{1-t}\pi A(\pi^t A)^{-a}.\]
		Observe that this quantity is maximized at either $a=1$ or $a=s$.  In the former case we end up with $(\pi B)^{1-t}\le 1$ (since $\pi \ge B^{-1}$), and in the latter case we end up with $\pi^{1-st} B^{1-t} A^{1-s}\le 1$ (since $\pi \ge (A^{s-1}B^{t-1})^{-1/(st-1)}$). We conclude the result.
	\end{proof}
	
	Finally, we will need the following slightly weaker version of a basic regularization result from \cite{nie2024random}.
	\begin{lem}[Lemma 2.2,~\cite{nie2024random}]\label{lem:regularize}
		If $H$ is an $n$-vertex $3$-partite $3$-graph on $V_1\cup V_2\cup V_3$, then there exists a subgraph $H'\sub H$ and a real-valued vector $\Delta$ indexed by $2^{[3]}$ such that $|H'|\ge  |H|\log(n)^{-8}$ and such that for any set $I\sub [3]$, every $|I|$-element subset $S$ of $\bigcup_{i\in I} V_i$ which is contained in an edge of $H'$ satisfies \[\Delta_I(12 \log n)^{-8} \le \deg_{H'}(S)\le \Delta_I.\]
	\end{lem}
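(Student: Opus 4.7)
The plan is a two-step regularization: a pigeonhole bucketing on degrees in $H$, followed by an iterative cleanup that passes control from $\deg_H$ to $\deg_{H'}$.

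First I would bucket by $\deg_H$. For each nonempty $I \sub [3]$, each $|I|$-subset $S \sub \bigcup_{i \in I} V_i$ contained in some edge of $H$ has $\deg_H(S) \in [1, n^2]$, so the values $\lfloor \log_2 \deg_H(S) \rfloor$ fall into $O(\log n)$ buckets. Since $H$ is $3$-partite, each edge $h$ contains for each nonempty $I$ a unique subset $S_I(h)\sub h\cap \bigcup_{i\in I}V_i$, giving $h$ a bucket signature of length $7$. By pigeonhole some signature is shared by at least $|H|(\log n)^{-7}$ edges; let $H^{(1)}$ consist of these edges, set $\Delta_I = 2^{b_I+1}$ at the top of the $I$th chosen bucket, and set $\Delta_\emptyset = |H^{(1)}|$. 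Every $S$ appearing in an edge of $H^{(1)}$ then satisfies $\Delta_I/2 \le \deg_H(S) \le \Delta_I$, which gives the upper bound $\deg_{H'}(S) \le \Delta_I$ for any subgraph $H' \sub H^{(1)}$.

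Next I would restore the lower bound by an iterative cleanup: as long as there exists some $I$ and some $S$ with current degree below $\Delta_I(12\log n)^{-8}$, remove all edges of the current subgraph containing $S$; let $H'$ be the stable result. Each cleanup step is triggered by a distinct $S$ (since it empties the degree at that $S$) and removes fewer than $\Delta_I(12\log n)^{-8}$ edges, so the total number of edges lost is at most $\sum_{I\ne\emptyset}|\partial^{|I|}H^{(1)}|\,\Delta_I\,(12\log n)^{-8}$. Double counting gives $|\partial^{|I|}H^{(1)}|\,\Delta_I \le 2\sum_{S\in\partial^{|I|}H^{(1)}}\deg_H(S) \le 2|H|$, so at most $14|H|(12\log n)^{-8}$ edges are lost---less than $|H^{(1)}|/2$ for $n$ sufficiently large. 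Hence $|H'|\ge |H^{(1)}|/2 \ge |H|(\log n)^{-8}$, and the termination condition yields $\deg_{H'}(S) \ge \Delta_I(12\log n)^{-8}$ for every surviving $S$.

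The hard part is the transition between $\deg_H$ (controlled by the bucketing) and $\deg_{H'}$ (what the lemma demands), since Step 1 cannot control the latter directly. The iterative cleanup is what bridges this gap, and the constants in the statement are tight in the following sense: pigeonhole in Step 1 loses $(\log n)^7$, leaving exactly one extra $\log n$ factor as slack, which is precisely what the $(12\log n)^{-8}$ threshold consumes so that the cleanup removes only a constant fraction of $H^{(1)}$.
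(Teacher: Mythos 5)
Your argument is correct and is the standard dyadic-bucketing-plus-iterative-cleanup proof for this type of regularization lemma. The paper does not itself prove the statement (it cites Lemma 2.2 of \cite{nie2024random}), but your two-phase route — pigeonhole on the length-$7$ bucket signature of each edge to freeze $\deg_H(S)$ up to a factor $2$, then a cleanup loop that converts control of $\deg_H$ into control of $\deg_{H'}$ by repeatedly deleting all edges through any light $S$, with a double-counting bound $|\partial^{|I|}H^{(1)}|\,\Delta_I\le 2|H|$ showing the loop removes fewer than half the edges — is the expected one. One minor bookkeeping note: the pigeonhole in your first step actually gives $|H^{(1)}|\ge |H|/(c\log n)^7$ for a constant $c>1$ rather than $|H|(\log n)^{-7}$ exactly (there are roughly $2\log_2 n$ dyadic buckets per coordinate, and the $I=[3]$ coordinate of the signature is in fact trivial), so the conclusion $|H'|\ge |H|(\log n)^{-8}$ requires $n$ sufficiently large to absorb the constants. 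This is harmless in context — the paper operates in that regime throughout, and the slack baked into the $(12\log n)^{-8}$ threshold is there precisely to leave room for such constants.
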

	\section{Proof of the Main Result}
	In this section we use the results of the previous section to prove \Cref{thm:Kst}.  We begin by establishing our main technical supersaturation result \Cref{prop:technicalSupersaturation}, after which we optimize its parameters in \Cref{cor:supersaturation} before finally using this result together with the general results from \cite{nie2024random} on random Tur\'an problems for expansions to finish the proof of \Cref{thm:Kst}.
	
	\subsection{Main Supersaturation Result}
	The main technical supersaturation result we need is the following.  We note that the first part of this statement immediately implies \Cref{prop:vanillaSupersaturation} by taking $\ell=k_0$, $k=m n^{-3+3/s}$, and by looking at the range $k\ge n^{1/st}$ so that the minimum equals the first term.
	
	\begin{prop}\label{prop:technicalSupersaturation}
		For all $t\ge s\ge 3$, there exists a sufficiently large constant $k_0$ such that the following holds: if $H$ is an $n$-vertex 3-graph with $kn^{3-3/s}$ hyperedges such that $k\ge k_0 (12\log n)^{100}$ and if  $\ell$ is a real number such that $k_0\le \ell \le k^{1/3}$, then there exists a collection $\c{H}$ of copies of expansions $K_{s,t}^{(3)}$ in $H$ with 
		\[|\c{H}|=\Om^*(\min\{k^{st}n^{s+(s-2)t},\ell^{-(s-1)(3t-1)}k^{(2s-1)t} n^{s-1+1/s+(s-2)t}\}).\]
		Moreover, if $\ell\ge k^{1/2}n^{-1/s}$
		and  
		\[\tau:= \max\{\ell^{-1},\ell^{\frac{3t-1}{st-1}}k^{\frac{-t}{st-1}}n^{\frac{-(s-2)}{s(st-1)}},n^{\frac{-(s-1)}{s(st-1)}}\}^{s-1}\cdot n^{2-1/s},\]
		then we can further guarantee that $\c{H}$ satisfies  \[(\tau/|H|)^{1-|\sig|} \deg_{\c{H}}(\sig)=O^*\left(\frac{|\c{H}|}{v(\c{H})}\right)\textrm{ for all }\sig\sub V(\c{H})=E(H).\]
		
	\end{prop}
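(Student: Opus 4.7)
The overall strategy is the one sketched in \Cref{sub:sketch}: build a large, spread-out family of copies of $K_{s,t}^{(3)}$ in $H$ by first finding many copies of $K_{s,t}$ in the shadow graph $\partial^2 H$ (by either \Cref{prop:dense} or \Cref{prop:sparse}) and then extending them to the $3$-uniform setting via \Cref{lem:translate}.  We begin with a pair of standard reductions.  First, by passing to a uniformly random $3$-coloring of $V(H)$, we may at a cost of a constant factor assume that $H$ is $3$-partite with parts $V_1\cup V_2\cup V_3$; and second, by applying \Cref{lem:regularize} we may further assume after losing a $(\log n)^{-8}$ factor in $|H|$ that $H$ carries degree parameters $\Delta_I$ for each $I\sub[3]$, so that the pair-degrees $D_{i,j}\ge d_{i,j}$ appearing in \Cref{lem:translate} agree up to $(\log n)^{O(1)}$ factors and all pair-shadow counts $m_{i,j}$ are well-defined.

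With this regularization in hand, set $L_{i,j}:=m_{i,j}|V_1|^{-(2-1/s)}$.  We case on whether $\partial^2 H$ is \emph{dense} (some $L_{i,j}\ge \ell$) or \emph{sparse} (all $L_{i,j}< \ell$).  In the dense case, apply \Cref{prop:dense} to the bipartite shadow graph between the relevant pair $V_i\cup V_j$, yielding $|\c{G}|=\Omega(L_{i,j}^{st}|V_1|^s)$ copies of $K_{s,t}$ lying entirely in $V_i\cup V_j$.  Running these through \Cref{lem:translate} with $c_{i,j}=st$ and $d_{i,j}\approx kn^{1-2/s}/L_{i,j}$ yields $|\c{H}|=\Omega^*((L_{i,j}d_{i,j})^{st}|V_1|^s)=\Omega^*(k^{st}n^{s+(s-2)t})$, which is the first term in the claimed minimum.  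In the sparse case, verify that conditions (a)--(f) of \Cref{prop:sparse} hold (each is an immediate consequence of the regularization together with the upper bound $L_{i,j}<\ell$ and the choice $K:=k(12\log n)^{-16}$), obtain $|\c{G}|=\Omega^*(\ell^{s-1}n^{s-1+1/s}(\ell^{3-2s}K^{s-1})^t)$ with each copy having $1$ vertex in $V_1$, $s-1$ in $V_2$, and $t$ in $V_3$, and apply \Cref{lem:translate} with $c_{1,3}=t$, $c_{2,3}=(s-1)t$.  A direct exponent computation using $d_{i,j}\approx kn^{1-2/s}/L_{i,j}\ge kn^{1-2/s}/\ell$ then produces the second term $\Omega^*(\ell^{-(s-1)(3t-1)}k^{(2s-1)t}n^{s-1+1/s+(s-2)t})$.

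For the moreover statement, apply the \Cref{lem:translate} bound $\deg_{\c{H}}(\sig)\le \sum_{\sig'\in\proj(\sig)}\deg_{\c{G}}(\sig')\prod_{i,j}D_{i,j}^{c_{i,j}-|\sig'\cap G_{i,j}|}$.  Since $|\proj(\sig)|\le 3^{|\sig|}$, it suffices to bound each summand; fix such a $\sig'$ and let $a=|\sig'_v\cap(V_1\cup V_2)|$, $b=|\sig'_v\cap V_3|$.  In the sparse case substitute the explicit formula for $\phi$ from \Cref{prop:sparse}; in the dense case use the Morris--Saxton bound on $\deg_{\c{G}}(\sig')$; in both cases factor out $|\c{H}|/|\c{G}|\cdot \prod D_{i,j}^{-|\sig'\cap G_{i,j}|}$ to rewrite the bound in the form $|\c{H}|/|H|\cdot \pi^{1-|\sig|}A^{1-a}B^{1-b}$ for appropriate $\pi,A,B$ depending on $\ell,k,n$.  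An application of \Cref{lem:optimize} then reduces the worst case to one of the three extremal choices $(a,b)\in\{(1,1),(1,t),(s,t)\}$, and each of these three extremes is precisely one of the three terms inside the $\max$ defining $\tau$: the $\ell^{-1}$ term corresponds to a single pair-codegree, the middle term to the genuinely ``fractional'' case controlled by the $1/(st-1)$ exponent, and the $n^{-(s-1)/(s(st-1))}$ term to the single-vertex codegree.  The extra hypothesis $\ell\ge k^{1/2}n^{-1/s}$ is what allows the lower bounds on $\pi$ needed to apply \Cref{lem:optimize} to hold.

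\textbf{Main obstacle.}  The cardinality half of the proposition is essentially bookkeeping once \Cref{prop:sparse} is in hand, so the real difficulty is the spread bound.  The delicate points are (i) checking that when one unpacks $\phi(\sig'_v)$ and multiplies by $\prod D_{i,j}^{c_{i,j}-|\sig'\cap G_{i,j}|}$ the exponents collapse into exactly the form $A^{1-a}B^{1-b}\pi^{1-|\sig|}$ required by \Cref{lem:optimize}, and (ii) showing that the three extremal $(a,b)$ cases produced by that lemma match the three entries of the max defining $\tau$.  This bookkeeping has to be done carefully because $\phi$ itself has different branches depending on whether $\sig'$ meets $V_1$, and because one has to handle projections $\sig'$ whose vertices may land outside the region $V_1\cup V_2\cup V_3$ prescribed for $\c{G}$, in which case one relies on the $\sum_{\sig'\in\proj(\sig)}$ sum absorbing these cases into a common bound.
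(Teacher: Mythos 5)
Your proposal matches the paper's argument essentially step for step: reduce to the 3-partite, regularized setting via \Cref{lem:regularize}, split on whether some shadow density $m_{i,j}/|V_1|^{2-1/s}$ exceeds $\ell$, invoke \Cref{prop:dense} or \Cref{prop:sparse} accordingly, lift via \Cref{lem:translate}, and close the codegree bound with \Cref{lem:optimize}. The only slight misattribution is your claim that $\ell\ge k^{1/2}n^{-1/s}$ is needed for the lower bounds on $\pi$; in the paper it is precisely what guarantees $A\ge B$ (i.e.\ $\ell^{-1}kn^{1-2/s}\ge \ell^{3-2s}k^{s-1}$) when applying \Cref{lem:optimize} in the sparse branch.
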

	
	\begin{proof}
		We prove the result only in the case when $H$ is 3-partite, the general case following from the 3-partite case and the fact that every 3-graph contains a 3-partite subgraph with a constant proportion of its edges.  Let $H',\Delta$ be the subgraph and vector guaranteed by \Cref{lem:regularize}, and for ease of notation, we will typically write e.g.\ $\Delta_{1,2}$ instead of $\Delta_{\{1,2\}}$.  Let $G_{i,j}$ denote the shadow graph of $H'$ restricted to $V_i\cup V_j$ and let $m_{i,j}=|G_{i,j}|$.  
		
		In order to apply \Cref{lem:translate}, we will need the following observation.
		\begin{claim}\label{cl:translate}
			Let $d_{i,j}= kn^{3-3/s} m_{i,j}^{-1}(12 \log n)^{-16}$ and $D_{i,j}= kn^{3-3/s} m_{i,j}^{-1}(12 \log n)^{8}$. Then $D_{i,j}\ge d_{i,j} \ge 2(s+t+st)$ and every $e\in G_{i,j}$ satisfies $d_{i,j}\le \deg_{H'}(e)\le D_{i,j}$.
		\end{claim}
		\begin{proof}
			Since $m_{i,j}\le n^2$ and $s\ge 3$, we have 
			\[D_{i,j}\ge d_{i,j}\ge  kn^{1-3/s}(12 \log n)^{-16}\ge  k(12 \log n)^{-16}\ge 2(s+t+st),\]
			where the last step used that $k$ is at least a large constant times $(12\log n)^{16}$.  This establishes the first part of the claim.
			
			For the second part, observe that by definition of $H'$ and $\Delta$, each $e\in G_{i,j}$ has \begin{equation}\Delta_{i,j}(12 \log n)^{-8} \le \deg_{H'}(e)\le \Delta_{i,j}.\label{eq:H'Degree}\end{equation} Thus it suffices to prove 
			\begin{equation}kn^{3-3/s} m_{i,j}^{-1}(12 \log n)^{-8} \le \Delta_{i,j}\le kn^{3-3/s} m_{i,j}^{-1} (12\log n)^{8}.\label{eq:basicRegularize}\end{equation}
			To this end, by the upper bound of \eqref{eq:H'Degree} and the properties of $H'$ we have
			\[m_{i,j}\Delta_{i,j}\ge \sum_{e\in G_{i,j}} \deg_{H'}(e)= |H'|\ge kn^{3-3/s}(12 \log n)^{-8} ,\]
			and similarly
			\[m_{i,j}(12 \log n)^{-8}\Delta_{i,j}\le \sum_{e\in G_{i,j}} \deg_{H'}(e)= |H'|\le kn^{3-3/s},\]
			from which \eqref{eq:basicRegularize} follows, proving the result.
		\end{proof}
		We note for later that an essentially identical argument as above shows that for all $i$, 
		\begin{equation}
			\deg_{H'}(v)\le kn^{3-3/s}|V_i|^{-1}(12 \log n)^{8}\ \ \ \forall v\in V_i.\label{eq:basicRegularization2}
		\end{equation}
		Without loss of generality, we will assume from now on that \begin{equation}|V_1|\ge |V_2|\ge |V_3|.\label{eq:relative}\end{equation}  We now break up our argument into two parts based on whether the shadow  of $H'$ is dense or not.  
		
		\textbf{The Dense Case}.  Here we consider the case that $m_{i,j}\ge \ell |V_i|^{2-1/s}$ for some $i<j$, say with \[m_{i,j}=L |V_i|^{2-1/s}\] for some $L\ge \ell$.  If $k_0$ (and hence $\ell$, and hence $L$) is sufficiently large, then in view of \eqref{eq:relative} we can apply \Cref{prop:dense} to find a collection $\c{G}$ of $K_{s,t}$ in $G_{i,j}$ with 
		\begin{equation}|\c{G}|=\Om(L^{st}|V_i|^s)=\Om(m_{i,j}^{st} |V_i|^{s+t-2st}),\label{eq:denseGsize}\end{equation}
		and such that every $\sig'\sub E(G_{i,j})$ with $|\sig'_v\cap V_j|=a\ge 1$ and $|\sig_v'\cap V_i|=b\ge 1$ satisfies \begin{equation}\deg_{\c{G}}(\sig')= O(( L |V_i|^{1-1/s})^{s-a}( L^s)^{t-b}).\label{eq:denseGcodegree}\end{equation}
		
		By \Cref{cl:translate} and the properties of $\c{G}$ above, we can apply \Cref{lem:translate} with $c_{i,j}=st$ to give a collection $\c{H}$ of $K_{s,t}^{(3)}$'s in $H$.  We aim to show that $\c{H}$ satisfies our desired conditions.  In what follows, we define
		\[M:=k^{st} n^{3st-3t}|V_i|^{s+t-2st}\cdot k^{-1} n^{-3+3/s}.\]\begin{claim}\label{cl:denseHSize}
			The collection $\c{H}$ satisfies
			\[\frac{|\c{H}|}{v(\c{H})}=\Om^*(M),\]
			and 
			\[|\c{H}|=\Om^*(k^{st}n^{s+(s-2)t}).\]
		\end{claim}
		\begin{proof}
			Recall that $d_{i,j}=\Om^*( kn^{3-3/s} m_{i,j}^{-1})$.  By \Cref{lem:translate} and \eqref{eq:denseGsize}, we find
			\begin{align}|\c{H}|&=\Om(m_{i,j}^{st} |V_i|^{s+t-2st}\cdot d_{i,j}^{st})=\Om^*(|V_i|^{s+t-2st}(kn^{3-3/s})^{st})\nonumber\\&=\Om^*(k^{st} n^{3st-3t}|V_i|^{s+t-2st})\label{eq:denseHSize},\end{align}
			and combining this with $v(\c{H})=|H'|\le kn^{3-3/s}$ gives the first bound. The second bound follows from \eqref{eq:denseHSize} and $|V_i|\le n$, which we can apply since the exponent of $|V_i|$ in \eqref{eq:denseHSize} is negative.
		\end{proof}
		It remains now to establish the desired codegree bound\footnote{We only need to prove our codegree bound when we are given the additional hypothesis $\ell\ge k^{1/2}n^{-1/s}$, but this hypothesis will not be needed in the dense case of the argument.} for $\c{H}$. We begin with the following.
		\begin{claim}\label{cl:denseHcodegree}
			If $\sig \sub H$ intersects $V_j$ in $a\ge 1$ vertices and $V_i$ in $b\ge 1$ vertices, then
			\[\deg_{\c{H}}(\sig)=O^*\left((\ell^{-1} k n^{1-2/s})^{1-|\sig|} (\ell n^{1-1/s})^{1-a} (\ell^s)^{1-b}\cdot M\right).\]
		\end{claim}   
		\begin{proof}
			Observe that any $\sig'\in \proj(\sig)$ which does not intersect $V_j,V_i$ in exactly $a,b$ vertices respectively will have $\deg_{\c{G}}(\sig')=0$ (since every edge in a copy of $\c{G}$ lies in $V_i\cup V_j$).  Thus by \Cref{lem:translate} and \eqref{eq:denseGcodegree}, we find
			\begin{align*}\deg_{\c{H}}(\sig)&=O\left( ( L |V_i|^{1-1/s})^{s-a}( L^s)^{t-b}\cdot D_{i,j}^{st-|\sig|}\right)\\ &= O^*\left((L |V_i|^{1-1/s})^{s-a}(L^s)^{t-b}\cdot (kn^{3-3/s} L^{-1}|V_i|^{-2+1/s})^{st-|\sig|}\right)\\ &=O^*\left(L^{s-a+st-sb-st+|\sig|}  |V_i|^{s-1-(1-1/s)a-2st+t+(2-1/s)|\sig|}k^{st-|\sig|}n^{3st-3t-(3-3/s)|\sig|})\right)\\ &= O^*\left( L^{s-a+|\sig|-sb} |V_i|^{-1-(1-1/s)a+(2-1/s)|\sig|}k^{1-|\sig|}n^{-(3-3/s)(|\sig|-1)}\cdot M\right)\\ &= O^*\left(  L^{s-a+|\sig|-sb}  |V_i|^{(2-1/s)(|\sig|-1)-(1-1/s)(a-1)}k^{1-|\sig|}n^{-(3-3/s)(|\sig|-1)}\cdot M\right),\end{align*}
			with this second equality using $D_{i,j}=O^*(kn^{3-3/s}\cdot m_{i,j}^{-1})=O^*(kn^{3-3/s} \cdot L^{-1} |V_i|^{-2+1/s})$ and the fourth equality using $M=k^{st-1} n^{3st-3t-3+3/s}|V_i|^{s+t-2st}$.  Using that $|\sig|\le ab$ (since $\sig$ intersects $V_j,V_i$ in $a,b$ vertices), we see that the exponent of $L$ above
			equals \[s-a+|\sig|-sb\le(s-a)(1-b)\le 0.\] Similarly the exponent for $|V_i|$ is non-negative since $|\sig|\ge a$.  Thus the expression above is maximized when $L\ge \ell$ is as small as possible and $|V_i|\le n$ is as large as possible, and plugging these values in and rearranging slightly yields  
			
			\[\deg_{\c{H}}(\sig)=O^*(\ell^{(|\sig|-1)+(1-a)+s(1-b)} k^{1-|\sig|} n^{-(1-2/s)(|\sig|-1)-(1-1/s)(a-1)}\cdot M),\]
			equalling the desired bound.
		\end{proof}
		Now consider any $\sig \sub H$ which intersects $V_j$ in $a\ge 1$ vertices and $V_i$ in $b\ge 1$ and define
		\[\pi:= \max\{\ell^{-s+1},n^{\frac{-(s-1)^2}{s(st-1)}}\}\cdot \ell^{-1}.\]
		Recalling that
		\[\tau= \max\{\ell^{-1},\ell^{\frac{3t-1}{st-1}}k^{\frac{-t}{st-1}}n^{\frac{-(s-2)}{s(st-1)}},n^{\frac{-(s-1)}{s(st-1)}}\}^{s-1}\cdot n^{2-1/s},\]
		we see that $\tau\ge \ell  n^{2-1/s} \pi$ (namely by dropping the middle term in the maximum of $\tau$).  This together with $|H|=kn^{3-3/s}$ and \Cref{cl:denseHcodegree} gives
		\begin{align*}(\tau/|H|)^{1-|\sig|} \cdot \deg_{\c{H}}(\sig)\le (\ell k^{-1} n^{-1+2/s}\pi )^{1-|\sig|} \cdot \deg_{\c{H}}(\sig)&=O^*(\pi^{1-|\sig|} (\ell n^{1-1/s})^{1-a} (\ell^s)^{1-b}\cdot M)\\ &=O^*(\pi^{1-ab} (\ell n^{1-1/s})^{1-a} (\ell^s)^{1-b}\cdot M),\end{align*}
		with this last step using $|\sig|\le ab$ and $\pi\le 1$. Note that if we can show  the expression above is at most $O^*(M)$, then by \Cref{cl:denseHSize} it will be at most $O^*(|\c{H}|/v(\c{H}))$, completing the proof in this case.  Thus, it suffices to show \[\pi^{1-ab} (\ell n^{1-1/s})^{1-a} (\ell^s)^{1-b}\le 1,\] and to do this, it will suffice to show that the conditions of \Cref{lem:optimize} are satisfied with $A=\ell n^{1-1/s}$ and $B=\ell^s$; i.e.\ to show $A\ge B$ and $\pi\ge B^{-1},(A^{s-1}B^{t-1})^{\frac{-1}{st-1}}$. 
		
		Observe that we have $\ell n^{1-1/s}\ge \ell^s$ since $\ell \le k^{1/3}\le n^{1/s}$ (this last inequality holding because $kn^{3-3/s}=|H|\le n^3$), we have $\pi\ge \ell^{-s}$ due to the $\ell^{-s+1}$ term in $\pi$'s maximum, and similarly we have 
		\[ (\ell^{s-1} n^{(s-1)^2/s}\cdot \ell^{st-s})^{\frac{-1}{st-1}}=\ell^{-1} n^{\frac{-(s-1)^2}{s(st-1)}}\le \pi.\]
		Thus \Cref{lem:optimize} applies, completing the proof in this case.
		
		\textbf{The Sparse Case}.  We now assume that  $m_{i,j}< \ell |V_i|^{2-1/s}$ for all $i<j$, which in particular implies $m_{i,j}\le \ell |V_1|^{2-1/s}$ for all $i,j$ in view of \eqref{eq:relative}.
		
		\begin{claim}
			$H'$ satisfies the conditions of \Cref{prop:sparse} with $K:=k (12\log n)^{-24}$ provided $k_0$ is sufficiently large.
		\end{claim}
		\begin{proof}
			Condition (a), saying $|V_i|\le n$, is trivial since $H'\sub H$ and $H$ has $n$ vertices.  Condition (b), saying $|H'|\ge K n^{3-3/s}(12\log n)^{16}$, follows from \Cref{lem:regularize} since this implies \[|H'|\ge  |H|(12\log n)^{-8}=K n^{3-3/s}(12\log n)^{16}.\]
			Condition (c), saying $m_{i,j}\le \ell |V_1|^{2-1/s}$, follows from definition of being in the sparse case. Condition (d), saying $\del^{-1}\le \ell \le \del K^{(s-1)/(2s-3)}$, follows from \[k_0\le \ell\le k^{1/3}\le k^{1/3}\cdot k^{1/6}(12\log n)^{-12}=K^{1/2}\le \del K^{(s-1)/(2s-3)},\]
			where the third and last inequality used the hypothesis $k\ge (12\log n)^{100}$ and that $n$ is sufficiently large.
			
			Condition (e), saying pairs in $V_i\cup V_j$ are contained in at most $K n^{3-3/s}m_{i,j}^{-1}(12 \log n)^{32}=k n^{3-3/s}m_{i,j}^{-1}(12 \log n)^{8}$ edges of $H'$, follows from \Cref{cl:translate}.   Similarly, for condition (f), saying that each vertex in $V_1$ has at most $\ell |V_1|^{1-1/s}(12 \log n)^8$ neighbors in $V_2$, we observe that \Cref{lem:regularize} together with \eqref{eq:basicRegularization2} and \Cref{cl:translate} implies each $v_1\in V_1$ has degree at most $kn^{3-3/s}|V_1|^{-1} (12 \log n)^{8}$ in $H'$ and that each $v_2\in V_2$ which is contained in an edge with $v_1$ is contained in at least $kn^{3-3/s} m_{1,2}^{-1}(12\log n)^{-8}$ edges of $H'$ with $v_1$, so in total the number of neighbors of each $v_1$ in $V_2$ can be at most $m_{1,2} |V_1|^{-1}(12 \log n)^{16}\le \ell |V_1|^{1-1/s}(12\log n)^{16}$, with this last inequality using that we are in the sparse case.
		\end{proof}
		Let $\c{G}$ be the collection of $K_{s,t}$'s guaranteed by \Cref{prop:sparse}.  By \Cref{cl:translate},  we can apply \Cref{lem:translate} to obtain a collection $\c{H}$ of expansions $K_{s,t}^{(3)}$ with $c_{1,3}=t$ and $c_{2,3}=(s-1)t$, so it remains to show that this collection satisfies the conditions of the theorem.  To this end, define
		\[\widetilde{M}:=\ell^{s-1} n^{s-1+1/s}\left(\ell^{3-2s}k^{2s-1} n^{3s-3} m_{1,2}^{-1} m_{2,3}^{-(s-1)}\right)^t\cdot k^{-1} n^{-3+3/s}.\]
		\begin{claim}\label{cl:sparseHSize}
			The collection $\c{H}$ satisfies
			\[\frac{|\c{H}|}{v(\c{H})}=\Om^*(\widetilde{M}),\]
			and 
			\[|\c{H}|=\Om^*(\ell^{-(s-1)(3t-1)}k^{(2s-1)t}n^{s-1+1/s+(s-2)t}).\]
		\end{claim}
		\begin{proof}    
			By using \Cref{lem:translate} together with $|\c{G}|=\Om^*(\ell^{s-1} n^{s-1+1/s}(\ell^{3-2s}K^{s-1})^t)$ from \Cref{prop:sparse}, $d_{i,j}=\Om^*(kn^{3-3/s} m_{i,j}^{-1})$, and $K=\Om^*(k)$; we find
			\begin{align}|\c{H}|&=\Om(|\c{G}|\cdot d_{1,3}^t d_{2,3}^{(s-1)t})=\Om^*(\ell^{s-1}n^{s-1+1/s}(\ell^{3-2s}K^{s-1})^t\cdot (kn^{3-3/s})^{st} m_{1,3}^{-t} m_{2,3}^{-(s-1)t})\nonumber\\ &=\Om^*(\ell^{s-1}n^{s-1+1/s}(\ell^{3-2s} k^{2s-1}n^{3s-3}m_{1,3}^{-1} m_{2,3}^{-(s-1)})^t)\label{eq:tempHBound},\end{align}
			giving the first bound after noting $v(\c{H})\le kn^{3-3/s}$. Using  \eqref{eq:tempHBound} and $m_{1,3} m_{2,3}^{s-1}\le (\ell n^{2-1/s})^s$ by hypothesis of being in the sparse case, we find
			\[|\c{H}|=\Om^*(\ell^{s-1}n^{s-1+1/s}(\ell^{3-3s}k^{2s-1}n^{3s-3-2s+1})^t)=\Om^*(\ell^{-(s-1)(3t-1)}k^{(2s-1)t}n^{s-1+1/s+(s-2)t}).\]
		\end{proof}
		With this it remains only to conclude our desired codegree bounds under the additional hypothesis $\ell\ge k^{1/2}n^{-1/s}$ for $\c{H}$, which we do by applying \Cref{lem:translate}.  For this we establish the following.
		\begin{claim}
			If $\sig\sub H$ and $\sig'\in \proj(\sig)$ with $a=|\sig'_v\cap  (V_1\cup V_2)|$ and $b=|\sig'_v\cap V_3|$, then
			\[ \deg_{\c{G}}(\sig')\prod D_{i,j}^{c_{i,j}-|\sig'\cap G_{i,j}|}=O^*\left( (\ell^{-1}k n^{1-2/s})^{1-|\sig'|}( \ell^{-1} kn^{1-2/s})^{1-a} (\ell^{3-2s}k^{s-1})^{1-b} \cdot \widetilde{M} \right).\]
		\end{claim}
		\begin{proof}
			If $a=0$ or $b=0$ then $\deg_{\c{G}}(\sig')=0$ (as every edge of each $K_{s,t}$ in $\c{G}$ intersects both $V_1\cup V_2$ and $V_3$), so we may assume $a,b\ge 1$.
			
			Consider the subcase that $\sig'$ intersects $V_1$, i.e.\ that $|\sig'\cap V_1|\ge 1$.   In this case using $c_{1,3}=t,\ c_{2,3}=(s-1)t$, and $D_{i,j}=O^*(kn^{3-3/s} m_{i,j}^{-1})$, we find \begin{align}\prod D_{i,j}^{c_{i,j}-|\sig'\cap G_{i,j}|}&= D_{1,3}^t D_{2,3}^{(s-1)t} \cdot \prod D_{i,j}^{-|\sig'\cap G_{i,j}|}\nonumber\\ &= O^*\left((kn^{3-3/s})^{st} m_{1,3}^{-t}m_{2,3}^{-(s-1)t} \cdot \prod D_{i,j}^{-|\sig'\cap G_{i,j}|}\right)\nonumber \\ &= O^*\left((kn^{3-3/s})^{st} m_{1,3}^{-t}m_{2,3}^{-(s-1)t} \cdot D_{1,3}^{-1} (k n^{3-3/s}\cdot \ell^{-1} n^{-2+1/s})^{1-|\sig'|}\right),\label{eq:DProduct}\end{align}
			where this last equality used that $|\sig'\cap G_{1,3}|\ge 1$ by the hypothesis of $|\sig'\cap V_1|,b\ge 1$ to guarantee a $D_{1,3}$ term in the product, and that \[D_{i,j}\ge k n^{3-3/s} \cdot m_{i,j}^{-1}\ge k n^{3-3/s}\cdot \ell^{-1} n^{-2+1/s}\] for all $i,j$ to bound the remaining terms of the product.   Again using $D_{1,3}\ge  k n^{3-3/s} m_{1,3}^{-1}$ in \eqref{eq:DProduct} gives in total that
			\[\prod D_{i,j}^{c_{i,j}-|\sig'\cap G_{i,j}|}=O^*\left(m_{1,3}\cdot m_{1,3}^{-t}m_{2,3}^{-(s-1)t} (kn^{3-3/s})^{st-1} (\ell^{-1}k n^{1-2/s})^{1-|\sig'|}\right).\]
			A nearly identical argument gives that if  $\sig'$ is disjoint from $V_1$ then
			\[\prod D_{i,j}^{c_{i,j}-|\sig'\cap G_{i,j}|}=O^*\left(  m_{2,3}\cdot m_{1,3}^{-t}m_{2,3}^{-(s-1)t} (kn^{3-3/s})^{st-1} (\ell^{-1}k n^{1-2/s})^{1-|\sig'|}\right).\]
			By \Cref{prop:sparse} we have
			\[\deg_{\c{G}}(\sig')\le \phi(\sig'_v)=\frac{\ell^{s-1} n^{s-1+1/s}(\ell^{3-2s}K^{s-1})^t}{\del m_{I,3}(\del \ell^{-1} K n^{1-2/s})^{a-1}(\del \ell^{3-2s} K^{s-1})^{b-1}}\]
			where $I=1$ if $\sig'$ intersects $V_1$ and $I=2$ otherwise.  Combining this with the bounds for $\prod D_{i,j}^{c_{i,j}-|\sig'\cap G_{i,j}|}$ above, we see that regardless of whether $\sig'$ intersects $V_1$, we have that $\deg_{\c{G}}(\sig')\cdot \prod D_{i,j}^{c_{i,j}-|\sig'\cap G_{i,j}|}$ is at most 
			\[O^*\left(\ell^{s-1} n^{s-1+1/s}(\ell^{3-2s}k^{s-1})^t (\ell^{-1} kn^{1-2/s})^{1-a} (\ell^{3-2s}k^{s-1})^{1-b}  \cdot  m_{1,2}^{-t}m_{2,3}^{-(s-1)t} (kn^{3-3/s})^{st-1} (\ell^{-1}k n^{1-2/s})^{1-|\sig'|}\right),\]
			and rearranging with respect to $\widetilde{M}=\ell^{s-1} n^{s-1+1/s}(\ell^{3-2s}k^{2s-1} n^{3s-3})^t m_{1,2}^{-t}m_{2,3}^{-(s-1)t}\cdot k^{-1} n^{-3+3/s}$ shows that the above equals $O^*( (\ell^{-1}k n^{1-2/s})^{1-|\sig'|}( \ell^{-1} kn^{1-2/s})^{1-a} (\ell^{3-2s}k^{s-1})^{1-b} \cdot \widetilde{M} )$ as desired.        \end{proof}
		Define \[\widetilde{\pi}:=\max\{\ell^{1-s},\ell^{\frac{(3t-1)(s-1)}{st-1}}k^{\frac{-t(s-1)}{st-1}} n^{\frac{-(s-2)(s-1)}{s(st-1)}}\}\ell^{-1},\] noting that $\tau\ge \ell  n^{2-1/s}\widetilde{\pi}$ since $\widetilde{\pi}$ has fewer terms in its maximum compared to $\tau$.  Using this together with \Cref{lem:translate} and the claim above, we see that for any $\sig$ intersecting $V_1\cup V_2$ in $a$ vertices and $V_3$ in $b$ vertices, we have 
		\[(\tau/|H|)^{1-|\sig|} \deg_{\c{H}}(\sig)=O^*(\widetilde{\pi}^{1-|\sig|}(\ell^{-1}kn^{1-2/s})^{1-a}(\ell^{3-2s}k^{s-1})^{1-b}\cdot \widetilde{M} )=O^*( \widetilde{\pi}^{1-ab}(\ell^{-1}kn^{1-2/s})^{1-a}(\ell^{3-2s}k^{s-1})^{1-b}\cdot \widetilde{M}),\]
		where here we used $|\sig'|=|\sig|$ for $\sig'\in \proj(\sig)$ and that $|\sig|\le ab$ and $\widetilde{\pi}\le 1$.   By \Cref{cl:sparseHSize}, to show that this is at most $O^*(|\c{H}|/v(\c{H}))$, it suffices to show it is at most $O^*(\widetilde{M})$, i.e. that \[\widetilde{\pi}^{1-ab}(\ell^{-1}kn^{1-2/s})^{1-a}(\ell^{3-2s}k^{s-1})^{1-b}\le 1.\]  For this, it suffices to show that the conditions of \Cref{lem:optimize} are met.  And indeed, we have $\ell^{-1}kn^{1-2/s}\ge \ell^{3-2s}k^{s-1}$ (as this is equivalent to $\ell\ge k^{1/2}n^{-1/s}$), that $\widetilde{\pi} \ge \ell^{2s-3} k^{1-s}$ (since $\widetilde{\pi} \ge \ell^{-s}$ and $\ell\le k^{1/3}$), and that 
		\begin{align*}&(\ell^{-s+1} k^{s-1}n^{\frac{-(s-2)(s-1)}{s}}\cdot \ell^{(3-2s)(t-1)}k^{(s-1)(t-1)})^{-1/(st-1)}\\=&\ell^{-1}\cdot \ell^{\frac{(3t-1)(s-1)}{st-1}} k^{\frac{-t(s-1)}{st-1}} n^{\frac{-(s-2)(s-1)}{s(st-1)}}\le \widetilde{\pi}.\end{align*}
		Thus \Cref{lem:optimize} applies, giving the desired result and completing the proof.
	\end{proof}
	
	\subsection{Optimizing the Parameters}
	Optimizing the choice of $\ell$ in \Cref{prop:technicalSupersaturation} gives the following.
	
	\begin{cor}\label{cor:supersaturation}
		For all $t\ge s\ge 3$, there exists a sufficiently large constant $K_0$ such that the following holds.  Let $H$ be an $n$-vertex 3-graph with $kn^{3-3/s}$ hyperedges such that $k\ge K_0 (\log n)^{100}$.  There exists a non-empty collection $\c{H}$ of copies of expansions $K_{s,t}^{(3)}$ in $H$ 
		such that 
		\[\tau=\max\l\{k^{-1/3},~k^{\frac{-t}{st+3t-2}}n^{\frac{-(s-2)}{s(st+3t-2)}},n^{\frac{-(s-1)}{s(st-1)}}\r\}^{s-1}\cdot n^{2-\frac{1}{s}}\]
		satisfies  $(\tau/|H|)^{1-|\sig|} \deg_{\c{H}}(\sig)=O^*\left(\frac{|\c{H}|}{v(\c{H})}\right)$ for all $\sig\sub V(\c{H})=E(H)$.
	\end{cor}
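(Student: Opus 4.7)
The plan is to derive \Cref{cor:supersaturation} from \Cref{prop:technicalSupersaturation} by optimizing the auxiliary parameter $\ell$. The two $\ell$-dependent terms in $\tau$ from \Cref{prop:technicalSupersaturation}, namely $\ell^{-1}$ and $\ell^{(3t-1)/(st-1)}k^{-t/(st-1)}n^{-(s-2)/(s(st-1))}$, are monotone in opposite directions in $\ell$ and coincide at $\ell^*:=k^{t/(st+3t-2)}n^{(s-2)/(s(st+3t-2))}$, where their common value is $(\ell^*)^{-1}=k^{-t/(st+3t-2)}n^{-(s-2)/(s(st+3t-2))}$. This common value is exactly the middle entry of the corollary's target $\tau$, while the $\ell$-free third term of the proposition already matches the third entry of the corollary's $\tau$. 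The natural strategy is therefore to take $\ell$ as close to $\ell^*$ as allowed by the admissible window $[k^{1/2}n^{-1/s},k^{1/3}]$ required by \Cref{prop:technicalSupersaturation} for the codegree bound; this window is non-empty thanks to the ambient bound $k\le n^{3/s}$ (a consequence of $|H|\le\binom{n}{3}$), and its left endpoint dominates the constant $k_0$ of \Cref{prop:technicalSupersaturation} once $K_0$ and $n$ are sufficiently large.

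Concretely, I set $\ell:=\min(k^{1/3},\max(k^{1/2}n^{-1/s},\ell^*))$ and apply \Cref{prop:technicalSupersaturation} to obtain a collection $\c{H}$ satisfying the spread inequality for some $\tau_{\mathrm{prop}}$. Since $\tau/|H|\le 1$ and $1-|\sig|\le 0$ for $|\sig|\ge 1$, replacing $\tau_{\mathrm{prop}}$ by a polylogarithmically larger $\tau_{\mathrm{cor}}$ can only weaken the spread inequality, so it suffices to establish $\tau_{\mathrm{prop}}=O^*(\tau_{\mathrm{cor}})$. I verify this by cases:
\begin{itemize}
    \item If $k^{1/2}n^{-1/s}\le\ell^*\le k^{1/3}$, then $\ell=\ell^*$ equates the first two terms of $\tau_{\mathrm{prop}}$ at $(\ell^*)^{-1}$, matching the corollary's second term.
    \item If $\ell^*>k^{1/3}$, then $\ell=k^{1/3}$ and the first term is $k^{-1/3}$, matching the corollary's first term. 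Rearranging $\ell^*>k^{1/3}$ yields $n>k^{s(st-2)/(3(s-2))}$, which is exactly the condition needed to show that the second term at $\ell=k^{1/3}$, namely $k^{-1/(3(st-1))}n^{-(s-2)/(s(st-1))}$, is likewise at most $k^{-1/3}$.
    \item If $\ell^*<k^{1/2}n^{-1/s}$, then $\ell=k^{1/2}n^{-1/s}$, and the first term $k^{-1/2}n^{1/s}$ is at most $(\ell^*)^{-1}$ by the defining inequality of this case, hence at most the corollary's second term. The second term equals $k^{(t-1)/(2(st-1))}n^{-(3t+s-3)/(s(st-1))}$; invoking $k\le n^{3/s}$ bounds this by the corollary's third term $n^{-(s-1)/(s(st-1))}$.
\end{itemize}

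I expect the main obstacle to be the third case. When $\ell^*$ is infeasibly small, moving $\ell$ upward to $k^{1/2}n^{-1/s}$ inflates the second term of $\tau_{\mathrm{prop}}$ by the factor $(k^{1/2}n^{-1/s}/\ell^*)^{(3t-1)/(st-1)}>1$. The non-obvious observation is that this inflation is absorbed neither by the first nor the second term of $\tau_{\mathrm{cor}}$ but by its third term $n^{-(s-1)/(s(st-1))}$; carrying out this absorption is the heart of the argument and is where the density constraint $k\le n^{3/s}$ is used essentially.
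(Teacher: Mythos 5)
Your proposal is correct and takes essentially the same route as the paper: both deduce the corollary from \Cref{prop:technicalSupersaturation} by choosing $\ell$ in the admissible window so that the $\ell$-dependent maximum $\mu$ is bounded by $\pi$, with the balancing point $\ell^*$ playing the central role. The only (harmless) difference is that your three cases are organized by where $\ell^*$ lands relative to the window $[\max\{k_0,k^{1/2}n^{-1/s}\},k^{1/3}]$, which lets you take $\ell=\ell^*$ throughout the middle range, whereas the paper splits that middle range at $k=n^{(4st-s-3t)/(st(st-1))}$ and uses $\ell=n^{(s-1)/(s(st-1))}$ for the larger $k$'s — a choice that also works but is not needed, so your version is if anything slightly cleaner; you should, however, fold $k_0$ explicitly into the $\max$ defining $\ell$ (or note case-by-case, as you partly do, that the chosen $\ell$ always exceeds $k_0$ once $K_0$ is large), since the left endpoint $k^{1/2}n^{-1/s}$ alone need not dominate $k_0$.
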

	
	\begin{proof}
		Given $k,\ell,n$, we define
		\[\mu:=\max\{\ell^{-1},\ell^{\frac{3t-1}{st-1}}k^{\frac{-t}{st-1}}n^{\frac{-(s-2)}{s(st-1)}},n^{\frac{-(s-1)}{s(st-1)}}\},\]
		and
		\[\pi:=\max\l\{k^{-1/3},~k^{\frac{-t}{st+3t-2}}n^{\frac{-(s-2)}{s(st+3t-2)}},n^{\frac{-(s-1)}{s(st-1)}}\r\}.\]
		\begin{claim}
			To prove the result, it suffices to show that for all $k\ge K_0 (\log n)^{100}$ that there exists an $\ell$ satisfying
			\begin{equation}\max\{k_0,k^{1/2}n^{-1/2s}\}\le \ell\le k^{1/3} \label{eq:ell}\end{equation}
			such that
			\[\mu\le \pi.\]
		\end{claim}
		\begin{proof}
			Consider some $k\ge K_0(\log n)^{100}$ and let $\ell$ be as guaranteed by the claim.  By \Cref{prop:technicalSupersaturation}, there exists a non-empty collection $\c{H}$ of copies of $K_{s,t}^{(3)}$ in $H$ such that 
			\[\widetilde{\tau}:=\mu^{s-1} n^{2-1/s}\le \pi^{s-1} n^{2-1/s}=\tau \]
			satisfies
			\[(\widetilde{\tau}/|H|)^{1-|\sig|} \deg_{\c{H}}(\sig)=O^*\left(\frac{|\c{H}|}{v(\c{H})}\right)\]
			for all $\sig \sub H$.  Since $\widetilde{\tau}\le \tau$, we have $\tau^{1-|\sig|}\le \widetilde{\tau}^{1-|\sig|}$, so the bound above continues to hold for $\tau$, proving the result.
		\end{proof}
		Given $k,n$, we will say that $\ell$ is \textit{valid} if it satisfies \eqref{eq:ell}.  It remains then to show for each $k\ge K_0(\log n)^{100}$ there exists a valid $\ell$ such that $\mu\le \pi$.
		
		\textbf{Intuition}.   We will always consider $\ell$ which are either at the cusp of being valid (i.e. $\ell=k^{1/2}n^{-1/2s}$ or $\ell=k^{1/3}$), or which are such that the first $\mu$ term $\ell^{-1}$ equals one of the latter two terms of $\mu$ (i.e. $\ell=k^{\frac{t}{st+3t-2}}n^{\frac{s-2}{s(st+3t-2)}}$ or $\ell=n^{\frac{s-1}{s(st-1)}}$).  
		
		In order to show our choices for $\ell$ are valid, we will need to verify inequalities of the form $f(k,n)\le g(k,n)$ for all $k\in [n^{\alpha},n^{\beta}]$ with $f,g$  polynomials in $k,n$.  We will typically do this by observing that, for example, if the exponent for $k$ in $f$ is larger than it is in $g$, then it suffices to verify this inequality at the largest possible value of $k=n^{\beta}$ (since the exponent condition implies this holds for all smaller values of $k$).  In fact, in most cases we will have $f(k,n)=g(k,n)$ at $k=n^{\beta}$, in particular giving the desired inequality.  In our proof we will often omit spelling out all of these details each time and only say things like ``because equality holds for the largest possible value of $k$.''

		To show $\mu\le \pi$, we will typically show that $\mu$ equals the maximum of one or two of the terms in the maximum used to define $\pi$ (and hence $\pi$ must be at least as large as $\mu$).
		
		\textbf{Formal Details}.  First consider $k\in [K_0(\log n)^{100}, n^{\frac{3(s-2)}{s(st-2)}}]$.  In this case we take $\ell=k^{1/3}$ (which is always valid for $k\le n^{3/s}$ provided $K_0$ is sufficiently large in terms of $k_0$), which yields
		\[\mu=\max\{k^{-1/3},k^{\frac{-1}{3(st-1)}}n^{\frac{-(s-2)}{s(st-1)}},n^{\frac{-(s-1)}{s(st-1)}}\}.\]
		Observe that $k^{-1/3}\ge k^{\frac{-1}{3(st-1)}}n^{\frac{-(s-2)}{s(st-1)}}$ for $k\le n^{\frac{3(s-2)}{s(st-2)}}$ (since this is equivalent to saying $n^{\f{s-2}{s(st-1)}}\ge k^{1/3-1/3(st-1)}=k^{\frac{st-2}{3(st-1)}}$ and equality holds when $k$ is the largest possible value).  Using this with the expression for $\mu$ above and the definition of $\pi$ gives
		\[\mu= \max\{k^{-1/3},n^{\frac{-(s-1)}{s(st-1)}}\}\le \pi,\]
		giving the desired result in this case.
		
		We next consider $k\in [n^{\frac{3(s-2)}{s(st-2)}}, n^{\frac{4st-s-3t}{st(st-1)}}]$, and here we take $\ell=k^{\frac{t}{st+3t-2}}n^{\frac{s-2}{s(st+3t-2)}}$.  We claim that this $\ell$ is valid.  Indeed, $\ell\ge k_0$ holds since $\ell$ is the product of two (large in terms of $K_0$) positive numbers raised to positive powers.  We have $k\ge \ell^3$ for $k\ge n^{\frac{3(s-2)}{s(st-2)}}$ (because this is equivalent to saying $n^{\frac{3(s-2)}{s(st+3t-2)}}\le k^{1-\frac{3t}{st+3t-2}}=k^{\frac{st-2}{st+3t-2}}$ with equality holding at the smallest possible value of $k$), and we have $\ell^2\ge k n^{-1/s}$ (since at the largest possible value of $k$ we have $k n^{-1/s}=n^{\frac{(4-t)st-s-2t}{st(st-1)}}\le 1\le \ell^2$ for $t\ge s\ge 3$).  Plugging in this value of $\ell$ gives
		\[\mu=\max\{k^{\frac{-t}{st+3t-2}}n^{\frac{-(s-2)}{s(st+3t-2)}},k^{\frac{-t}{st+3t-2}}n^{\frac{-(s-2)}{s(st+3t-2)}},n^{\frac{-(s-1)}{s(st-1)}}\}\le \pi,\]
		giving the desired result in this range.
		
		We next consider $k\in [n^{\frac{4st-s-3t}{st(st-1)}}, n^{\frac{st+2s-3}{s(st-1)}}]$ and take $\ell=n^{\frac{s-1}{s(st-1)}}$, noting that $\ell\ge k^{\frac{1}{2}}n^{-\frac{1}{2s}}$ since $k\le n^{\frac{st+2s-3}{st-1}}$ (because equality holds at the largest possible value of $k$), and that $k\ge \ell^3$ (as for the smallest possible value of $k$ this is equivalent to saying $\frac{4st-s-3t}{t}\ge \frac{3st-3t}{t}=3(s-1)$).  With this, we find
		\[\mu=\max\{n^{\frac{-(s-1)}{s(st-1)}},n^{\frac{(3t-1)(s-1)}{s(st-1)^2}}k^{\frac{-t}{st-1}}n^{\frac{-(s-2)}{s(st-1)}},n^{\frac{-(s-1)}{s(st-1)}}\}.\]
		Note that this first term is at least the second term for $k\ge n^{\frac{4st-s-3t}{st(st-1)}}$ (since it holds at the smallest possible value of $n^{\frac{4st-s-3t}{st(st-1)}}=n^{\frac{1}{st}+\frac{(3t-1)(s-1)}{st(st-1)}}$), giving the desired result.
		
		Finally we consider $k\in [n^{\frac{st+2s-3}{s(st-1)}}, n^{3/s}]$.  Here we take $\ell=k^{1/2} n^{-1/2s}$, which is easily seen to be valid (the condition $\ell\le k^{1/3}$ is equivalent to $k\le n^{3/s}$, and the condition $\ell\ge k_0$ is equivalent to $k$ being at least a large constant times $n^{1/s}$,  which follows from $st+2s-3> st-1$).  Here we have
		\[\mu=\max\{k^{-1/2} n^{1/2s},k^{\frac{t-1}{2(st-1)}}n^{\frac{-3t-2s+5}{2s(st-1)}},n^{\frac{-(s-1)}{s(st-1)}}\}.\]
		We have $n^{\frac{-(s-1)}{s(st-1)}}\ge k^{-1/2} n^{1/2s}$ for $k\ge n^{\frac{st+2s-3}{s(st-1)}}$ (with equality at the smallest possible value), and $n^{\frac{-(s-1)}{s(st-1)}}\ge k^{\frac{t-1}{2(st-1)}}n^{\frac{-3t-2s+5}{2s(st-1)}}$ for $k\le n^{3/s}$ (with equality holding at the largest possible value).  Thus $\mu=n^{\frac{-(s-1)}{s(t-1)}}\le \pi$, proving that $\mu\le \pi$ in all cases, finishing the proof.
	\end{proof}
	
	\subsection{Finishing the Proof}\label{sub:finish}
	
	To complete our proof, we recall some definitions and general results from \cite{nie2024random}.  We begin with a general bound for the random Tur\'an number, for which we recall that if $F$ is an $r$-graph $F$ with at least 2 edges, then we denote its \emph{$r$-density} by
	$$
	d_r(F):=\max_{F'\subseteq F,\ |F'|\ge 2}\l\{\frac{|F'|-1}{v(F')-r}\r\}.
	$$
	
	\begin{lem}[Proposition 2.5,~\cite{nie2024random}]\label{lem:generalLowerRandomTuran}
		Let $F$ be an $r$-graph with $\Delta(F)\ge 2$.  If $n^{-r}\ll p \ll n^{-\frac{1}{d_r(F)}}$, then a.a.s.
		\[\ex(G_{n,p}^r,F)=(1+o(1))p {n\choose r}.\]
		If $p\gg  n^{-\frac{1}{d_r(F)}}$, then a.a.s.
		\[\ex(G_{n,p}^r,F)\ge  \max\{\Omega(p\cdot \ex(n,F)),n^{r-\frac{1}{d_r(F)}}(\log n)^{-1}\}.\]
	\end{lem}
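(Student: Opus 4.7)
The plan is to handle the sub-threshold and super-threshold regimes separately.  In both cases the key device is to pick a subgraph $F^{*}\subseteq F$ with $|F^{*}|\ge 2$ that achieves the maximum in the definition of $d_r(F)$, so that $1/d_r(F)=(v(F^{*})-r)/(|F^{*}|-1)$; this is the right target because any $F^{*}$-free graph is automatically $F$-free, so it suffices to delete copies of $F^{*}$ rather than copies of $F$.  Note that $\Del(F)\ge 2$ implies $d_r(F)\ge 1/(r-1)>1/r$ (take any two incident edges as a witness subgraph), which makes the sub-threshold window $n^{-r}\ll p\ll n^{-1/d_r(F)}$ nonempty.

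For the sub-threshold range, a first-moment computation gives $\E[X_{F^{*}}]=\Theta(p^{|F^{*}|}n^{v(F^{*})})=\Theta(p\binom{n}{r})\cdot p^{|F^{*}|-1}n^{v(F^{*})-r}$, and the hypothesis $p\ll n^{-1/d_r(F)}=n^{-(v(F^{*})-r)/(|F^{*}|-1)}$ is exactly what forces the final factor to be $o(1)$.  Markov's inequality with a slowly growing constant then gives $X_{F^{*}}=o(p\binom{n}{r})$ a.a.s., so deleting one edge per copy of $F^{*}$ from $G_{n,p}^{r}$ leaves an $F$-free subgraph of size $(1-o(1))p\binom{n}{r}$, where Chernoff controls $|G_{n,p}^{r}|$ (legitimate since $pn^{r}\to\infty$).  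The matching upper bound $\ex(G_{n,p}^{r},F)\le|G_{n,p}^{r}|$ is trivial.

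For the super-threshold range $p\gg n^{-1/d_r(F)}$, the first lower bound $\Om(p\cdot\ex(n,F))$ follows by taking any extremal $F$-free $r$-graph $H^{*}$ on $[n]$ and intersecting it with $G_{n,p}^{r}$: the intersection is automatically $F$-free, and Chernoff concentrates its size at $p\cdot\ex(n,F)$ (the claim is trivial when $p\cdot\ex(n,F)$ is bounded).  For the second bound $n^{r-1/d_r(F)}(\log n)^{-1}$, I would set $q:=c(\log n)^{-1}n^{-1/d_r(F)}$ for a small constant $c$ and couple $G'\sim G_{n,q}^{r}$ inside $G_{n,p}^{r}$ by independently keeping each edge of $G_{n,p}^{r}$ with probability $q/p\le 1$.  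The calibration of $q$ gives $q^{|F^{*}|-1}n^{v(F^{*})-r}=(c/\log n)^{|F^{*}|-1}=o(1)$, so the expected number of copies of $F^{*}$ in $G'$ is an $o(1)$ fraction of $\E[|G'|]=\Theta(qn^{r})$; Markov and Chernoff then let us delete one edge per copy of $F^{*}$ and retain an $F$-free subgraph of $G_{n,p}^{r}$ of size $\Om(qn^{r})=\Om(n^{r-1/d_r(F)}(\log n)^{-1})$.

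The hardest piece is this last bound: $q$ must be chosen so that $G'$ is both sparse enough that $\E[X_{F^{*}}]=o(\E[|G'|])$ and dense enough that after deletion we still exceed the target $n^{r-1/d_r(F)}(\log n)^{-1}$, and simultaneous concentration for $|G'|$ and $X_{F^{*}}$ is needed for the deletion argument to go through.  The other pieces are routine once $F^{*}$ is fixed.
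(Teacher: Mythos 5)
This statement is quoted verbatim from \cite{nie2024random} (where it is Proposition~2.5), and the present paper gives no proof, so there is no in-text argument to compare against. Your proposal is a correct and entirely standard deletion-method argument: choosing $F^{*}\subseteq F$ to realize $d_r(F)$ is exactly what makes $n^{-1/d_r(F)}$ the right threshold for the first-moment calculation, deleting one edge per copy of $F^{*}$ (rather than per copy of $F$) is the key observation since $F^{*}$-freeness implies $F$-freeness, and sprinkling at density $q=\Theta\bigl((\log n)^{-1}n^{-1/d_r(F)}\bigr)$ matches the $(\log n)^{-1}$ loss in the super-threshold bound. This is almost certainly the same argument as in the cited source.
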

	
	We next state a general upper bound for random Tur\'an numbers, for which we need the following precise notion of what it means for an $r$-graph to have a balanced supersaturation result.
	\begin{defn}\label{def:balanced}
		Given positive functions $M=M(n),\ \gamma=\gamma(n)$, and $\tau=\tau(n,m)$, we say that an $r$-graph $F$ is \emph{$(M,\gamma,\tau)$-balanced} if for every $n$-vertex $m$-edge $r$-graph $H$ with $n$ sufficiently large and $m\ge M(n)$, there exists a non-empty collection $\c{H}$ of copies of $F$ in $H$ (which we view as an $|F|$-uniform hypergraph on $E(H)$) such that, for all integers $1\le i\le |F|$,
		\begin{equation}\label{equation:Delta}
			\Delta_i(\c{H})\le \frac{\gamma(n)|\c{H}|}{m}\l(\frac{\tau(n,m)}{m}\r)^{i-1}.        
		\end{equation}
	\end{defn}
	In other words, $F$ is $(M,\gamma,\tau)$-balanced if for every $r$-graph with sufficiently many vertices and a large number of edges (i.e.\ $m\ge M(n)$), we can find a collection of copies of $F$ which are spread out in the sense of \eqref{equation:Delta}.  This definition together with an application of the method of hypergraph containers~\cite{balogh2015independent,saxton2015hypergraph} gives the following. 
	
	\begin{prop}[Proposition 2.6,~\cite{nie2024random}]\label{Lemma:General Random Turan}
		Let $F$ be an $r$-graph with $r\ge2$.  If there exists a  $C>0$ and positive functions $M=M(n)$ and $\tau=\tau(n,m)$ such that 
		\begin{itemize}
			\item[(a)] $F$ is $(M,(\log n)^{C}, \tau)$-balanced, and
			\item[(b)] For all sufficiently large $n$ and $m\ge M(n)$, the function $\tau(n,m)$ is non-increasing with respect to $m$ and satisfies $\tau(n,m)\ge 1$, 
		\end{itemize}
		then there exists $C'\ge 0$ such that for all sufficiently large $n$, $m\ge M(n)$, and $0<p\le 1$ with $pm\rightarrow\infty$ as $n\rightarrow \infty$, we have a.a.s.
		$$
		\ex(G^r_{n,p},F)\le \max\l\{C'pm,\tau(n,m)(\log n)^{C'}\r\}.
		$$
	\end{prop}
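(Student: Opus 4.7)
The plan is to derive the proposition by iteratively applying the hypergraph container theorem (in the form of Saxton--Thomason or Balogh--Morris--Samotij) using the balanced supersaturation hypothesis (a) at each step. The key observation is that the degree bound \eqref{equation:Delta} on the auxiliary $|F|$-uniform hypergraph $\c{H}$ produced by $(M,(\log n)^C,\tau)$-balancedness is precisely in the form required as input to a single container step applied to $\c{H}$, once one regards an $F$-free subgraph of an $r$-graph $H$ with $|H|\ge M(n)$ as an independent set in $\c{H}$.

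Fix a target $m\ge M(n)$ and set $\tau:=\tau(n,m)$. I would start with the trivial container $H_0=K_n^{(r)}$ and iterate: at each stage, every current container $H$ with $|H|\ge m$ still satisfies $|H|\ge M(n)$, so one may apply (a) to $H$, feed the resulting $\c{H}$ into the container lemma, and replace $H$ by a family of subcontainers, each of size at most $(1-\delta)|H|$ for some absolute $\delta>0$ and each specified by a fingerprint in $E(H)$ of size $O(\tau(n,|H|)(\log n)^{O(1)})$. Since $|H|\ge m$ at every step, the monotonicity in hypothesis (b) gives $\tau(n,|H|)\le \tau$, and since each step shrinks the container by a constant factor, the process terminates after $O(\log n)$ rounds. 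The output is a family $\c{C}$ of subsets of $\binom{[n]}{r}$, each of size at most $m$, such that every $F$-free $r$-graph on $[n]$ is a subgraph of some $C\in\c{C}$, and
\[\log|\c{C}|\le O\!\l(\tau(\log n)^{C_1}\r)\]
for some absolute constant $C_1$.

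Finally, I would apply a Chernoff tail bound together with a union bound. For each fixed $C\in\c{C}$ the intersection $E(G_{n,p}^r)\cap C$ is binomial with mean at most $pm$, so for a sufficiently large constant $C'$,
\[\P\!\l[\,|E(G_{n,p}^r)\cap C|>\max\{C'pm,\ \tau(\log n)^{C'}\}\,\r]\le \exp\!\l(-\Om(\max\{pm,\tau(\log n)^{C'}\})\r),\]
using $pm\to\infty$ together with the standard bound $\P[\mathrm{Bin}(N,q)\ge t]\le e^{-t}$ valid for $t\gg Nq$. Taking $C'$ larger than $C_1$ ensures that this survives a union bound over all $C\in\c{C}$. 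Since every $F$-free subgraph of $G_{n,p}^r$ lies in $G_{n,p}^r\cap C$ for some $C\in\c{C}$, this yields $\ex(G^r_{n,p},F)\le \max\{C'pm,\tau(\log n)^{C'}\}$ a.a.s.

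The main technical obstacle will be the bookkeeping across the iterations: one must verify that the exact form of \eqref{equation:Delta} matches the spread hypothesis expected by the container lemma (so that each iteration delivers both the geometric size reduction and the fingerprint bound in terms of $\tau$ of the current size), and then use monotonicity of $\tau$ from (b) in a clean way to ensure the accumulated fingerprint size across $O(\log n)$ rounds remains $O(\tau(\log n)^{O(1)})$ rather than blowing up as intermediate containers shrink. Everything else is a routine invocation of Chernoff and the union bound.
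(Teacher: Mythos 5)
Your proposal is correct and follows essentially the same route as the paper: the result is obtained by iterating the hypergraph container lemma, using the $(M,(\log n)^C,\tau)$-balancedness at each step to supply the required degree conditions for the auxiliary $|F|$-uniform hypergraph, and then finishing with a Chernoff-plus-union-bound estimate over the container family. The paper itself does not reprove this proposition (it is cited from~\cite{nie2024random}, where it is established exactly via this container iteration), so your sketch matches the intended argument, including the key use of monotonicity of $\tau$ to control fingerprint sizes across rounds.
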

	
	The last ingredient we need is a ``lifting'' result which says that if some $r_0$-graph $F$ is sufficiently balanced, then so is every expansion $F^{(r)}$.
	
	\begin{thm}[Theorem 2.7,~\cite{nie2024random}]\label{lemma:BSviaSHADOW}
		Let $r>r_0\ge 2$ and let $F$ be an $r_0$-graph such that there exist positive functions $M_{r_0}=M_{r_0}(n)$, $\gamma_{r_0}=\gamma_{r_0}(n)$ and $\tau_{r_0}=\tau_{r_0}(n,m)$ with the following properties:
		\begin{itemize}
			\item [(a)] $F$ is $(M_{r_0}, \gamma_{r_0}, \tau_{r_0})$-balanced,
			\item[(b)] For sufficiently large $n$, the function $M_{r_0}(n)n^{-\frac{r-r_0}{r-1}}$ is non-decreasing with respect to $n$,
			\item[(c)] For sufficiently large $n$, the function  $\gamma_{r_0}(n)$ is non-decreasing with respect to $n$.
			\item [(d)] For sufficiently large $n$,  $m\ge M_{r_0}(n)$, and any real $x\ge 1$, the function $\tau_{r_0}(n,m)$ is non-increasing with respect to $m$ and $\tau_{r_0}(nx,mx^{\frac{r-r_0}{r-1}})$ is non-decreasing with respect to $x$.
		\end{itemize}
		Then there exists a sufficiently large constant $C$ depending only on $F$ and $r$ such that for the functions
		\begin{equation*}
			M_r(n):=\max\l\{M_{r_0}(n)^{\frac{r-1}{r_0-1}}n^{-\frac{r-r_0}{r_0-1}},~n^{r-1}\r\}(\log n)^{C},
		\end{equation*}
		
		\begin{equation*}
			\gamma_r(n):=\gamma_{r_0}(n)(\log n)^{C},
		\end{equation*}
		and
		\begin{equation*}
			\tau_r(n,m):=\tau_{r_0}\l(n,n^{\frac{r-r_0}{r-1}}m^{\frac{r_0-1}{r-1}}(\log n)^{-C}\r)(\log n)^{C},
		\end{equation*}
		we have that $F^{(r)}$ is $(M_r,\gamma_r,\tau_r)$-balanced.
	\end{thm}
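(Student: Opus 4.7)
The plan is to start with an $n$-vertex $r$-graph $H$ with $m\ge M_r(n)$ edges and construct the desired collection $\mathcal{H}$ of copies of $F^{(r)}$ in $H$ via three steps: first regularize $H$ so that all codegrees are controlled, then apply the assumed $F$-balancedness to the $r_0$-shadow of the regularized hypergraph to obtain a collection $\mathcal{G}$ of copies of $F$, and finally extend each $F'\in \mathcal{G}$ into a copy of $F^{(r)}$ in $H$ via a greedy procedure entirely analogous to the proof of \Cref{lem:translate}.

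For the first step, apply the natural $r$-graph generalization of \Cref{lem:regularize} to pass to a subgraph $H'\subseteq H$ with $|H'|\ge m(\log n)^{-O(1)}$ and a vector $(\Delta_I)_{I\subseteq [r]}$ such that every $|I|$-subset $S$ lying in an edge of $H'$ satisfies $\Om^*(\Delta_I)\le \deg_{H'}(S)\le \Delta_I$. Writing $\Delta:=\Delta_{[r_0]}$, the $r_0$-shadow $G:=\partial^{r_0}H'$ has $|G|=\Om^*(|H'|/\Delta)$, with each $r_0$-edge of $G$ contained in $\Om^*(\Delta)$ hyperedges of $H'$. The key calculation (which dictates the precise form of $M_r$) is that the hypothesis $m\ge M_r(n)$ combined with the trivial range $\Delta\in[1,\binom{n}{r-r_0}]$ forces $|G|\ge M_{r_0}(n)$: inverting the ``balanced'' relation $|G|\asymp n^{(r-r_0)/(r-1)}m^{(r_0-1)/(r-1)}$ for $|G|= M_{r_0}(n)$ gives exactly $m= M_{r_0}(n)^{(r-1)/(r_0-1)}n^{-(r-r_0)/(r_0-1)}$. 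For the second step, apply the $(M_{r_0},\gamma_{r_0},\tau_{r_0})$-balancedness hypothesis to the $r_0$-graph $G$ to obtain a collection $\mathcal{G}$ of $F$-copies in $G$ with $|\mathcal{G}|$ large and codegree bounds governed by $\tau_{r_0}(n,|G|)$.

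For the third step, mirror the proof of \Cref{lem:translate}: for each $F'\in \mathcal{G}$, iteratively extend each $r_0$-edge $e\in F'$ to an $r$-edge $h\in H'$ with $e\subseteq h$ and $h$ vertex-disjoint from all previously chosen vertices outside $e$. Since regularization ensures $\Om^*(\Delta)$ valid choices at each step, this produces at least $\Om^*(|\mathcal{G}|\Delta^{|F|})$ copies of $F^{(r)}$, which we take as $\mathcal{H}$. For the codegree bound, define $\proj(\sigma)$ for $\sigma\subseteq H'$ as the natural generalization of the excerpt's definition (each hyperedge now contributes $\binom{r}{r_0}$ choices of $r_0$-sub-edge, and the condition on $\sigma$ is $|h\cap h'|\le r_0-1$). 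Running the argument of \Cref{lem:translate} verbatim yields
\[\deg_\mathcal{H}(\sigma)\le \sum_{\sigma'\in\proj(\sigma)}\deg_\mathcal{G}(\sigma')\cdot \Delta^{|F|-|\sigma'|},\]
and substituting the $\tau_{r_0}$-codegree bound (while using $|\proj(\sigma)|=\binom{r}{r_0}^{|\sigma|}=O(1)$) gives the defining inequality of $(M_r,\gamma_r,\tau_r)$-balancedness with the stated $\tau_r$, after rewriting $\tau_{r_0}(n,|G|)$ in the required form.

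The hard part will be this last rewriting, which is precisely where monotonicity hypothesis (d) is used: since the exact value of $|G|$ is only known to lie within a polylogarithmic window around $n^{(r-r_0)/(r-1)}m^{(r_0-1)/(r-1)}$, one needs both the non-increasing-in-$m$ and non-decreasing-in-$x$ properties of $\tau_{r_0}$ to cleanly bound $\tau_{r_0}(n,|G|)$ by $\tau_{r_0}(n, n^{(r-r_0)/(r-1)}m^{(r_0-1)/(r-1)}(\log n)^{-C})(\log n)^{O(1)}$. Choosing the constant $C$ large enough to absorb all accumulated polylogarithmic factors then produces the required bound, with monotonicity hypotheses (b) and (c) handling the straightforward well-definedness of $M_r$ and $\gamma_r$.
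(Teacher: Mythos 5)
This theorem is quoted from \cite{nie2024random} (Theorem 2.7 there) and is not proven in the present paper, so there is no ``paper's own proof'' to compare against here. That said, your outline captures the right overall architecture (regularize, take the $r_0$-shadow, apply the lower-uniformity balancedness, extend greedily à la \Cref{lem:translate}), but it has a genuine gap in what you call the ``key calculation.''

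The gap is your assertion that $|G|\asymp n^{(r-r_0)/(r-1)}m^{(r_0-1)/(r-1)}$, and consequently that $m\ge M_r(n)$ together with the trivial range $\Delta\in[1,\binom{n}{r-r_0}]$ forces $|G|\ge M_{r_0}(n)$. Neither is true. Regularization only tells you that $|G|\approx m/\Delta$ up to polylogarithmic factors, and $\Delta$ is a free parameter determined by the structure of $H$: it can be as large as $\Theta(n^{r-r_0})$, in which case $|G|\approx m/n^{r-r_0}$, and one checks that $m\ge M_r(n)$ then gives only $|G|\gtrsim\max\{M_{r_0}(n)^{(r-1)/(r_0-1)}n^{-(r-r_0)/(r_0-1)},\,n^{r-1}\}/n^{r-r_0}$, which is generically \emph{smaller} than $M_{r_0}(n)$ (e.g.\ already for $r_0=2,r=3$ and $M_2(n)\approx n^{2-1/s}$ with $s\ge 3$). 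So you cannot simply apply the $(M_{r_0},\gamma_{r_0},\tau_{r_0})$-balancedness to $G$ as an $n$-vertex $r_0$-graph; the ``polylogarithmic window'' picture of $|G|$ you use in the last paragraph does not hold. For the same reason, after substituting $|G|\approx m/\Delta$ into the codegree bound you would need $\tau_{r_0}(n,m/\Delta)\le O^*(\tau_r(n,m))$, which by the non-increasing property of $\tau_{r_0}$ in $m$ requires $\Delta\lesssim (m/n)^{(r-r_0)/(r-1)}$ --- again false when the shadow is sparse. The missing idea is to handle the sparse-shadow regime by applying balancedness on a \emph{smaller effective vertex set} (the non-isolated vertices of the regularized $H'$, whose count is also controlled by the regularization), and it is precisely the second half of hypothesis (d) --- that $\tau_{r_0}(nx,mx^{(r-r_0)/(r-1)})$ is non-decreasing in $x$ --- that allows you to ``scale back up'' from that smaller vertex count to $n$. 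Without this rescaling step, the proposal does not close.
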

	
	In order to use this result, it will be useful to restate \Cref{cor:supersaturation} in terms of the language from \Cref{def:balanced}.
	\begin{cor}\label{cor:supersaturation'}
		For all $t\ge s\ge 3$, there exist sufficiently large constant $C_3$ such that $K^{(3)}_{s,t}$ is \\$(n^{3-3/s}(\log n)^{C_3},(\log n)^{C_3},\tau_3)$-balanced where
		$$
		\tau_3(n,m)=\max\l\{m^{-1/3}n^{1-1/s},~m^{\frac{-t}{st+3t-2}}n^{\frac{3st-3t-s+2}{s(st+3t-2)}},~n^{\frac{-(s-1)}{s(st-1)}}\r\}^{s-1}\cdot n^{2-\frac{1}{s}}.
		$$
	\end{cor}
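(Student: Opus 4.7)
The plan is to observe that \Cref{cor:supersaturation'} is a direct rephrasing of \Cref{cor:supersaturation} in the language of \Cref{def:balanced}, using the substitution $m = kn^{3-3/s}$. Concretely, given an $n$-vertex 3-graph $H$ with $m$ edges satisfying $m \ge n^{3-3/s}(\log n)^{C_3}$, I would set $k := mn^{-3+3/s}$, so that the hypothesis becomes $k \ge (\log n)^{C_3}$; for $C_3$ chosen large enough to absorb both the factor $K_0$ and the exponent $100$ appearing in \Cref{cor:supersaturation}, this forces $k \ge K_0(\log n)^{100}$, allowing us to apply \Cref{cor:supersaturation} to produce the desired collection $\c{H}$.

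Next, I would translate the codegree conclusion of \Cref{cor:supersaturation} into the $\Delta_i$ bound of \Cref{def:balanced}. Since $v(\c{H}) = |E(H)| = m$, rearranging the bound $(\tau/|H|)^{1-|\sig|}\deg_{\c{H}}(\sig) = O^*(|\c{H}|/v(\c{H}))$ yields
\[
\deg_{\c{H}}(\sig) \le (\log n)^{C}\left(\frac{\tau}{m}\right)^{|\sig|-1}\frac{|\c{H}|}{m}
\]
for some constant $C$ (absorbing the hidden polylogarithmic factor in $O^*$). Taking the maximum over $\sig$ with $|\sig| = i$ gives exactly inequality \eqref{equation:Delta} with $\gamma(n) = (\log n)^{C_3}$, provided $C_3 \ge C$.

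Finally, I would verify that the $\tau$ appearing in \Cref{cor:supersaturation} becomes the $\tau_3(n,m)$ in the statement after the substitution $k = mn^{-3+3/s}$. This is a routine arithmetic check on the three terms inside the maximum: the first becomes $k^{-1/3} = m^{-1/3}n^{1-1/s}$; the second becomes
\[
k^{\frac{-t}{st+3t-2}}n^{\frac{-(s-2)}{s(st+3t-2)}} = m^{\frac{-t}{st+3t-2}}n^{\frac{(3s-3)t - (s-2)}{s(st+3t-2)}} = m^{\frac{-t}{st+3t-2}}n^{\frac{3st-3t-s+2}{s(st+3t-2)}};
\]
and the third term $n^{-(s-1)/(s(st-1))}$ does not involve $k$. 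Raising the maximum to the $(s-1)$-th power and multiplying by $n^{2-1/s}$ recovers exactly the formula for $\tau_3(n,m)$ in the statement.

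There is no real obstacle here; the proof is essentially a bookkeeping exercise. The only thing one must be careful about is choosing $C_3$ uniformly large enough to simultaneously (i) absorb the constant $K_0$ and exponent $100$ from \Cref{cor:supersaturation} into a single polylogarithmic threshold $M_3(n)$, and (ii) dominate the implicit polylogarithmic constant hidden in the $O^*$-notation of the codegree bound.
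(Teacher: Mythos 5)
Your proposal is correct and matches the (implicit) argument in the paper: the paper states \Cref{cor:supersaturation'} as a direct restatement of \Cref{cor:supersaturation} without a separate proof, and your change of variables $k = mn^{-3+3/s}$ together with the observation that $v(\c{H}) = m$ unwinds the codegree condition into the $\Delta_i$ bound of \Cref{def:balanced}. The arithmetic on the three terms of $\tau$ is also correct.
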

	
	Using \Cref{lemma:BSviaSHADOW} we can now ``lift'' \Cref{cor:supersaturation'} to higher uniformity as follows.
	
	\begin{thm}\label{thm:supersaturation-r}
		For all $r\ge 3$ and $t\ge s\ge 3$, there exist sufficiently large constant $C_r$ such that $K^{(r)}_{s,t}$ is $(n^{\alpha_r}(\log n)^{C_r},(\log n)^{C_r},\tau_r)$-balanced where
		$$
		\alpha_r=\max\l\{r-\frac{3(r-1)}{2s},~r-1\r\},
		$$
		and
		$$
		\tau_r(n,m)=\max\l\{m^{-\frac{2}{3(r-1)}}n^{\frac{2r}{3(r-1)}-\frac{1}{s}},~m^{\frac{-2t}{(r-1)(st+3t-2)}}n^{\frac{2rt}{(r-1)(st+3t-2)}-\frac{3t+s-2}{s(st+3t-2)}},~n^{\frac{-(s-1)}{s(st-1)}}\r\}^{s-1} n^{2-\frac{1}{s}}(\log n)^{C_r}.
		$$
	\end{thm}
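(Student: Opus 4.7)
The plan is to apply the lifting result Theorem~\ref{lemma:BSviaSHADOW} with $r_0=3$ and $F=K_{s,t}^{(3)}$, using the balanced supersaturation bounds from Corollary~\ref{cor:supersaturation'} as input.  Since $(K_{s,t}^{(3)})^{(r)} = K_{s,t}^{(r)}$, a successful application directly yields the desired statement, and the task reduces to verifying hypotheses (a)--(d) of the lifting theorem and then simplifying the resulting $M_r, \gamma_r, \tau_r$ to match the stated form.

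Hypothesis (a) is Corollary~\ref{cor:supersaturation'}.  For (b), observe that $M_{r_0}(n) n^{-(r-r_0)/(r-1)} = n^{3-3/s - (r-3)/(r-1)}(\log n)^{C_3}$ has positive exponent on $n$ for $s \ge 3$ and $r \ge 3$ (since $3 - 3/s \ge 2$ while $(r-3)/(r-1) < 1$), so it is non-decreasing in $n$.  Hypothesis (c) is immediate as $\gamma_{r_0}$ is polylogarithmic.  The main technical point is (d), for which we write $\tau_{r_0}(n,m) = (\max\{T_1, T_2, T_3\})^{s-1} \cdot n^{2-1/s}$, with $T_i$ being the three terms inside the max in Corollary~\ref{cor:supersaturation'}.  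Since raising to a positive power commutes with $\max$, we have $\tau_{r_0}(n,m) = \max\{T_1^{s-1}, T_2^{s-1}, T_3^{s-1}\} \cdot n^{2-1/s}$, so it suffices to check that each $T_i^{s-1} n^{2-1/s}$ is non-decreasing under the substitution $(n, m) \mapsto (nx, m x^{2/(r-1)})$.  For $T_1$ and $T_2$ this is routine since the exponents on $m$ are negative (giving positive contribution from $m x^{2/(r-1)}$) and the exponents on $n$ are positive.  The only non-trivial case is $T_3 = n^{-(s-1)/(s(st-1))}$, where the total exponent on $x$ equals $(2s-1)/s - (s-1)^2/(s(st-1))$; clearing denominators, non-negativity reduces to $(2s-1)t \ge s$, which holds trivially for $s, t \ge 3$.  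Non-increasingness of $\tau_{r_0}$ in $m$ is clear since all exponents on $m$ are non-positive.

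With the hypotheses verified, the conclusion of Theorem~\ref{lemma:BSviaSHADOW} gives us $(M_r, \gamma_r, \tau_r)$-balancedness of $K_{s,t}^{(r)}$ directly.  The identity $\gamma_r(n) = (\log n)^{C_r}$ is immediate.  For $M_r$, the first term in the outer max becomes
\[
M_{r_0}(n)^{(r-1)/2} n^{-(r-3)/2}(\log n)^{O(1)} = n^{(3-3/s)(r-1)/2 - (r-3)/2}(\log n)^{O(1)} = n^{r - 3(r-1)/(2s)}(\log n)^{O(1)},
\]
yielding $\alpha_r = \max\{r - 3(r-1)/(2s),\ r-1\}$ as claimed.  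For $\tau_r$, substituting $m' := n^{(r-3)/(r-1)} m^{2/(r-1)} (\log n)^{-C}$ into each of $T_1, T_2, T_3$ produces the three terms inside the stated maximum for $\tau_r$; each reduction is a direct exponent computation, e.g.\ the $n$-exponent coming from $T_1$ simplifies to $1 - 1/s - (r-3)/(3(r-1)) = 2r/(3(r-1)) - 1/s$, matching the stated form.

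The main obstacle is not conceptual but bookkeeping: one must carefully verify hypothesis (d) and then perform the exponent arithmetic to massage the lifted functions into the claimed closed forms.  Beyond this, Theorem~\ref{lemma:BSviaSHADOW} does all of the substantive work.
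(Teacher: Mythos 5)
Your overall strategy matches the paper exactly: apply Theorem~\ref{lemma:BSviaSHADOW} with $r_0=3$, $F=K_{s,t}^{(3)}$, and Corollary~\ref{cor:supersaturation'} as input, then simplify the lifted $M_r,\gamma_r,\tau_r$ by exponent arithmetic. Your computations for $M_r$ and $\tau_r$ are correct. However, your verification of hypothesis~(d) contains two errors that undermine the only nontrivial step of the argument.

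First, the substitution in condition~(d) is wrong. The lifting theorem requires checking that $\tau_{r_0}(nx,\,m x^{(r-r_0)/(r-1)})$ is non-decreasing in $x$; with $r_0=3$ this means $\tau_3(nx,\,m x^{(r-3)/(r-1)})$. You wrote $x^{2/(r-1)}$, apparently conflating this with the exponent $(r_0-1)/(r-1)=2/(r-1)$ that appears in the \emph{output} formula for $\tau_r$. These agree only when $r=5$; in particular for $r>5$ the exponent you check is strictly smaller, so proving your statement would not establish the condition actually needed (a larger exponent on $x$ makes the $m$-contribution more negative and the verification harder, since $\tau_3$ is non-increasing in $m$). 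Second, and more fundamentally, your reasoning for $T_1$ and $T_2$ --- that the negative exponent on $m$ gives a ``positive contribution'' under $m\mapsto m x^c$ --- has the sign backwards: a factor $m^{-a}$ with $a>0$ becomes $m^{-a}x^{-ac}$, contributing a \emph{negative} amount to the $x$-exponent. So non-negativity of the total $x$-exponent for $T_1,T_2$ is genuinely a competition between the positive $n$-contribution and the negative $m$-contribution, and must be checked by computation; the paper does exactly this, bounding the bad term by $-\frac{r-3}{3(r-1)}>-1$ (resp.\ $-\frac{r-3}{r-1}>-1$) and verifying that the resulting lower bound $\min\{\cdot\}(s-1)+1$ is positive. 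Your ``routine'' dismissal does not substitute for that calculation.
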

	
	\begin{proof}
		Let $C_3$ be the constant guaranteed by \Cref{cor:supersaturation'}. We want to apply \Cref{lemma:BSviaSHADOW} with $F=K^{(3)}_{s,t}$, $M_3(n)=(\log n)^{C_3}n^{3-3/s}$, $\gamma_3(n)=(\log n)^{C_3}$, and
		$$
		\tau_3(n,m)=\max\l\{m^{-1/3}n^{1-1/s},~m^{\frac{-t}{st+3t-2}}n^{\frac{3st-3t-s+2}{s(st+3t-2)}},~n^{\frac{-(s-1)}{s(st-1)}}\r\}^{s-1} n^{2-\frac{1}{s}}.
		$$
		To this end, we need to check the conditions (a)-(d) in \Cref{lemma:BSviaSHADOW}. Condition (a) is guaranteed by \Cref{cor:supersaturation'}. It is easy to check that conditions (b) and (c) hold. To check condition (d), note that the coefficient of $m$ in $\tau_3(m,n)$ is negative and that the coefficient of $x$ in 
		$\tau_3(nx,mx^{\frac{r-3}{r-1}})$ is at least 
		
		\[
		\begin{aligned}
			\min\l\{-\frac{1}{3} \cdot \frac{r-3}{r-1}+1-\frac{1}{s},~\frac{-t}{st+3t-2}\cdot\frac{r-3}{r-1}+\frac{3st-3t-s+2}{s(st+3t-2)},~\frac{-(s-1)}{s(st-1)}\r\}(s-1)+2-\frac{1}{s}.
		\end{aligned}\]
		
		Using $\frac{r-3}{3r-3}<1$, $\frac{r-3}{r-1}<1$, and $\frac{s-1}{s}<1$, we see that this exponent for $x$ is strictly larger than
		\[
		\begin{aligned}
			\min\l\{-\frac{1}{s},~\frac{2st-3t-s+2}{s(st+3t-2)},~\frac{-1}{st-1}\r\}(s-1)+1>0,
		\end{aligned}
		\]
		with this last step using that $\frac{-1}{s},\frac{-1}{st-1}>\frac{-1}{s-1}$ and that the numerator of the middle term of the minimum can be rewritten as, $(2s-3)(t-1)+s-1$, which is non-negative (and hence trivially is more than $\frac{-1}{s-1}$).  Thus condition (d) holds and we are able to apply \Cref{lemma:BSviaSHADOW}.

		Let $C$ be as guaranteed by \Cref{lemma:BSviaSHADOW} and let $C_r\ge C$ be sufficiently large so that
		$$
		M_r(n):=\max\l\{M_3(n)^{\frac{r-1}{2}}n^{-\frac{r-3}{2}},~n^{r-1}\r\}(\log n)^{C}\le n^{\alpha_r}(\log n)^{C_r},
		$$
		$$
		\gamma_r(n):=\gamma_3(n)(\log n)^C\le (\log n)^{C_r},
		$$
		and
		$$
		\begin{aligned}
			\tau_r(n,m)&:=\tau_3\l(n,~n^{\frac{r-3}{r-1}}m^{\frac{2}{r-1}}(\log n)^{-C}\r)(\log n)^C.\\
			&\le \max\l\{\l(n^{\frac{r-3}{r-1}}m^{\frac{2}{r-1}}\r)^{-1/3}n^{1-1/s},~\l(n^{\frac{r-3}{r-1}}m^{\frac{2}{r-1}}\r)^{\frac{-t}{st+3t-2}}n^{\frac{3st-3t-s+2}{s(st+3t-2)}},~n^{\frac{-(s-1)}{s(st-1)}}\r\}^{s-1} n^{2-\frac{1}{s}}(\log n)^{C_r}\\
			&=\max\l\{m^{-\frac{2}{3(r-1)}}n^{\frac{2r}{3(r-1)}-\frac{1}{s}},~m^{\frac{-2t}{(r-1)(st+3t-2)}}n^{\frac{2rt}{(r-1)(st+3t-2)}-\frac{3t+s-2}{s(st+3t-2)}},~n^{\frac{-(s-1)}{s(st-1)}}\r\}^{s-1} n^{2-\frac{1}{s}}(\log n)^{C_r}.
		\end{aligned}
		$$
		
		Then by \Cref{lemma:BSviaSHADOW} we know that $K^{(r)}_{s,t}$ is $(M_r,~\gamma_r,~\tau_r)$-balanced, which completes the proof.
		
	\end{proof}
	
	Now we are ready to prove \Cref{thm:Kst}, which we restate below for convenience. 
	
	\begingroup
	\def\thethm{\ref{thm:Kst}}
	\begin{thm}
		If $r,s,t\ge 3$ are integers such that $t\ge s$ and either
		\begin{itemize}
			\item $r\ge 2s/3+2$, or
			\item $r\ge \lfloor 2s/3+2\rfloor$ and $t\ge 8s/3$,
		\end{itemize}  
		then for all $0<p=p(n)\le 1$ we have a.a.s.
		$$
		\ex(G^r_{n,p}, K_{s,t}^{(r)})=
		\l\{
		\begin{aligned}
			& \max\l\{pn^{r-1},n^{2-\frac{s+t-2}{st-1}}\r\}(\log n)^{\Theta(1)},~~&\t{if}~p\gg n^{-r+2-\frac{s+t-2}{st-1}}\\
			&(1+o(1))p\binom{n}{r},~~&\t{if}~ n^{-r}\ll p\ll  n^{-r+2-\frac{s+t-2}{st-1}}.\\
		\end{aligned}
		\r.
		$$
	\end{thm}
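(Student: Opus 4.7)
The plan is to deduce \Cref{thm:Kst} from \Cref{lem:generalLowerRandomTuran}, \Cref{Lemma:General Random Turan}, and the balanced supersaturation of \Cref{thm:supersaturation-r}. The lower bounds and the small-$p$ upper bound are essentially immediate: taking $F'=F$ gives $d_r(K_{s,t}^{(r)})\ge \frac{st-1}{(s+t-2)+(r-2)(st-1)}$, and $\ex(n,K_{s,t}^{(r)})=\Theta(n^{r-1})$ holds by Kostochka-Mubayi-Verstra\"ete since $\binom{r}{2}\ge s$ under our hypotheses on $r$ and $s\ge 3$. Plugging these into \Cref{lem:generalLowerRandomTuran} yields the claimed lower bounds in both regimes of $p$, while the upper bound $(1+o(1))p\binom{n}{r}$ in the sparse regime is trivial from $|E(G_{n,p}^r)|=(1+o(1))p\binom{n}{r}$ a.a.s.

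The real work is the upper bound when $p\gg n^{-r+2-(s+t-2)/(st-1)}$. I would apply \Cref{Lemma:General Random Turan} using the balanced supersaturation of \Cref{thm:supersaturation-r}. Hypothesis (a) is exactly \Cref{thm:supersaturation-r}, and (b) is immediate since each entry in the max defining $\tau_r$ is either constant in $m$ or has a negative exponent of $m$. Under our hypothesis on $r$ one has $r-1\ge 2s/3$, so $\alpha_r=r-1$ and $M_r(n)=n^{r-1}(\log n)^{C_r}$. Applying \Cref{Lemma:General Random Turan} at $m=M_r(n)$ gives
\[\ex(G_{n,p}^r,K_{s,t}^{(r)})\le \max\{C'pn^{r-1}(\log n)^{C_r},\ \tau_r(n,n^{r-1})(\log n)^{O(1)}\}\]
a.a.s., reducing the task to verifying $\tau_r(n,n^{r-1})=n^{2-(s+t-2)/(st-1)}(\log n)^{O(1)}$.

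This final bound is the heart of the proof. A direct calculation, using the identity $2-\tfrac{1}{s}-\tfrac{(s-1)^2}{s(st-1)}=\tfrac{2st-s-t}{st-1}$, shows that the third entry $n^{-(s-1)/(s(st-1))}$ in the max defining $\tau_r$ contributes exactly the desired exponent $2-\tfrac{s+t-2}{st-1}$ once raised to $s-1$ and multiplied by $n^{2-1/s}$. It thus suffices to show that at $m=n^{r-1}$, the first two entries in the max are at most the third. For the first entry this boils down to $3(r-1)(t-1)\ge 2(st-1)$, trivial under $r\ge 2s/3+2$ and $t\ge s\ge 3$. For the second entry, after exploiting the identity $(3t+s-2)(st-1)-(s-1)(st+3t-2)=s(3t-1)(t-1)$, the required inequality rearranges to the quadratic
\[(3(r-1)-2s)t^2-(4(r-1)-2)t+(r-1)\ge 0.\]

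Under the first bullet one verifies this quadratic is non-negative at $t=s$ (where it evaluates to a positive multiple of $(s-1)(s-3)$) and is increasing on $t\ge s$, so the inequality follows for all $t\ge s$. The main obstacle is the second bullet, where $r$ may be as small as $\lfloor 2s/3+2\rfloor$. The worst case is $s\equiv 1\pmod 3$, for which the coefficient $3(r-1)-2s$ collapses to $1$ and a careful computation shows the larger root of the resulting quadratic is approximately $8s/3$ up to lower-order terms; this is precisely why the hypothesis $t\ge 8s/3$ is the threshold under which the inequality continues to hold, and this accounts for the exact shape of the second bullet in the theorem.
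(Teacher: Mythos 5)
Your proposal follows essentially the same route as the paper: the lower bounds and the small-$p$ upper bound come from \Cref{lem:generalLowerRandomTuran} with the computed $r$-density, the hard upper bound is obtained by applying \Cref{Lemma:General Random Turan} to the balanced supersaturation of \Cref{thm:supersaturation-r} at $m=n^{r-1}$, and everything then reduces to the same pair of arithmetic inequalities on $r$. Two small organizational differences from the paper: first, the paper observes that the second $r$-inequality $r\ge 1+\tfrac{2(st-1)t}{(3t-1)(t-1)}$ dominates the first (since $\tfrac{t}{3t-1}\ge\tfrac{1}{3}$), so it only verifies a single bound; second, the paper handles the second bullet not by locating the larger root of your quadratic but by directly substituting $s\le 3t/8$ into $\tfrac{2s}{3}+\tfrac{4}{3}-1-\tfrac{2(st-1)t}{(3t-1)(t-1)}$ and showing the numerator simplifies to $\tfrac{11}{4}t+1\ge 0$.

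The one place your argument is underspecified is that final verification for the second bullet. You correctly identify $s\equiv 1\pmod 3$ as the worst case (where the leading coefficient $3(r-1)-2s$ collapses to $1$), but asserting the larger root is ``approximately $8s/3$ up to lower-order terms'' does not yet show the inequality holds for all $t\ge 8s/3$ at every admissible $s$. To close the gap you need to verify the quadratic $(3(r-1)-2s)t^2-(4(r-1)-2)t+(r-1)$ is non-negative at $t=8s/3$ (and increasing past the vertex); for instance, for $r-1=(2s+1)/3$ the quadratic is $t^2-\tfrac{8s-2}{3}t+\tfrac{2s+1}{3}$, and one checks its larger root $\tfrac{8s-2+\sqrt{64s^2-56s-8}}{6}$ is strictly below $8s/3$ for all $s\ge 3$. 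The paper's substitution approach avoids root analysis and is cleaner, but your route works once the computation is actually carried out rather than sketched.
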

	\addtocounter{thm}{-1}
	\endgroup
	
	\begin{proof}
		Note that the $r$-density of $K^{(r)}_{s,t}$ is
		$$
		d_r(K^{(r)}_{s,t}):=\max_{F'\subseteq K^{(r)}_{s,t},\ |F'|\ge 2}\l\{\frac{|F'|-1}{v(F')-r}\r\}=\frac{|K^{(r)}_{s,t}|-1}{v(K^{(r)}_{s,t})-r}=\frac{st-1}{(r-2)st+s+t-r},
		$$
		and hence
		\[\frac{1}{d_r(K_{s,t}^{(r)})}=\frac{(r-2)st+s+t-r}{st-1}=r-2+\frac{s+t-2}{st-1}.\]
		By \Cref{lem:generalLowerRandomTuran} we conclude all of the bounds of \Cref{thm:Kst} except for
		$$\ex(G^r_{n,p}, K_{s,t}^{(r)})\le  \max\l\{pn^{r-1},n^{2-\frac{s+t-2}{st-1}}\r\}(\log n)^{\Theta(1)}$$ when $p\gg n^{-r+2-\frac{s+t-2}{st-1}}$. 
		Let $C_r$ be the constant guaranteed by \Cref{thm:supersaturation-r}. We will apply \Cref{Lemma:General Random Turan} with $F=K^{(r)}_{s,t}$, $M(n)=n^{r-1}(\log n)^{C_r}$, and 
		$$
		\tau_r(n,m)=\max\l\{m^{-\frac{2}{3(r-1)}}n^{\frac{2r}{3(r-1)}-\frac{1}{s}},~m^{\frac{-2t}{(r-1)(st+3t-2)}}n^{\frac{2rt}{(r-1)(st+3t-2)}-\frac{3t+s-2}{s(st+3t-2)}},~n^{\frac{-(s-1)}{s(st-1)}}\r\}^{s-1} n^{2-\frac{1}{s}}(\log n)^{C_r}.
		$$
		By \Cref{thm:supersaturation-r} and the definition of $\tau_r$, one can check that both conditions (a) and (b) in \Cref{Lemma:General Random Turan} hold, and we have $pM(n)\to \infty$ as $n\to \infty$ for $p\gg n^{-r+2-\frac{s+t-2}{st-1}}$. Thus, by \Cref{Lemma:General Random Turan} applied with $m=M(n)$, we have for $C'$ and $C''$ sufficiently large constants that\footnote{To verify this, it can help to realize that each term in the maximum of $\tau_r$ is of the form $m^{-x}n^{rx-y}$, so plugging in $m=n^{r-1}$ reduces these terms to $n^{x-y}$, after which we multiply these exponents by $(s-1)$ and add $1+(s-1)/s$ to them, ultimately leaving these terms in the form $n^{(s-1)x+1+\frac{(s-1)}{s}[1+sy]}$.} 
		$$
		\begin{aligned}
			\ex(G^r_{n,p},K^{(r)}_{s,t})&\le \max\l\{C'pM(n),\tau_r(n,M(n))(\log n)^{C'}\r\} \\
			&\le \max\l\{pn^{r-1},~n^{\frac{2(s-1)}{3(r-1)}+1},~n^{\frac{2t(s-1)}{(r-1)(st+3t-2)}+1+\frac{(s-1)(t-1)}{st+3t-2}},~n^{1+\frac{(s-1)(t-1)}{st-1}}\r\} (\log n)^{C''}.
		\end{aligned}
		$$
		
		To complete the proof, it suffices to show that each of the middle two terms in the maximum above is upper bounded by the last term, which by subtracting 1 from each exponent and dividing by $s-1$ is equivalent to showing
		\[\frac{2}{3(r-1)}\le \frac{t-1}{st-1}\hspace{3em}\textrm{and}\hspace{3em} \frac{2t}{(r-1)(st+3t-2)}+\frac{t-1}{st+3t-2}\le \frac{t-1}{st-1}.\]
		Each of these two inequalities are equivalent to a lower bound on $r$ which in total combine into

		\begin{equation}\label{eq:rBound}
			r\ge \max\l\{1+\frac{2(st-1)}{3(t-1)},~1+\frac{2(st-1)t}{(3t-1)(t-1)}\r\}=1+\frac{2(st-1)t}{(3t-1)(t-1)}.
		\end{equation}
		
		All that remains then is to show that the $r$ as in the hypothesis of \Cref{thm:Kst} satisfy \eqref{eq:rBound}.  First consider the case that $r\ge \frac{2s}{3}+2$, in which case using $3\le s\le t$ implies
		
		$$
		\begin{aligned}
			\frac{2s}{3}+2-\l(1+\frac{2(st-1)t}{(3t-1)(t-1)}\r)&=\frac{(2s+3)(3t-1)(t-1)-6(st-1)t}{3(3t-1)(t-1)}\\
			&=\frac{9t^2-6t+3-2s(4t-1)}{3(3t-1)(t-1)}\\
			&\ge \frac{9t^2-6t+3-2t(4t-1)}{3(3t-1)(t-1)}\\
			&=\frac{t-3}{3(3t-1)}\\
			&\ge 0,
		\end{aligned}
		$$
		verifying \eqref{eq:rBound} in this case.  Now assume $r\ge \frac{2s}{3}+\frac{4}{3}$ (which in particular implies $r\ge \lfloor \frac{2s}{3}+2\rfloor$), in which case $s\le 3t/8$ implies
		
		$$
		\begin{aligned}
			\frac{2s}{3}+\frac{4}{3}-\l(1+\frac{2(st-1)t}{(3t-1)(t-1)}\r)&=\frac{3t^2+2t+1-2s(4t-1)}{3(3t-1)(t-1)}\\
			&\ge \frac{3t^2+2t+1-\frac{6}{8}t(4t-1)}{3(3t-1)(t-1)}\\
			&=\frac{\frac{11}{4}t+1}{3(3t-1)(t-1)}\\
			&\ge 0.
		\end{aligned}
		$$
		
		We conclude that \eqref{eq:rBound} always holds for our potential values of $r$, finishing the proof.
		
	\end{proof}
	As an aside, our proof above implicitly shows that the bounds of \Cref{thm:Kst} hold whenever \eqref{eq:rBound} holds, which can be used to get slightly better bounds on $t$ in \Cref{thm:Kst}.  For example, when $s\equiv 2 \mod 3$ we can improve the lower bound on $t$ to roughly $4s/3$, and in the $s\equiv 1\mod 3$ case we can improve the bound on $t$ by 1.

	\section{Proof of \Cref{prop:sparse}}\label{subsec:Sparse}
	
	Here we prove our second graph supersaturation result \Cref{prop:sparse}, which we recall was the only ingredient that we needed in our proof of \Cref{prop:technicalSupersaturation} whose proof we deferred.  We begin by recalling our notation and restating (an equivalent version of) this result.
	
	Given a tripartite graph $G$ together with a set of parameters, we defined a function $\phi:2^{V(G)}\to \R\cup\{\infty\}$ as follows: given $\nu\sub V(G)$, let $a=|\nu\cap (V_1\cup V_2)|$ and $b=|\nu\cap V_3|$.  If $a=0$ or $b=0$, then $\phi(\nu)=\infty$.  Otherwise, if $\nu\cap V_1\ne \emptyset$ we define 
	\[\phi(\nu)=\frac{\ell^{s-1} n^{s-1+1/s}(\ell^{3-2s}K^{s-1})^t}{\del m_{1,3}(\del \ell^{-1} K n^{1-2/s})^{a-1}(\del \ell^{3-2s} K^{s-1})^{b-1}}\]
	and if $\nu\cap V_1=\emptyset $ we define
	\[\phi(\nu)=\frac{\ell^{s-1} n^{s-1+1/s}(\ell^{3-2s}K^{s-1})^t}{\del m_{2,3}(\del \ell^{-1} K n^{1-2/s})^{a-1}(\del \ell^{3-2s} K^{s-1})^{b-1}}.\]

	Recall that \Cref{prop:sparse} essentially says that if $G$ is a tripartite graph satisfying certain conditions, then we can (in particular) find a large collection $\c{G}$ of $K_{s,t}$'s in $G$ which has no edges in $V_1\cup V_2$, where we view $\c{G}$ as an $st$-uniform hypergraph whose vertex set is $E(G)$.  Observe crucially that knowing the edge set of a $K_{s,t}$ which has no edges in $V_1\cup V_2$ is equivalent to knowing the vertex set of this $K_{s,t}$.  
	
	Because of this, instead of working with the $st$-uniform hypergraph $\c{G}$ above, it will be slightly more convenient to work with the (equivalent) $(s+t)$-uniform hypergraph $\c{G}_v$ which has vertex set $V(G)$ whose hyperedges encode copies of $K_{s,t}$ in $G$ which have no edges in $V_1\cup V_2$.  With this in mind, \Cref{prop:sparseV} can be written in the following equivalent form (noting that the only change in this statement comes from working with $\c{G}_v$ instead of $\c{G}$).

	\begin{prop}\label{prop:sparseV}
		For all $s,t\ge 3$, there exists a sufficiently small $\del>0$  such that the following holds.  Let $\ell,K,n$ be real numbers and let $H$ be a tripartite 3-graph with tripartition $V_1\cup V_2\cup V_3$ with $m_{i,j}$  the number of pairs in $V_i\cup V_j$ contained in at least one hyperedge.  Further, assume $\ell,K,n, H$ satisfy the following:
		\begin{itemize}
			\item[(a)] $|V_i|\le n$ for all $i$.
			\item[(b)] $H$ has at least $K n^{3-3/s}(12\log n)^{16}$ hyperedges.
			\item[(c)] $m_{i,j}\le \ell |V_1|^{2-1/s}$ for all $i,j$.
			\item[(d)] We have $\del^{-1}\le \ell\le \del K^{(s-1)/(2s-3)}$. 
			\item[(e)] Each pair in $V_i\cup V_j$ is contained in at most $K n^{3-3/s} m_{i,j}^{-1} (12\log n)^{32}$ hyperedges for all $i,j$.
			\item[(f)] Each vertex in $V_1$ has at most $\ell |V_1|^{1-1/s} (12\log n)^{16}$ neighbors in $V_2$.
		\end{itemize}
		Then for $G=\partial^2 H$, there exists a collection $\c{G}_v$ of $K_{s,t}$'s in $G$ (viewed as a hypergraph on $V(G)$) such that every $K_{s,t}$ of $\c{G}_v$ intersects $V_1$ in 1 vertex and $V_2$ in $s-1$ vertices, such that $|\c{G}_v|=\Om^*(\ell^{s-1} n^{s-1+1/s}(\ell^{3-2s}K^{s-1})^t)$, and such that for any $\nu\sub V(G)$ we have $\deg_{\c{G}_v}(\nu)\le \phi(\nu)$.
	\end{prop}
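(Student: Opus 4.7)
The plan is to prove \Cref{prop:sparseV} by a vertex-supersaturation argument in the spirit of \cite{mckinley2023random}, factoring each $K_{s,t}$ as an ``anchor'' $s$-tuple $(u_1;u_2,\ldots,u_s)\in V_1\times V_2^{s-1}$ together with a $t$-subset of its common $V_3$-neighborhood in $G=\partial^2 H$. The target lower bound $\Om^*(\ell^{s-1}n^{s-1+1/s}(\ell^{3-2s}K^{s-1})^t)$ suggests aiming for $\Om^*(\ell^{s-1}n^{s-1+1/s})$ anchors, each with common $V_3$-neighborhood of size $\Om^*(\ell^{3-2s}K^{s-1})$. The construction of $\c{G}_v$ will be to sample a sub-collection of such anchors and then include all $t$-subsets of each common neighborhood; the small parameter $\del$ in $\phi$ provides the slack to absorb polylog and constant losses from the sampling and from the auxiliary regularization step.

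First I would further regularize $H$ by iterated degree/codegree pruning (using a lemma of \Cref{lem:regularize} type), so that each surviving $v\in V_i$ has degree into $V_j$ (for $j\ne i$) close to $m_{i,j}/|V_i|$, each pair in $G_{i,j}$ has hyperedge codegree close to $|H|/m_{i,j}$, and each $w\in V_3$ has close to typical degree into $V_1$ and into $V_2$. Conditions (a)--(f) are robust under these prunings up to polylog factors that get absorbed into $\Om^*$. Then I would locate the anchor tuples via a Kővári-Sós-Turán/Jensen-style convexity count: summing $\binom{\deg_{G_{2,3}}(w)}{s-1}\deg_{G_{1,3}}(w)$ over $w\in V_3$ gives $\Om^*(n\cdot (m_{2,3}/n)^{s-1}\cdot m_{1,3}/n)=\Om^*(\ell^{s-1}n^{s-1+1/s}\cdot \ell^{3-2s}K^{s-1})$ anchor-with-distinguished-common-neighbor triples, after using the sparsity bounds from (c) and (d); a second averaging over the common $V_3$-neighborhood, combined with the triangle density implied by $|H|\ge Kn^{3-3/s}$, then yields the required count of anchors with common neighborhood of size $\Om^*(\ell^{3-2s}K^{s-1})$.

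The bulk of the work lies in verifying the codegree bound $\deg_{\c{G}_v}(\nu)\le \phi(\nu)$ for every $\nu\sub V(G)$, by case analysis on $(a,b)=(|\nu\cap(V_1\cup V_2)|,|\nu\cap V_3|)$. In each case one bounds the number of anchors extending the $V_1\cup V_2$ part of $\nu$ (using the regularized codegrees) and the number of $t$-subsets of the common neighborhood containing the $V_3$ part of $\nu$. The three factor types in the denominator of $\phi$ correspond respectively to: one $m_{i,3}$ factor paying for fixing the first anchor vertex (in $V_1$ when $\nu\cap V_1\ne\emptyset$, otherwise in $V_2$, which explains the dichotomy), $a-1$ factors of $\del\ell^{-1}Kn^{1-2/s}$ each paying for fixing one more $V_1\cup V_2$ vertex (reading off from the typical codegree of a vertex into the common $V_3$-neighborhood of an already-chosen anchor), and $b-1$ factors of $\del\ell^{3-2s}K^{s-1}$ each paying for fixing one more $V_3$ vertex inside the common neighborhood. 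Hypothesis (d) with $\del$ small is what allows all losses to be absorbed cleanly.

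The main obstacle is this fourth step, and specifically the requirement that the construction be genuinely spread out (codegree bound), not merely large (cardinality bound). Achieving both simultaneously will require a randomized subsampling of the anchor collection, together with a random choice of common-neighborhood $t$-sets, tuned so that for every $\nu$ the expected codegree matches the right side of $\phi$; control of the resulting degree sequence then follows from standard concentration, using the polylog headroom in $\Om^*$. The asymmetry between (e) and (f), which is the technical reason the bound depends on whether $\nu$ meets $V_1$, will also need to be handled carefully in the case where $a\ge 2$ and the first anchor vertex role is contested. All constants and polylog factors are subordinate to $\del$, so taking $\del$ sufficiently small in terms of $s,t$ completes the proof.
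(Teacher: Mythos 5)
Your proposal takes a genuinely different route from the paper, and unfortunately it has a gap at exactly the step you flag as the ``main obstacle.''

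The paper does \emph{not} build $\c{G}_v$ constructively and then verify the codegree bounds. Instead it takes $\c{G}_v$ to be a \emph{maximal} $\phi$-bounded collection, so that $\deg_{\c{G}_v}(\nu)\le\phi(\nu)$ holds automatically by construction, and proves by contradiction that this maximal collection must already be large. The heart of the argument is the notion of \emph{saturated sets} $\c{F}=\{\nu:\deg_{\c{G}_v}(\nu)\ge\floor{\phi(\nu)}\}$: if $|\c{G}_v|$ were small, then the paper shows $\c{F}$ is sparse enough (quantitatively, via \Cref{cl:H'} and \Cref{cl:link}) that a KST-style greedy extension argument can produce \emph{more than} $|\c{G}_v|$ copies of $K_{s,t}$ that are $\c{F}$-good (\Cref{cl:P,cl:S}), i.e.\ which could be added to $\c{G}_v$ while preserving $\phi$-boundedness, contradicting maximality. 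Your anchor-counting heuristic does correctly identify the star structure $V_1\times V_2^{s-1}\times V_3^t$ and the source of the $m_{1,3}$-vs-$m_{2,3}$ dichotomy, and the quantitative targets you compute are the right ones; but those counts are deployed inside the contradiction argument, not as a standalone construction.

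The gap in the proposed randomized-subsampling step is a circularity. A global subsampling of anchors (or of $K_{s,t}$'s) at rate $p$ multiplies every codegree by roughly $p$, so for the subsampled collection to satisfy $\deg(\nu)\le\phi(\nu)$ for all $\nu$ while retaining $\Om^*(M)$ copies, one would need the ratio $\deg_{\mathrm{full}}(\nu)/\phi(\nu)$ to be essentially uniform across all $\nu$ in the full collection --- but that uniformity is exactly the balancedness we are trying to prove. Nothing in (a)--(f) forces it: for instance, (e) and (f) control hyperedge codegrees and $V_1\to V_2$ degrees, but give no direct handle on the size of the common $V_3$-neighborhood of a particular pair $\{u_1,u_2\}\subset V_1\cup V_2$, which governs $\deg(\nu)$ for $\nu$ of type $(a,b)=(2,1)$. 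A local, $\nu$-dependent rejection rule could in principle fix this, but designing and analyzing such a rule is precisely what the maximal-collection-plus-saturated-sets machinery of Morris--Saxton (which the paper adapts) accomplishes cleanly; concentration alone does not, since the problem is in the expectations, not the fluctuations. If you want to make a constructive argument go through, you would need to explicitly prune those $\nu$ that are ``over-saturated'' before sampling --- at which point you have essentially reinvented the $\c{F}$-good framework.
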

	It is not difficult to see that \Cref{prop:sparseV} is equivalent to \Cref{prop:sparse}, so it remains to prove this result.  We also note for later that (d) implies that $\ell$ and $K$ (and hence $n$) are sufficiently large in terms of $s,t$. 
	
	\begin{proof}
		
		We begin by establishing a series of preliminary claims that will be of use to us later on.
		
		\begin{claim}\label{cl:floor}
			We have $\phi(\nu)\ge 1$ for all $\nu\sub V(G)$ with $|\nu\cap (V_1\cup V_2)|\le s$ and $|\nu \cap V_3|\le t$.  In particular, $\floor{\phi(\nu)}\ge \half \phi(\nu)$ for all such $\nu$ with $\phi(\nu)<\infty$.
		\end{claim}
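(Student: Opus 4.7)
The plan is to show that $\phi(\nu)\ge 1$ by proving that $\phi$, as a function of $\nu$, is minimized over the allowed range ($1\le a\le s$ and $1\le b\le t$) at the extremal point $a=s,\ b=t$, and then directly verifying the bound at that point. (If $a=0$ or $b=0$ then $\phi(\nu)=\infty$ and there is nothing to prove.)

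First I would check that the two ``base factors'' appearing with positive exponents in the denominator of $\phi$, namely $\del\ell^{-1}Kn^{1-2/s}$ and $\del\ell^{3-2s}K^{s-1}$, are both at least $1$. For the second factor, condition (d) rearranges to $\ell^{2s-3}\le\del^{2s-3}K^{s-1}$, giving $\del\ell^{3-2s}K^{s-1}\ge \del^{4-2s}$, which is $\ge 1$ once $\del$ is small enough and $s\ge 3$. For the first factor, condition (d) also forces $K$ to be large (in particular $K^{(s-1)/(2s-3)}\ge \del^{-2}$), and combined with $n$ large this yields $\del\ell^{-1}Kn^{1-2/s}\ge K^{(s-2)/(2s-3)}n^{1-2/s}\ge 1$. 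Since both base factors are $\ge 1$, raising them to larger exponents only enlarges the denominator, so $\phi(\nu)$ is minimized over valid $(a,b)$ at $a=s,\ b=t$.

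The second step is to evaluate $\phi$ at $a=s,b=t$. A mechanical simplification of the exponents shows that the $K^{(s-1)t}$ factors cancel, the powers of $\ell$ collapse to exponent $1$, and the powers of $n$ collapse to exponent $2-1/s$, leaving
\[
\phi(\nu)\;=\;\frac{\del^{-(s+t-1)}\,\ell\,n^{2-1/s}}{m_{i,3}},
\]
where $i\in\{1,2\}$ according to whether $\nu$ meets $V_1$. Applying $m_{i,3}\le \ell|V_1|^{2-1/s}\le \ell n^{2-1/s}$ from (a) and (c) gives $\phi(\nu)\ge \del^{-(s+t-1)}\ge 1$ for $\del\le 1$. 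The ``in particular'' assertion is then immediate from the elementary fact $\floor{x}\ge x/2$ for $x\ge 1$.

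The only real obstacle is the exponent bookkeeping in the final simplification, which is tedious but entirely routine once one organizes numerator and denominator side by side and pays attention to the cancellation $\ell^{s-1}\cdot \ell^{(3-2s)t}\cdot (\del\ell^{-1})^{-(s-1)}\cdot (\del\ell^{3-2s})^{-(t-1)}$ of the $\ell$-terms, and the analogous cancellation of the $n$-terms.
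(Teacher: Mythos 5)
Your proposal is correct and follows essentially the same route as the paper's proof: show the two base factors $\del\ell^{-1}Kn^{1-2/s}$ and $\del\ell^{3-2s}K^{s-1}$ are at least $1$ (so $\phi$ is minimized at $a=s,\,b=t$), evaluate there to get $\del^{-(s+t-1)}\ell n^{2-1/s}/m_{i,3}$, and finish with $m_{i,3}\le \ell n^{2-1/s}$ from (a) and (c). The paper's verification of the first base factor is marginally slicker (it uses $\ell\le \del K^{(s-1)/(2s-3)}\le \del K$ directly), but the arguments are the same in substance.
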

		
		\begin{proof}
			Let $a=|\nu\cap (V_1\cup V_2)|$ and $b=|\nu\cap V_2|$. 
			If $a=0$ or $b=0$ then $\phi(\nu)=\infty$ and there is nothing to show, so we can assume $a,b\ge 1$.   Observe that $\phi$ is decreasing in $a,b$ (since $\ell\le \del K^{(s-1)/(2s-3)}\le \del K$ implies $\del \ell^{-1} K n^{1-2/s}\ge 1$ and $\del \ell^{3-2s}K^{s-1}\ge 1$).  Thus, for all such $\nu$, we have for some $i\in \{1,2\}$ that 
			\[\phi(\nu)\ge \frac{\ell^{s-1} n^{s-1+1/s}\cdot (\ell^{3-2s}K^{s-1})^t}{\del m_{i,3}(\del \ell^{-1} K n^{1-2/s})^{s-1}(\del \ell^{3-2s} K^{s-1})^{t-1}}= \del^{-s-t+1} m_{i,3}^{-1} \ell n^{2-1/s}\ge\del^{-s-t+1},\]
			where this last step used (c) and (a) to get $m_{i,3}\le \ell |V_1|^{2-1/s}\le \ell n^{2-1/s}$.  The result follows for $\del$ sufficiently small.
		\end{proof}

		For some slight ease of notation, we define 
		\[M:=\ell^{s-1} n^{s-1+1/s}(\ell^{3-2s}K^{s-1})^t,\]
		noting that this is the numerator in the definition of $\phi(\nu)$ and is the lower bound we wish to prove for $|\c{G}_v|$ up to logarithmic factors. We will say that a collection $\c{G}_v$ of $K_{s,t}$'s in $G$ is \textit{$\phi$-bounded} if $\deg_{\c{G}_v}(\nu)\le \phi(\nu)$ for all $\nu\sub V(G)$, and if every $K_{s,t}$ in $\c{G}_v$ has its part of size $s$ intersecting $V_1$ in 1 vertex and $V_2$ in $s-1$ vertices.  Let $\c{G}_v$ be the largest $\phi$-bounded collection of $K_{s,t}$'s.  If $|\c{G}_v|\ge M(12 \log n)^{-16}$ then we are done, so from now on we may assume for contradiction that 
		\begin{equation}
			|\c{G}_v|<M(12 \log n)^{-16} \label{eq:smallGv}
		\end{equation}
		
		Our goal now is to show that there exist more than $|\c{G}_v|$ copies of $K_{s,t}$ in $G$ such that $\c{G}_v$ together with any of these copies continues to be $\phi$-bounded, contradicting the maximality of $\c{G}_v$.  Motivated by this, we define the set of \textit{saturated sets}
		\[\c{F}=\{\nu\sub V(G): \deg_{\c{G}_v}(\nu)\ge \floor{\phi(\nu)}\},\]
		and we observe that $\c{G}_v$ together with an additional copy $\c{K}$ of $K_{s,t}$ which intersects $V_1$ in 1 vertex and $V_2$ in $s-1$ vertices is $\phi$-bounded if and only if $\nu\notin \c{F}$ for all $\nu\sub  \c{K}$.  M
		Motivated by this, we say that a set $\c{K}\sub V(G)$ is \textit{$\c{F}$-good} if $\nu\notin \c{F}$ for any $\nu\sub \c{K}$.  In particular, we note that our desired contradiction will be achieved if we can find strictly more than $|\c{G}_v|$ $\c{F}$-good sets $\c{K}$ which induce $K_{s,t}$'s in $G$ as prescribed by the proposition (since in this case, $\c{G}_v$ together with one of these copies will be a larger $\phi$-bounded collection), and from now on this shall be our goal.
		
		For a copy to be $\c{F}$-good, it must in particular avoid any edges $uv$ with $\{u,v\}\in \c{F}$.  As such, we must restrict ourselves to the following subgraph.
		\begin{claim}\label{cl:H'}
			Let $G'\sub G$ consist of all edges $uv$ with $\{u,v\}\notin \c{F}$. Then $G'$ contains at least $\half K n^{3-3/s}(12\log n)^{16}$ triangles.
		\end{claim}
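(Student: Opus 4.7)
\textbf{Proof proposal for \Cref{cl:H'}.} The plan is a direct double counting: every hyperedge of $H$ projects to a triangle in $G=\partial^2H$, and I will show that only a small fraction of these triangles can be destroyed by removing the saturated pairs. Since $H$ is tripartite, every hyperedge $h\in H$ has exactly one vertex in each $V_i$, so the three pairs of $h$ lie in $V_1\cup V_2$, $V_1\cup V_3$, and $V_2\cup V_3$ respectively. Distinct hyperedges give distinct triangles, so $G$ contains at least $|H|\ge Kn^{3-3/s}(12\log n)^{16}$ such ``transversal'' triangles, by hypothesis (b).

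A transversal triangle fails to lie in $G'$ exactly when one of its three pairs is in $\c{F}$. Observe that a pair $\{u,v\}$ with $u,v\in V_1\cup V_2$ has $a=2$, $b=0$ in the definition of $\phi$, hence $\phi(\{u,v\})=\infty$; therefore no such pair can be saturated. So any bad triangle must use a saturated pair $\nu\in\c{F}$ with one endpoint in $V_3$ and the other in some $V_i$ for $i\in\{1,2\}$. By condition (e), each such pair is contained in at most $Kn^{3-3/s}m_{i,3}^{-1}(12\log n)^{32}$ hyperedges of $H$, which bounds the number of transversal triangles through it.

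Next, I bound the number of saturated pairs of each type via the standard double count on $\c{G}_v$. Each $K_{s,t}\in \c{G}_v$ has at most $\binom{s+t}{2}$ pairs of vertices, so summing codegrees gives
\[\sum_{\nu\in\c{F},\,|\nu|=2}\deg_{\c{G}_v}(\nu)\le \binom{s+t}{2}|\c{G}_v|.\]
By \Cref{cl:floor}, $\deg_{\c{G}_v}(\nu)\ge\floor{\phi(\nu)}\ge\tfrac12\phi(\nu)$ for every $\nu\in\c{F}$. For a relevant pair $\nu\sub V_i\cup V_3$ (with one vertex in each), $\phi(\nu)=M/(\del m_{i,3})$, so
\[|\c{F}\cap \binom{V_i\cup V_3}{2}|\le \frac{2\binom{s+t}{2}\,\del\, m_{i,3}\,|\c{G}_v|}{M}.\]

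Finally, multiplying the per-pair triangle bound from (e) by the count above and summing over $i\in\{1,2\}$, the number of bad triangles is at most
\[\sum_{i=1,2}\frac{2\binom{s+t}{2}\del m_{i,3}|\c{G}_v|}{M}\cdot Kn^{3-3/s}m_{i,3}^{-1}(12\log n)^{32}=O(\del)\cdot\frac{|\c{G}_v|}{M}\cdot Kn^{3-3/s}(12\log n)^{32}.\]
Plugging in the contradiction hypothesis $|\c{G}_v|<M(12\log n)^{-16}$ from \eqref{eq:smallGv} collapses this to $O(\del)\cdot Kn^{3-3/s}(12\log n)^{16}$, and taking $\del$ sufficiently small in terms of $s,t$ makes it less than $\tfrac12 Kn^{3-3/s}(12\log n)^{16}$. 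Subtracting from the total count of transversal triangles yields the claim. The only step requiring care is keeping the bookkeeping straight — specifically, recognizing that pairs in $V_1\cup V_2$ are automatically unsaturated (the reason the proof works at all), and that the $(12\log n)^{32}$ factor from (e) is absorbed by the $(12\log n)^{-16}$ from \eqref{eq:smallGv} plus the $(12\log n)^{16}$ target from (b). No other serious obstacle is expected.
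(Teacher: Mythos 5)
Your argument is correct and is essentially the paper's own proof in slightly different clothing: the paper defines $H'\sub H$ as the hyperedges avoiding saturated pairs and lower bounds $|H'|$, whereas you directly upper bound the transversal triangles destroyed, but the underlying calculation (saturated pairs are Markov-bounded via $\deg_{\c{G}_v}$ and $\phi$, then multiplied by the codegree bound from condition (e), with the $(12\log n)^{32}$ absorbed by \eqref{eq:smallGv}) is identical. The observation that pairs within $V_1\cup V_2$ cannot be saturated since $\phi=\infty$ matches the paper's remark that $3\notin\{i,j\}$ gives no bad pairs.
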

		As an aside, this will be the only part of the proof where we use that $G$ is the shadow of a hypergraph.
		\begin{proof}
			For integers $i,j\in \{1,2,3\}$, we say that an edge $\nu=\{v_i,v_j\}$ in $G$ is $(i,j)$-bad if $\nu\in \c{F}$ and $v_i\in V_i,\ v_j\in V_j$.  As a subclaim, we aim to show that the number of $(i,j)$-bad pairs is at most $\rec{6}m_{i,j}(\log n)^{-16}$ for any $i,j$.
			
			To show this, fix some $i,j$ and let $\nu$ be an $(i,j)$-bad pair.  If $3\notin \{i,j\}$ then $\phi(\nu)=\infty$, so no such pair exists and there is nothing to prove.  Thus without loss of generality we can assume $j=3$.  By definition of $\c{F}$, \Cref{cl:floor}, and the definition of $\phi$; having $\nu\in \c{F}$ means \[\deg_{\c{G}_v}(\nu)\ge \floor{\phi(\nu)}\ge \half \phi(\nu)=\half \del^{-1} m_{i,3}^{-1}M.\]With this, a  double counting argument gives
			\begin{align*}\#(i,3)\textrm{-bad pairs}&\le \# \left[\textrm{edges } \nu \textrm{ with }\deg_{\c{G}_v}(\nu)\ge \half \del^{-1} m_{i,3}^{-1}M \right]\\ &\le \left(\half \del^{-1}m_{i,3}^{-1}M \right)^{-1}\cdot \sum_{\nu:|\nu|=2} \deg_{\c{G}_v}(\nu) = 2\del m_{i,3} M^{-1} \cdot {s+t\choose 2} |\c{G}_v|\\ &< 2\del m_{i,3} M^{-1} \cdot {s+t\choose 2} M(12 \log n)^{-16}\le \rec{6} m_{i,3} (12 \log n)^{-16},\end{align*}
			with the second to last inequality using \eqref{eq:smallGv} and the last inequality using that $\del$ is sufficiently small, proving the sub-claim.
			
			Now define $H'\sub H$ to consist of all hyperedges which do not contain pairs $\{u,v\}\in \c{F}$, and observe that the number of triangles in $G'$ is at least $|H'|$, so it suffices to lower bound $|H'|$.  Also note by (e) that each $(i,j)$-bad edge is contained in at most $Kn^{3-3/s} m_{i,j}^{-1}(12\log n)^{32}$ hyperedges of $H$, so by the subclaim, the number of hyperedges removed going from $H$ to $H'$ is at most
			\[\sum_{i,j} \rec{6}m_{i,j}(12 \log n)^{-16}\cdot Kn^{3-3/s} m_{i,j}^{-1}(12\log n)^{32}=  \half K n^{3-3/s}(12 \log n)^{16}.\]
			Thus by (b), \[|H'|\ge |H|-\half K n^{3-3/s}(12\log n)^{16}\ge \half K n^{3-3/s}(12\log n)^{16},\] proving the result.
		\end{proof}
		
		By working with $G'$, we will automatically guarantee that every edge of the $K_{s,t}$'s contained in $G'$ lie outside of $\c{F}$.  To avoid larger subsets of $\c{F}$, we define for $\nu\sub V(G)$ the \textit{link set} 
		\[\c{J}(\nu)=\{u\in V(G): \nu\cup \{u\}\in \c{F}\}.\]
		The idea here is that if we have already selected some  $\c{F}$-good set $\c{K}'$ that we aim to include in our ultimate copy $\c{K}$ of $K_{s,t}$, then $\c{K}'\cup \{u\}$ will continue to be $\c{F}$-good if and only if $u\notin \c{J}(\nu)$ for any $\nu\sub \c{K}'$.  Thus $\bigcup_{\nu\sub \c{K}'} \c{J}(\nu)$ represents the set of ``bad choices'' for $u$ given the partially constructed copy $\c{K}'$. 
		The following claim gives an upper bound on the number of these ``bad choices'' for $u$ which lie within some given set of vertices $V$.

		\begin{claim}\label{cl:link}
			Let $\nu,V\sub V(G)$.  If $\phi(\nu\cup \{u\})=\infty$ for all $u\in V$, then $\c{J}(\nu)\cap V=\emptyset$, and otherwise
			\[|\c{J}(\nu)\cap V|\le 2(s+t) \frac{\phi(\nu)}{\min_{u\in V} \phi(\nu\cup \{u\})}.\]
		\end{claim}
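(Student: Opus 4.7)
The plan is to prove the claim by double counting pairs $(u,F)$ with $u\in \c{J}(\nu)\cap V$, $F\in\c{G}_v$, and $\nu\cup\{u\}\sub V(F)$. The first assertion is essentially by definition: if $\phi(\nu\cup\{u\})=\infty$ then $\floor{\phi(\nu\cup\{u\})}=\infty$ exceeds the finite quantity $\deg_{\c{G}_v}(\nu\cup\{u\})$, so $\nu\cup\{u\}\notin\c{F}$ and $u\notin\c{J}(\nu)$. Applying this to every $u\in V$ takes care of the first case, and in the second I may replace $V$ by its subset consisting of those $u$ with $\phi(\nu\cup\{u\})<\infty$, so that $\min_{u\in V}\phi(\nu\cup\{u\})$ is finite and positive.

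For the upper bound on the count of pairs, I would use $\phi$-boundedness of $\c{G}_v$ (which it satisfies by virtue of being chosen as a maximum $\phi$-bounded collection): every $F\in\c{G}_v$ with $\nu\sub V(F)$ contributes at most $|V(F)\cap V|\le v(K_{s,t})=s+t$ such pairs, and summing yields a total of at most $(s+t)\deg_{\c{G}_v}(\nu)\le (s+t)\phi(\nu)$. For the lower bound, each $u\in\c{J}(\nu)\cap V$ satisfies $\deg_{\c{G}_v}(\nu\cup\{u\})\ge \floor{\phi(\nu\cup\{u\})}$ by definition of $\c{F}$; then invoking \Cref{cl:floor} gives $\floor{\phi(\nu\cup\{u\})}\ge \half \phi(\nu\cup\{u\})\ge \half\min_{u'\in V}\phi(\nu\cup\{u'\})$. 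Summing over $u$ and combining with the upper bound produces the claimed inequality $|\c{J}(\nu)\cap V|\le 2(s+t)\phi(\nu)/\min_{u'\in V}\phi(\nu\cup\{u'\})$.

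The main technical obstacle is verifying the hypothesis of \Cref{cl:floor}, which requires $|(\nu\cup\{u\})\cap(V_1\cup V_2)|\le s$ and $|(\nu\cup\{u\})\cap V_3|\le t$. If these bounds are violated then no copy in $\c{G}_v$ can contain $\nu\cup\{u\}$, so $\deg_{\c{G}_v}(\nu\cup\{u\})=0$, which forces $\phi(\nu\cup\{u\})<1$ in order for $u$ to lie in $\c{J}(\nu)$ at all; in the intended downstream usage $\nu$ will be a partial vertex set of a $K_{s,t}$ being built greedily, so this degenerate regime will not actually be encountered. Modulo this bookkeeping, the proof reduces to a routine application of double counting together with the two auxiliary facts that $\c{G}_v$ is $\phi$-bounded and that $\floor{\phi}\ge \half\phi$ on the relevant range.
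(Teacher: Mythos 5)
Your proof is correct and follows essentially the same double-counting argument as the paper; the paper phrases it by manipulating $\sum_{u\in V(G)}\deg_{\c{G}_v}(\nu\cup\{u\}) = (s+t)\deg_{\c{G}_v}(\nu)$ directly rather than counting pairs $(u,F)$, but these are the same computation. Your explicit observation that \Cref{cl:floor} has a cardinality hypothesis that must be checked (and is always satisfied in the downstream applications, where $\nu\cup\{u\}$ is a partial $K_{s,t}$ with at most $s$ vertices in $V_1\cup V_2$ and at most $t$ in $V_3$) is a point the paper passes over silently, so flagging it is a welcome bit of care rather than a deviation.
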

		\begin{proof}
			
			If $\phi(\nu\cup \{u\})=\infty$, then trivially $\deg(\nu\cup \{u\})<\phi(\nu\cup \{u\})$, which means $\nu\cup \{u\}\notin \c{F}$ and hence $u\notin \c{J}(\nu)$.  This gives the result if $\phi(\nu\cup \{u\})=\infty$ for all $u\in V$, so from now on we assume $\min_{u\in V} \phi(\nu\cup \{u\})<\infty$.
			
			Observe that
			\[\sum_{u\in \c{J}(\nu)\cap V} \deg_{\c{G}_v}(\nu\cup \{u\})\le \sum_{u\in V(G)} \deg_{\c{G}_v}(\nu\cup \{u\})= (s+t) \deg_{\c{G}_v}(\nu)\le (s+t) \phi(\nu),\]
			where the equality used that each hyperedge $h$ containing $\nu$ is counted exactly $(s+t)$ times in the second summation (namely, once  for each $u\in h$), and the last inequality used that $\c{G}_v$ is $\phi$-bounded.  On the other hand, the definition of $\c{J}(\nu)$ yields
			\[\sum_{u\in \c{J}(\nu)\cap V} \deg_{\c{G}_u}(\nu\cup \{u\})\ge \sum_{u\in \c{J}(\nu)\cap V} \floor{\phi(\nu\cup \{u\})}\ge |\c{J}(\nu)\cap V|\cdot \min_{u\in V} \floor{\phi(\nu\cup \{u\})}.\]
			Rearranging these two inlined expressions gives 
			\[|\c{J}(\nu)\cap V|\le (s+t) \frac{\phi(\nu)}{\min_{u\in V} \floor{\phi(\nu\cup \{u\})}},\]
			and the final result follows by using \Cref{cl:floor} to get rid of the floor.
		\end{proof}
		
		With these preliminaries established, we begin our main argument for finding many copies $\c{K}$ which are $\c{F}$-good through a modification of the usual ``star counting'' argument of K\H{o}v\'ari-S\'os-Tur\'an.
		\begin{claim}\label{cl:P}
			Let $\c{P}$ denote the set of pairs $(S,v)$ such that the following holds:
			\begin{itemize}
				\item $v\in V_3$,
				\item $S\sub N(v)$ with $|S\cap V_1|=1$ and $|S\cap V_2|=s-1$,
				\item The vertex $u_1\in S\cap V_1$ is adjacent to every vertex of $S\cap V_2$, and
				\item $S\cup \{v\}$ is $\c{F}$-good.
			\end{itemize}
			
			Then \[|\c{P}|\ge (8s)^{1-s} \ell n^{2-1/s}\left(\ell^{-1}Kn^{1-2/s}(12\log n)^{16}\right)^{s-1}.\]
		\end{claim}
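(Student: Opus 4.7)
The plan is to perform a Kővári–Sós–Turán style double count on the triangles of $G'$ provided by \Cref{cl:H'}, combined with a greedy extension using \Cref{cl:link} to rule out extensions failing $\c{F}$-goodness. Since $G=\partial^2 H$ is tripartite, every triangle of $G'$ has exactly one vertex in each $V_i$; for each pair $(v,u_1)$ with $v\in V_3$, $u_1\in V_1$ and $v u_1\in G'$, write $t_{v,u_1}:=|N_{G'}(v)\cap N_{G'}(u_1)\cap V_2|$, so that \Cref{cl:H'} gives $\sum_{vu_1\in G'} t_{v,u_1}\ge E:=\tfrac12 K n^{3-3/s}(12\log n)^{16}$.

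Given such a pair $(v,u_1)$, I would greedily build an ordered tuple $(u_2,\dots,u_s)\in\Gamma_{v,u_1}^{s-1}$, where $\Gamma_{v,u_1}:=N_{G'}(v)\cap N_{G'}(u_1)\cap V_2$, avoiding at each step both previously chosen $u_j$ and any $u_i$ saturating some subset of $\{v,u_1,\dots,u_i\}$. The key observation is that any $\nu\subseteq\{v,u_1,\dots,u_{i-1}\}$ with $\phi(\nu\cup\{u_i\})<\infty$ must contain $v$, since $u_i\in V_2$ forces $\nu\cup\{u_i\}$ to meet $V_3$ for $\phi$ to be finite. The singletons $\nu\in\{\{v\},\{u_1\},\{u_j\}\}$ cause no issue: the first two are excluded by $u_i\in\Gamma_{v,u_1}\subseteq N_{G'}(v)\cap N_{G'}(u_1)$ together with the definition of $G'$, while $\phi(\{u_j,u_i\})=\infty$ for $u_j,u_i\in V_2$. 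For the remaining $\nu$ with $v\in\nu$ and $|\nu|\ge 2$, the ratio $\phi(\nu)/\phi(\nu\cup\{u\})$ with $u\in V_2$ telescopes to $\del\ell^{-1}K n^{1-2/s}$ (adding $u$ simply increments the exponent $a-1$ in the denominator of $\phi$ without changing the $V_1$-intersection status or $b$), so \Cref{cl:link} bounds the bad $u_i$ arising from each such $\nu$ by $2(s+t)\del\ell^{-1}K n^{1-2/s}$; summing over the fewer than $2^{s-1}$ such subsets yields at most $C_{s,t}\del\ell^{-1}K n^{1-2/s}$ bad choices per step.

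Setting $t_0:=2C_{s,t}\del\ell^{-1}K n^{1-2/s}$, every pair with $t_{v,u_1}\ge t_0$ admits at least $t_{v,u_1}/2$ valid choices at each of the $s-1$ greedy steps, hence at least $(t_{v,u_1}/2)^{s-1}$ ordered valid extensions. The low-codegree pairs contribute at most $m_{1,3}t_0\le\ell n^{2-1/s}\cdot t_0=O_{s,t}(\del K n)$ to $\sum t_{v,u_1}$, which is $\le E/2$ for $n$ large since $E/(Kn)\ge\tfrac12 n^{2-3/s}(12\log n)^{16}$ is enormous for $s\ge 3$. Thus the set $G_X$ of high-codegree pairs satisfies $\sum_{G_X}t_{v,u_1}\ge E/2$; Jensen's inequality applied to the convex function $x^{s-1}$, together with $|G_X|\le m_{1,3}$ (so $|G_X|^{2-s}\ge m_{1,3}^{2-s}$ for $s\ge 3$), gives
\[
\sum_{(v,u_1)\in G_X} t_{v,u_1}^{s-1} \ge |G_X|^{2-s}\left(\sum_{G_X} t_{v,u_1}\right)^{s-1} \ge m_{1,3}^{2-s}(E/2)^{s-1}.
\]
Since each $(S,v)\in\c{P}$ corresponds to exactly $(s-1)!$ ordered tuples, we obtain $(s-1)!\,|\c{P}|\ge 2^{1-s}\sum_{G_X} t_{v,u_1}^{s-1}\ge 4^{1-s}m_{1,3}^{2-s} E^{s-1}$. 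Substituting $m_{1,3}\le\ell n^{2-1/s}$ and the bound on $E$, simplifying the exponent of $n$ to $s-1+1/s$, and using $(s-1)!\le s^{s-1}$ yields the target $|\c{P}|\ge(8s)^{1-s}\ell n^{2-1/s}(\ell^{-1}K n^{1-2/s}(12\log n)^{16})^{s-1}$.

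The main obstacle is controlling the $2^i$-type growth of relevant subsets $\nu$: the threshold $t_0$ must dominate the bad bound times a combinatorial factor of $2^{s-1}$, yet remain small relative to the average codegree $E/m_{1,3}\approx\ell^{-1}K n^{1-2/s}(12\log n)^{16}$. This is precisely what the $(12\log n)^{16}$ slack in \Cref{cl:H'} buys us, letting $\del$ be chosen as an absolute constant depending only on $s,t$.
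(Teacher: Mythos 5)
Your proof is correct and follows essentially the same strategy as the paper's: a K\H{o}v\'ari--S\'os--Tur\'an style star-count on triangles in $G'$, a greedy extension through $V_2$ controlled by \Cref{cl:link} via the telescoping $\phi$-ratio $\delta\ell^{-1}Kn^{1-2/s}$, a codegree threshold to discard low-codegree $V_1V_3$-pairs, and Jensen's inequality to lower-bound the total count. (One small typo: the low-codegree contribution is $O_{s,t}(\delta K n^{3-3/s})$, not $O_{s,t}(\delta Kn)$, though this does not affect the comparison with $E$ and the conclusion still holds.)
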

		\begin{proof}
			Given an edge $uv$ in $G'$, let $\deg(u,v)$ denote the number of common neighbors of $u,v$ in $G'$, i.e.\ the number of triangles containing the edge $uv$.  Let $E$ be the set of edges $uv$ in $V_1\cup V_3$, and let $E'\sub E$ be those edges with
			\begin{equation}\deg(u,v)\ge \ell^{-1}Kn^{1-2/s} \ge 4s,\label{eq:PCodegree}\end{equation}
			with this second inequality holding since $\ell\le \del K$ and $\del$ is sufficiently small.  Note that
			\begin{equation}\sum_{uv\in E'} \deg(u,v)\ge \frac{1}{2} Kn^{3-3/s}(12\log n)^{16}- \ell^{-1}Kn^{1-2/s}\cdot |E|\ge \frac{1}{4} Kn^{3-3/s}(12\log n)^{16},\label{eq:PTriangles}\end{equation}
			where the first inequality used that the total number of triangles in $G'$ is at least $\half Kn^{3-3/s}(12\log n)^{16}$  by \Cref{cl:H'} and that the number of triangles not containing an edge of $E'$ is at most $\ell^{-1}Kn^{1-2/s}\cdot m_{1,3}$ by definition of $E'$; and the second inequality used (c) and (a) to get $|E|\le m_{1,3}\le \ell |V_1|^{2-1/s}\le  \ell n^{2-1/s}$ and that $n$ is sufficiently large.
			
			We will now build elements of $\c{P}$ iteratively as follows:  we start with any edge $u_1v\in E'$ with $u_1\in V_1$ (noting that $\{v,u_1\}$ is $\c{F}$-good by definition of it being an edge in $G'$), and given that we have already selected $v,u_1,\ldots,u_{i-1}$ with $i\le s$ such that $\{v,u_1,\ldots,u_{i-1}\}$ is $\c{F}$-good, we next choose $u_i$ to be any vertex in $N(u_1)\cap N(v)\sm \{u_2,\ldots,u_{i-1}\}$ such that $\{v,u_1,\ldots,u_i\}$ is $\c{F}$-good.
			
			Note that if $(S,v)$ with $S=\{u_1,\ldots,u_s\}$ is the output of this procedure, then $(S,v)$ is an element of $\c{P}$ by construction, so it remains to argue that there are many ways to complete this procedure.  Observe that given $u_1\in V_1,v\in V_3$, the number of choices for $u_i$ with $i>1$ in this algorithm is at least
			\[\deg(u_1,v)-(i-1)-\sum_{\nu\sub \{v,u_1,\ldots,u_{i-1}\}}|\c{J}(\nu)\cap (V_2\cap N(v))|,\]
			where the $i-1$ term denotes the number of choices $u_i=u_j$ for some $j<i$, and the summation is an overestimate for the number of choices $u_i\in V_2\cup N(v)$ which causes $\{v,u_1,\ldots,u_i\}$ to not be $\c{F}$-good.

			We aim to show that each term in the sum above is at most $2(s+t) \del \ell^{-1} Kn^{1-2/s}$.  Indeed, if $\nu$ contains $v$ and at least one vertex of $V_1\cup V_2$, then $\phi(\nu)=\del \ell^{-1} Kn^{1-2/s}\cdot \phi(\nu\cup \{u_i\})\ne \infty$ for all $u_i\in V_2$, so the result follows from \Cref{cl:link}.  Similarly if $v\notin \nu$ then $\phi(\nu\cup \{u_i\})=\infty$ for all $u_i\in V_2$ (since $\nu\cap V_3=\emptyset$), so \Cref{cl:link} gives that this set is empty.  The only remaining case is $\nu=\{v\}$.  By definition of $G'$, any $u_i\in N(v)$ is such that $\{u_i,v\}$ is $\c{F}$-good, so $\c{J}(\nu)\cap N(v)=\emptyset$ in this case, again giving the result.
			
			In total, we conclude that for $i>1$ the number of choices for $u_i$ in the algorithm is always at least
			\[    \deg(u_1,v)-s-2^{s+t}\cdot 2(s+t) \del \ell^{-1} Kn^{1-2/s}\ge \half \deg(u_1,v)\]
			with the last step holding for $\del$ sufficiently small by \eqref{eq:PCodegree}.  This in turn implies that the total number of ways of proceeding through the algorithm is at least
			\[
			\begin{aligned}
				\sum_{uv\in E'} \left(\half \deg(u,v)\right)^{s-1}&\ge |E'|\left(\half |E'|^{-1}\sum_{uv\in E'} \deg(u,v)\right)^{s-1}\\
				&\ge   \ell n^{2-1/s}\left(\half\cdot \ell^{-1}n^{-2+1/s}\cdot \frac{1}{4}Kn^{3-3/s}(12\log n)^{16}\right)^{s-1},
			\end{aligned}
			\]
			where the first inequality used convexity and the second used \eqref{eq:PTriangles} and $|E'|\le m_{1,3}\le \ell n^{2-1/s}$.  This gives the desired result after noting that number of \textit{distinct} $(S,v)$ pairs generated by the algorithm is at least the quantity above (i.e.\ the number of ways of terminating with the algorithm) divided by $s!\le s^{s-1}$ (since this is an upper bound for the number of ways a given pair $(S,v)$ can be generated by this algorithm).
		\end{proof}
		A similar argument gives the following.
		\begin{claim}\label{cl:S}
			There exist $\Om(\ell^{s-1} n^{s-1+1/s} (\ell^{3-2s} K^{s-1})^t(\log n)^{16(s-1)})$ sets $\c{K}=\{u_1,\ldots,u_s,v_1,\ldots,v_t\}$ forming a $K_{s,t}$ in $G'$ such that $u_1\in V_1$, $u_i\in V_2$ for $i>1$, $v_j\in V_3$ for all $j$, and such that $\c{K}$ is $\c{F}$-good.
		\end{claim}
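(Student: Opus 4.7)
My plan is to mirror the argument for \Cref{cl:P}, extending each broom $(S, v_1) \in \c{P}$ to a full $\c{F}$-good copy of $K_{s,t}$ by iteratively picking $v_2, \ldots, v_t \in W(S) \setminus \{v_1, \ldots, v_{j-1}\}$ (for $j = 2, \ldots, t$), where $W(S) := \{v \in V_3 : (S, v) \in \c{P}\}$. The key observation is that by restricting $v_j$ to $W(S)$ one automatically avoids every potentially bad subset $\nu \sub S$: any $v \in W(S)$ satisfies $\nu \cup \{v\} \notin \c{F}$ for every such $\nu$, because $S \cup \{v\}$ is already $\c{F}$-good by definition of $\c{P}$. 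The only remaining bad $\nu$'s at step $j$ are those with $\nu \cap S \ne \emptyset$ and $\nu \cap \{v_1, \ldots, v_{j-1}\} \ne \emptyset$; for each such $\nu$, $\phi(\nu) < \infty$ and $\phi(\nu)/\phi(\nu \cup \{v\}) = \del \ell^{3-2s}K^{s-1}$ for $v \in V_3$, so \Cref{cl:link} yields $|\c{J}(\nu) \cap V_3| \le 2(s+t)\del \ell^{3-2s}K^{s-1}$. Summing over the $O(1)$ many such $\nu$ bounds the total bad at each step by $O(\del \ell^{3-2s}K^{s-1})$.

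To actually run this iteration I first need an upper bound on the number $\c{S}$ of admissible base sets. Condition (f) gives $|\c{S}| \le |V_1|\binom{\ell|V_1|^{1-1/s}(12\log n)^{16}}{s-1} = O(\ell^{s-1}n^{s-1+1/s}(12\log n)^{16(s-1)})$. Combined with $|\c{P}| \ge P_0 = \Om(\ell^{2-s}K^{s-1}n^{s-1+1/s}(12\log n)^{16(s-1)})$ from \Cref{cl:P}, the average of $|W(S)|$ is $\Om(\ell^{3-2s}K^{s-1})$, which exceeds the step-$j$ bad bound by a factor of $\Om(\del^{-1})$. For $\del$ sufficiently small the threshold $T = O(\del \ell^{3-2s}K^{s-1})$ therefore satisfies $T|\c{S}| \le |\c{P}|/4$, so $\sum_{S : |W(S)| \ge 2T}|W(S)| \ge |\c{P}|/2$. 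For every such $S$ and every $(S, v_1)$ with $v_1 \in W(S)$, the iteration produces at least $(|W(S)|/2)^{t-1}$ ordered valid extensions.

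Summing over $(S, v_1)$ and dividing by $t!$ (to account for the $(t-1)!$ orderings of $(v_2, \ldots, v_t)$ and the $t$ choices of distinguished $v_1$ per unordered $K_{s,t}$):
\[\#(\c{F}\text{-good }K_{s,t}\text{'s}) \ge \frac{1}{2^{t-1}t!}\sum_{S : |W(S)| \ge 2T}|W(S)|^t \ge \frac{(|\c{P}|/2)^t}{2^{t-1}\,t!\cdot|\c{S}|^{t-1}},\]
where the last step is Jensen's inequality applied to the convex function $x \mapsto x^t$ (upper-bounding the number of $S$'s above the threshold by $|\c{S}|$). Substituting the bounds on $|\c{P}|$ and $|\c{S}|$ and expanding the exponents, one checks routinely that $(2-s)t - (s-1)(t-1) = (3-2s)t + (s-1)$ and $(s-1+1/s)t - (s-1+1/s)(t-1) = s-1+1/s$, yielding the claimed lower bound $\Om(\ell^{s-1}n^{s-1+1/s}(\ell^{3-2s}K^{s-1})^t(\log n)^{16(s-1)})$.

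The main obstacle will be controlling the bad set: a naive estimate that includes $\nu \sub S$ in the bad count gives $O(\del^s \ell^{2-s}K^{s-1}n^{s-1+1/s})$ per step (via the weak bound $\deg_{\c{G}_v}(\nu_S) \le |\c{G}_v|$), which is polynomially larger in $n$ than the average $|W(S)|$ and would wreck the threshold argument. Restricting $v_j$ to $W(S)$ precisely kills these contributions because $\c{J}(\nu) \cap W(S) = \emptyset$ whenever $\nu \sub S$ with $\nu\ne\emptyset$, leaving only the much smaller $O(\del \ell^{3-2s}K^{s-1})$ contribution handled cleanly by \Cref{cl:link}.
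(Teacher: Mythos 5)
Your proposal is correct and follows essentially the same route as the paper's proof: defining $W(S)=\c{P}(S)$, exploiting the fact that restricting the extension vertices $v_j$ to $W(S)$ automatically kills the $\nu\subseteq S$ bad case (the crux, as you note), using \Cref{cl:link} for the remaining $\nu$ intersecting both $S$ and $\{v_1,\ldots,v_{j-1}\}$, thresholding on $|W(S)|$, and applying Jensen. The only cosmetic differences are that you iterate from pairs $(S,v_1)\in\c{P}$ rather than bases $S\in\c{S}'$ and replace $|V_1|$ by $n$ at the outset rather than at the end, both of which are immaterial since the exponent of $|V_1|$ in the final expression is negative.
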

		As a point of comparison for later, we note that the number of these $\c{K}$ that we guarantee is substantially more than
		\[M=\ell^{s-1} n^{s-1+1/s}(\ell^{3-2s}K^{s-1})^t,\]
		provided $n$ is sufficiently large.
		\begin{proof}
			Given a set $S$, define $\c{P}(S)=\{v:(S,v)\in \c{P}\}$ and let $\deg(S)=|\c{P}(S)|$.  Define $\c{S}$ to be the collection of sets $S$ with $(S,v)\in \c{P}$ for some $v$, and let $\c{S}'$ be the collection of sets $S$ with 
			\[
			\deg(S)\ge (8s)^{-s}\ell n^{2-1/s}|V_1|^{-1}(\ell^{-2}K n^{1-2/s}|V_1|^{-1+1/s})^{s-1},
			\]
			noting that $|V_1|\le n$ and $\ell\le \del K^{(s-1)/(2s-3)}$ implies
			\begin{equation}\deg(S)\ge (8s)^{-s}\ell^{3-2s}K^{s-1} \ge 4t\textrm{ for }S\in\c{S}'.\label{eq:SCodegree}\end{equation}
			Also note that \begin{equation}\label{eq:SSize}
				|\c{S}'|\le |\c{S}|\le |V_1| (\ell |V_1|^{1-1/s}(12\log n)^{16})^{s-1},
			\end{equation}
			since every $S\in \c{S}$ includes a vertex of $V_1$ together with $s-1$ of its neighbors in $V_2$ (and the number of these neighbors is bounded by (f)); and that
			\begin{align}
				\sum_{S\in \c{S}'} \deg(S)&\ge |\c{P}|-(8s)^{-s}\ell n^{2-1/s}|V_1|^{-1}(\ell^{-2}K n^{1-2/s}|V_1|^{-1+1/s})^{s-1} \cdot |\c{S}|\nonumber \\
				&\ge |\c{P}|-(8s)^{-s}\ell n^{2-1/s}|V_1|^{-1}(\ell^{-2}K n^{1-2/s}|V_1|^{-1+1/s})^{s-1} \cdot |V_1| (\ell |V_1|^{1-1/s}(12\log n)^{16})^{s-1}\nonumber\\ 
				&= |\c{P}|-(8s)^{-s}\ell n^{2-1/s}(\ell^{-1} K n^{1-2/s}(12\log n)^{16})^{s-1} \ge \half |\c{P}| \label{eq:STriangle},
			\end{align}    
			where the first inequality used that $\sum_{S\in \c{S}} \deg(S)=|\c{P}|$ and that the number of pairs missed by summing over $\c{S}'$ is at most the quantity we subtract by, the second inequality used \eqref{eq:SSize}, and the final inequality used the lower bound on $|\c{P}|$ from \Cref{cl:P} (which has a $1-s$ exponent for $8s$ instead of a $-s$ exponent).
			
			We will iteratively form our sets $\c{K}$ as follows: start with any set $S\in \c{S}'$.  Iteratively given some $v_1,\ldots,v_{i-1}\in V_3$, choose $v_i\in V_3\sm\{v_1,\ldots,v_{i-1}\}$ to be such that $(S,v_i)\in \c{P}$ and such that $S\cup \{v_1,\ldots,v_{i}\}$ is $\c{F}$-good, and finally output the set $\c{K}=S\cup \{v_1,\ldots,v_t\}$.  It is not difficult to see that each $\c{K}$ outputted by this procedure is of the desired form, so it remains to show that there are many ways of going through the procedure.  
			
			Given some $S\in \c{S}'$, it is not difficult to see that the number of choices for $v_i$ is at least
			\[\deg(S)-(i-1)-\sum_{\nu\sub S\cup \{v_1,\ldots,v_{i-1}\}}|\c{J}(\nu)\cap (V_3\cap \c{P}(S))|.\]
			We aim to show that each term in this sum is at most $2(s+t)\del \ell^{3-2s}K^{s-1}$.  Indeed, if $\nu$ intersects both $V_1\cup V_2$ and $V_3$ then $\phi(\nu)=\del \ell^{3-2s}K^{s-1} \phi(\nu\cup \{v_i\})\ne \infty$ for all $v_i\in V_3$, so the result follows from \Cref{cl:link}.  If $\nu$ is disjoint from $V_1\cup V_2$ then $\phi(\nu\cup \{v_i\})=\infty$ for all $v_i\in V_3$, so \Cref{cl:link} gives that this set is empty.  If $\nu$ is disjoint from $V_3$ and $v_i\in \c{P}(S)$, then $\nu\cup \{v_i\}\sub S\cup \{v_i\}$ with $(S,v_i)\in \c{P}$, so by definition of $\c{P}$ we have that $\nu\cup \{v_i\}$ is $\c{F}$-good, so again we conclude that this set is empty, giving the result.
			
			In total, we conclude that the number of choices for $v_i$ in the algorithm is always at least 
			\[\deg(S)-t-2^{s+t}\cdot 2(s+t)\del \ell^{3-2s}K^{s-1} \ge \half \deg(S), \]
			with the last step holding for $\del$ sufficiently small by \eqref{eq:SCodegree}.  This in turn implies that the total number of ways of proceeding through the algorithm is at least 
			
			\begin{align*}\sum_{S\in \c{S}'} \left(\half \deg(S)\right)^t&\ge |\c{S}'|\left(\half |\c{S}'|^{-1}\sum_{S\in \c{S}'}\deg(S)\right)^t\ge |\c{S}'|\left(\frac{1}{4} |\c{S}'|^{-1}|\c{P}|\right)^t\\
				&\ge \Om\left(|V_1|(\ell |V_1|^{1-1/s}(12\log n)^{16})^{s-1}\cdot (\ell n^{2-1/s}|V_1|^{-1}(\ell^{-2}K n^{1-2/s}|V_1|^{-1+1/s})^{s-1})^t \right),\end{align*}
			where the first inequality used convexity, the second used \eqref{eq:STriangle}, and the last used \eqref{eq:SSize} and \Cref{cl:P}.  Using $|V_1|\le n$ (together with the fact that the exponent of $|V_1|$ in the expression above is negative), we in total get the desired lower bound for the number of ways to complete the procedure, and since each $\c{K}$ arises in at most $O(1)$ ways from this procedure, we conclude the desired result.
		\end{proof}
		Observe that the number of $\c{K}$ from \Cref{cl:S} is substantially more than $|\c{G}_v|<M(12 \log n)^{-16}$ provided $\ell$ (and hence $K$ and hence $n$) is sufficiently large. In particular, there exists some $\c{K}$ as in \Cref{cl:S} which is not in $\c{G}_v$.  By construction, $\c{G}_v\cup \{\c{K}\}$ is a larger $\phi$-bounded collection of $K_{s,t}$'s, contradicting our choice of $\c{G}_v$.  We conclude the result.
	\end{proof}
	\section{Concluding Remarks}
	In this paper we proved a balanced supersaturation result for $K_{s,t}^{(3)}$, yielding improved ranges on the $r$ for which we can give tight bounds for the random Tur\'an number of expansions $K_{s,t}^{(r)}$.  In particular, our methods yield an optimal dependency on $r$ for $s=4$, and it is natural to try and extend this to other values of $s$.  As discussed just after \Cref{cor:4}, the easiest $s$ to solve this problem for are those of the form $1+{k\choose 2}$, motivating the following.
	\begin{prob}
		Establish tight bounds for $\ex(G_{n,p}^r,K_{7,t}^{(r)})$ whenever $r\ge 5$ and $t$ is sufficiently large.
	\end{prob}
	Analogous to the methods used in this paper, the most natural approach for solving this problem would be to prove a balanced supersaturation result for $K_{s,t}^{(4)}$ directly, after which one can use our lifting theorems to give strong bounds for all $r\ge 5$.
	
	While the above is a natural direction to pursue, there are significant complications in going from uniformity 3 to uniformity 4 with our methods.  Notably, our present proof for 3-uniform hypergraphs largely boiled down to using two distinct arguments based on whether our 3-graph $H$ has a large or small number of edges in the 2-shadow $\partial^2 H$ relative to the number of edges in $H=\partial^3 H$.  For uniformity 4, one seemingly needs to consider now how the relative sizes of all the shadows $\partial^k H$ for $2\le k\le 4$ compare to each other, giving a much more complex set of possible parameters that one would have to build arguments around.  

	In \Cref{prop:vanillaSupersaturation} we recorded a (non-balanced) supersaturation result for $K_{s,t}^{(3)}$.  We claim without proof that one can easily lift this supersaturation result to all $r$-expansions of $K_{s,t}$ using the methods from \cite{nie2024random} to yield the following.
	
	\begin{prop}\label{prop:vanillaRUniform}
		For all $t\ge s\ge r$, there exists a sufficiently large constant $C$ such that if $H$ is an $n$-vertex $r$-graph with $m$ edges such that 
		$$m\ge\max\{n^{r-1},~n^{r-\frac{r-1}{2}(3/s-1/st)}\}(\log n)^{C},$$ 
		then $H$ contains at least
		$$
		\Omega^*(m^{st}n^{s+t-2st})
		$$
		copies of $K^{(r)}_{s,t}$.
	\end{prop}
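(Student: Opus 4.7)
The plan is to lift \Cref{prop:vanillaSupersaturation} from uniformity 3 to uniformity $r$ using the methodology from \cite{nie2024random}. Recall that when \Cref{lemma:BSviaSHADOW} is applied with $r_0=3$, the 3-uniform threshold $M_3(n)$ is transformed into $M_r(n)=\max\{M_3(n)^{(r-1)/2}n^{-(r-3)/2},\,n^{r-1}\}(\log n)^{C}$. Substituting $M_3(n)=n^{3-3/s+1/st}(\log n)^{C}$ (coming from \Cref{prop:vanillaSupersaturation}) yields exactly the threshold in \Cref{prop:vanillaRUniform}, strongly suggesting that the proof should proceed via a vanilla analog of \Cref{lemma:BSviaSHADOW}.

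Concretely, given an $r$-graph $H$ satisfying the hypothesis, I would first apply an $r$-uniform analog of \Cref{lem:regularize} to extract a near-regular subhypergraph $H'\subseteq H$ with $|H'|\ge m\log(n)^{-O(1)}$ such that for every $k<r$ and every $k$-set $S$ in $\partial^k H'$, the codegree $\deg_{H'}(S)$ is approximately a common parameter $\Delta_k$. In particular $|\partial^3 H'|=\Theta^*(|H'|/\Delta_3)$, so understanding the 3-shadow reduces to understanding $\Delta_3$.

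The heart of the argument is to verify that the hypothesis on $m$ forces $|\partial^3 H'|\ge n^{3-3/s+1/st}(\log n)^{O(1)}$; if this holds then \Cref{prop:vanillaSupersaturation} applies to $\partial^3 H'$ and produces a collection $\mathcal{H}_0$ of size $\Omega^*(|\partial^3 H'|^{st}n^{s+t-2st})$ of copies of $K_{s,t}^{(3)}$ in $\partial^3 H'$. A translation step in the spirit of \Cref{lem:translate} then extends each $K_{s,t}^{(3)}\in\mathcal{H}_0$ into a copy of $K_{s,t}^{(r)}$ in $H'$: for each of its $st$ constituent 3-sets one independently chooses an edge of $H'$ containing it (avoiding previously used vertices), which the regularity supplies in $\Omega(\Delta_3)$ ways. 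Since $(|\partial^3 H'|\cdot\Delta_3)^{st}=\Theta^*(|H'|^{st})=\Theta^*(m^{st})$, the resulting collection of $K_{s,t}^{(r)}$'s has size $\Omega^*(m^{st}n^{s+t-2st})$, as desired.

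The main obstacle is the shadow lower bound $|\partial^3 H'|\ge n^{3-3/s+1/st}(\log n)^{O(1)}$: neither the trivial bound $|H'|/n^{r-3}$ nor the Kruskal--Katona bound $|H'|^{3/r}$ meets this threshold when $r\ge 4$, even under the stated hypothesis on $m$. Overcoming this seems to require splitting into cases based on whether $H'$ is ``spread out'' (small $\Delta_3$, producing a directly large 3-shadow) or ``concentrated'' (large $\Delta_3$, producing a clique-like substructure on few vertices in which copies of $K_{s,t}^{(r)}$ can be counted directly and already exceed the target $m^{st}n^{s+t-2st}$), leveraging the simultaneous control on all intermediate shadow parameters $\Delta_k$ provided by the full regularization. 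Reconciling these regimes, as is done implicitly in the lifting machinery of \cite{nie2024random}, is the central technical task.
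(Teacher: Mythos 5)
The paper does not actually prove \Cref{prop:vanillaRUniform} --- it appears in the concluding remarks with the explicit disclaimer ``We claim without proof that one can easily lift this supersaturation result \dots using the methods from \cite{nie2024random}.'' So there is no paper argument for you to have matched; the question is whether your reconstruction would succeed.

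Your framework (regularize $H$ \`a la \Cref{lem:regularize}, lower-bound the $3$-shadow of the regularized $H'$, apply \Cref{prop:vanillaSupersaturation} to $\partial^3 H'$, then lift each $K_{s,t}^{(3)}$ to a $K_{s,t}^{(r)}$ by choosing, for each of its $st$ triples, one of the $\approx \Delta_3$ host $r$-edges, in the spirit of \Cref{lem:translate}) is the natural one and almost certainly what the authors intend. Your consistency check --- that plugging $M_3(n)=n^{3-3/s+1/st}$ into the threshold transformation of \Cref{lemma:BSviaSHADOW} reproduces exactly $n^{r-\frac{r-1}{2}(3/s-1/st)}$, and that $(|\partial^3 H'|\cdot\Delta_3)^{st}=\Theta^*(m^{st})$ so the final count is insensitive to how $m$ splits between shadow size and codegree --- is exactly right and is strong evidence the route is correct. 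You are also right that one cannot simply cite \Cref{lemma:BSviaSHADOW}: it propagates the $(M,\gamma,\tau)$-balanced property of \Cref{def:balanced}, which contains no lower bound on $|\mathcal{H}|$, so a vanilla counting analogue is needed and must be argued.

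The genuine gap is the one you flag yourself: establishing $|\partial^3 H'|\ge n^{3-3/s+1/st}(\log n)^{O(1)}$. Your observation that the trivial bound $m/n^{r-3}$ and Kruskal--Katona both fall short is correct --- Kruskal--Katona would suffice precisely when $r>s$, which the hypothesis $s\ge r$ excludes, so the regime of interest is exactly where this is hard. However, your proposed remedy for the concentrated regime (``large $\Delta_3$, producing a clique-like substructure on few vertices in which copies of $K_{s,t}^{(r)}$ can be counted directly'') is a hand-wave and is probably not the right picture: a uniformly large $\Delta_3$ on a regularized $H'$ does not localize to few vertices, and it is not evident that direct counting in that regime yields $\Omega^*(m^{st}n^{s+t-2st})$ --- the balancing identity $(m_3\Delta_3)^{st}=\Theta^*(m^{st})$ only helps \emph{after} \Cref{prop:vanillaSupersaturation} is applicable, which is precisely what fails there. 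The actual resolution in the \cite{nie2024random} style requires tracking the whole chain $\Delta_2\ge\Delta_3\ge\cdots\ge\Delta_r=1$ (equivalently $m_2\le m_3\le\cdots\le m_r=m$) and arguing by cases on where the size jumps occur; this is what your last paragraph gestures at, but you have not carried it out, and it is the real content of the ``easy'' lift. As a proof proposal this is a correct and well-diagnosed outline, but it is not a proof.
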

	The count of $\Omega^*(m^{st}n^{s+t-2st})$ here is tight whenever it applies.  Moreover, the bound on $m$ in \Cref{prop:vanillaRUniform} is best possible whenever $n^{r-1}$ achieves the maximum, which happens whenever $r\ge \frac{2st}{3t-1}+1\approx 2s/3$.  In particular, it is plausible that improvements to the bound of $m$ in \Cref{prop:vanillaRUniform} for smaller values of $r$ could lead towards improving our $r\approx 2s/3$ bound for random Tur\'an numbers of $K_{s,t}^{(r)}$ in \Cref{thm:Kst}. 
	
	With an eye towards optimizing \Cref{prop:vanillaRUniform} for smaller $r$, it is plausible that the true bound for $m$ for any value of $r$ should be
	\[m=\Om^*(\max\{n^{r-1},n^{r-{r\choose 2}/s}\}),\]
	as this is the Tur\'an number of $K_{s,t}^{(r)}$ whenever $t$ is sufficiently large in terms of $s$.  Our inability to close this gap on $m$ from \Cref{prop:vanillaRUniform} even for the case $K_{s,t}^{(3)}$ is closely related to the gap in the corresponding graph supersaturation problem stated in \cite{dubroff2023clique}, which asks to determine the number of copies of $K_{s,t}$ guaranteed to exist in a graph with a given number of triangles.
	
	We strongly believe that our methods used in this paper for $K_{s,t}^{(3)}$ can be used to make additional progress on this corresponding graph supersaturation problem, but decided not to complicate this paper any further by including this.   We note, however, that one can not use our main technical result \Cref{prop:sparse} as a blackbox towards this problem, as we required the graph used in this proposition to be the shadow of a nice hypergraph.  As we noted in its proof, this shadow hypothesis is only needed to prove \Cref{cl:H'}, which itself is only needed to prove a \textit{balanced} supersaturation result, so this in and of itself is not a significant obstacle to the problem.  There are a few other additional complications that arise when trying to shift our approach to the graph setting, though again we do not believe any of them should seriously impede our methods going through in this setting.

	\bibliographystyle{abbrv}
	\bibliography{refs}

\end{document}